\documentclass{article}
\usepackage[utf8]{inputenc}
\usepackage[english]{babel}
\usepackage{amsmath}
\usepackage{amsfonts}
\usepackage{amssymb}
\usepackage{amsthm}
\usepackage{color}
\usepackage{algpseudocode}
\usepackage{algorithm}

\usepackage{mathtools}

\usepackage{enumitem}

\usepackage{csquotes}

\usepackage[backend=biber, style=alphabetic,
]{biblatex}
\usepackage{graphicx}
\usepackage{lmodern}
\usepackage[left=2cm,right=2cm,top=2cm,bottom=2cm]{geometry}

\usepackage{amsthm}
\usepackage{aliascnt}
\usepackage{hyperref,etoolbox}
\usepackage[capitalize]{cleveref}

\hypersetup{
    hidelinks,
    pdftitle={On the Universality of Simple Trust-Region Algorithms},
    pdfauthor={Clemens Sirotenko}
}

\theoremstyle{plain}
\newtheorem{theorem}{Theorem}

\newaliascnt{lemma}{theorem}
\newtheorem{lemma}[lemma]{Lemma}
\aliascntresetthe{lemma}

\newaliascnt{proposition}{theorem}

\aliascntresetthe{proposition}

\newaliascnt{corollary}{theorem}
\newtheorem{corollary}[corollary]{Corollary}
\aliascntresetthe{corollary}

\theoremstyle{definition}
\newtheorem{assumption}{Assumption}

\theoremstyle{remark}
\newtheorem{remark}{Remark}

\Crefname{theorem}{Theorem}{Theorems}
\Crefname{lemma}{Lemma}{Lemmas}
\Crefname{proposition}{Proposition}{Propositions}
\Crefname{corollary}{Corollary}{Corollaries}
\Crefname{assumption}{Assumption}{Assumptions}
\Crefname{remark}{Remark}{Remarks}

\DeclareMathOperator*{\argmin}{arg\,min \;}

\newcommand*\Diff[1]{\mathop{}\!\mathrm{D}}

\usepackage{xcolor}
\usepackage{todonotes}

\newcounter{algvariant}[algorithm]
\renewcommand{\thealgvariant}{\Roman{algvariant}}

\crefname{algvariant}{variant}{variants}
\Crefname{algvariant}{Variant}{Variants}

\newcommand{\algvariantitem}[2]{%
  \refstepcounter{algvariant}%
  \item[\textbf{Variant~\thealgvariant: #1}]\label{#2}%
}

\newcommand{\Crefalgvariant}[2]{\Cref{#1}}

\usepackage{booktabs}
\usepackage{tabularx}
\usepackage{array}

\usepackage{threeparttable} 

\title{On the Universality of Simple Trust-Region Algorithms}

\author{
Clemens Sirotenko\thanks{Weierstrass Institute for Applied Analysis and Stochastics (WIAS), Berlin, Germany. Email: \texttt{sirotenko@wias-berlin.de}}
}

\begin{document}
\maketitle

\begin{abstract}
We establish universal complexity guarantees for quadratic trust-region methods and identify a common mechanism underlying their universal behavior under convexity, based on a function-gap model-decrease estimate that appears to be new in the trust-region literature. First, we prove that the basic trust-region method with inexact subproblem solves is universal under convexity. Under a $\nu$-H\"older-continuous Hessian, it attains the global complexity bound $\mathcal{O}(\varepsilon^{-1/(1+\nu)})$ for computing an $\varepsilon$-approximate minimizer, without knowledge of $\nu\in[0,1]$ or the corresponding H\"older constant. In the nonconvex
regime, the method retains the classical
$\mathcal{O}(\varepsilon^{-2})$ first-order complexity bound under the usual additional bounded-Hessian assumption. With suitably vanishing inexactness, it also recovers Q-superlinear local convergence for
$\nu=0$ and convergence of order $1+\nu$ for $\nu\in(0,1]$. Second, we show that the same convex-universal mechanism applies to a trust-region variant with exact subproblem solves and a simple modification of the acceptance ratio. This variant is universal simultaneously in the nonconvex, convex, and local regimes: it attains the optimal nonconvex
first-order complexity $\mathcal{O}(\varepsilon^{-(2+\nu)/(1+\nu)})$, while preserving the universal convex complexity and the local Newton rates. These guarantees
require no knowledge of $\nu$ or its H\"older constant. Both methods use the usual quadratic trust-region model and the classical
radius-update mechanism, without gradient-dependent radii or model modifications such as cubic, gradient, or tensor regularization.  The results show that the trust-region mechanism is inherently adaptive across nonconvex, convex, and locally strongly convex regimes, providing further theoretical support for the practical success of trust-region methods.
\end{abstract}

\section{Introduction}

We consider unconstrained optimization problems of the form
\begin{equation}
    \min_{x\in H} f(x) \tag{$\mathrm{P}$},
    \label{eq:main_problem}
\end{equation}
where $H$ is a finite-dimensional real Hilbert space; in particular, one may take $H=\mathbb{R}^d$, $d\geq 1$. We assume that $f$ is bounded below and that its set of minimizers is nonempty, and define
\begin{equation*}
    X_\star:=\arg\min_{x\in H} f(x),
    \qquad
    f_\star:=\min_{x\in H} f(x).
\end{equation*}
We focus on twice continuously differentiable functions $f:H\to\mathbb{R}$ with H\"older-continuous Hessians. The main goal of this paper is to derive universal global complexity guarantees in the nonconvex and convex
regimes, together with universal local convergence rates near nondegenerate minimizers, for simple trust-region algorithms \cite{conn2000trust}.

In this article, we call a method \emph{universal} in a given regime if a single algorithm, without using the H\"older exponent $\nu$ or constant $L_\nu$ as algorithmic inputs, attains for every $\nu\in[0,1]$ the corresponding H\"older-dependent rate in that regime. Under convexity,
this is the global complexity bound
\[
    \mathcal{O}\bigl(\varepsilon^{-1/(1+\nu)}\bigr)
\]
for computing an $\varepsilon$-approximate minimizer. In the nonconvex regime, it is the optimal (see \cite{CartisGouldToint2018}) first-order complexity bound
\[
    \mathcal{O}\bigl(\varepsilon^{-(2+\nu)/(1+\nu)}\bigr).
\]
In the local regime, we speak of \emph{local universality} when the method automatically recovers the classical behavior of Newton's method: Q-superlinear convergence for $\nu=0$ and convergence of order $1+\nu$
for $\nu\in(0,1]$. Accordingly, we call a method \emph{universal across all regimes} if the same algorithm attains these H\"older-dependent guarantees in the nonconvex, convex, and local regimes without knowledge of $\nu$ or $L_\nu$.

The main message of this paper is that such universality is already largely inherent in simple trust-region mechanisms. We first show that the classical ratio-test trust-region method with inexact subproblem solves is universal under convexity and locally universal near nondegenerate minimizers. We then show that a simple, recently proposed modification of the acceptance ratio yields a trust-region method that is universal across all three regimes.

\paragraph{Related literature}

It is well known that trust-region methods have strong convergence properties in
the nonconvex regime, since their subproblems naturally accommodate indefinite
Hessian approximations and negative curvature. Complexity guarantees in this
setting for variants of the classical trust-region method have been available for
some time; see \cite{conn2000trust} for a classical account and
\cite{cartis2022evaluation} for a modern monograph. 

By contrast, the complexity
of trust-region methods under convexity has received considerably less attention.
The work \cite{grapiglia2016worst} analyzes the convex case and establishes the
standard $\mathcal{O}(\varepsilon^{-1})$ iteration complexity bound under
Lipschitz continuity of the gradient. Its analysis is related to
\cite{cartis2012evaluation}, where an adaptive cubic Newton-type method is studied
under convexity and the faster complexity bound
$\mathcal{O}(\varepsilon^{-1/2})$ is obtained under additional Lipschitz continuity of the
Hessian. The latter result is a major inspiration for this article. The recent
paper \cite{DiouaneHabiboullahOrban2026UnboundedTR}, although primarily concerned
with unbounded Hessian approximations, also recovers the
$\mathcal{O}(\varepsilon^{-1})$ convex complexity bound as a special case.

 The closest direct point of comparison to the present article is
\cite{jiang2026beyond}. The authors state that their work is the first to prove a
global iteration complexity bound of $\mathcal{O}(\varepsilon^{-1/2})$ under
convexity and Lipschitz continuity of the Hessian for a trust-region variant,
while also obtaining an almost optimal $\mathcal{O}(\varepsilon^{-3/2}\log(1/\varepsilon))$ iteration complexity for computing a \emph{second-order stationary} point and quadratic local convergence under local strong convexity. Their method uses exact subproblem solves and
combines a gradient-regularized quadratic model with a trust-region constraint
whose radius is scaled by the gradient norm. The update mechanism for the radius
and the gradient-regularization parameter differs substantially from the
classical ratio-test trust-region update. Moreover, the analysis is restricted
to Lipschitz-continuous Hessians and does not provide complexity guarantees for
the full H\"older scale.

The recent work \cite{jiang2025accelerating} builds on a similar trust-region mechanism to that in \cite{jiang2026beyond} and establishes accelerated complexity bounds under convexity and Lipschitz continuity of the Hessian, while preserving fast local convergence through a procedure that detects entry into the local convergence regime. However, this method requires knowledge of the problem constants.

Outside the trust-region literature, a number of regularized Newton methods have
been developed in recent years. This line goes back to
\cite{nesterov2006cubic}, where a cubic regularization term is used to obtain the
global complexity bound $\mathcal{O}(\varepsilon^{-1/2})$ under convexity and
Lipschitz continuity of the Hessian. Since the resulting cubic model may be more
difficult to handle in practice, alternative model functions based on gradient
regularization have been proposed; see, for example,
\cite{mishchenko2023regularized,doikov2024gradient}. In these methods, the
quadratic model is regularized by a term depending on the gradient norm, for
instance by replacing the Hessian with
\[
    \nabla^2 f(x_k)+c\|\nabla f(x_k)\|^\alpha I,
\]
for suitable parameters $c>0$ and $\alpha\geq 0$. These approaches also attain
the global complexity bound $\mathcal{O}(\varepsilon^{-1/2})$ under convexity. Related results
using gradient regularization have been developed for self-concordant functions
\cite{doikov2025minimizing}, distributed optimization \cite{zhang2024communication}, quasi-Newton approximations \cite{wang2024global}, and nonsmooth problems
\cite{alphonse2025leap,alphonse2026skip}.

In recent years, there has also been growing interest in universal Newton-type algorithms whose convergence guarantees adapt to the H\"older regularity of the Hessian without prior knowledge of the corresponding regularity parameters. An early universal method based on cubic-type regularization for functions with H\"older-continuous Hessians was presented in \cite{grapiglia2017regularized}. Subsequent developments include methods allowing inexact subproblem solves and using a trust-region-type update of the regularization parameter \cite{cartis2019universal}, as well as higher-order extensions \cite{grapiglia2020tensor}.
Universality of gradient-regularized methods under
H\"older-continuous Hessians was investigated recently in
\cite{doikov2024super}. That work proves that a variant of the gradient-regularized Newton method with exact subproblem solves is universal
in the convex regime over the full scale $\nu\in[0,1]$. It also establishes superlinear local convergence, although with a local order slightly weaker than that of the classical Newton method; see \Cref{tab:method_comparison}. Nonconvex complexity guarantees are not
provided there. More recently, gradient-regularized Newton methods have also been studied in the nonconvex regime; see
\cite{zhou2026regularized,zeng2026adaptive}. These works establish optimal or H\"older-adaptive nonconvex complexity guarantees using, among other ingredients, the subproblem solver proposed in \cite{royer2020newton}. Corresponding universal convex complexity guarantees are not established in these works.

The universal methods discussed in the preceding paragraphs are regularized Newton methods. As a first step toward universal trust-region methods, we mention the contracting-domain and contracting-point Newton methods \cite{doikov2020convex,DoikovNesterov2023AffineInvariantContractingPoint}. These methods attain the universal convex rate
$\mathcal{O}(\varepsilon^{-1/(1+\nu)})$ under a $\nu$-H\"older-continuous Hessian. In special cases, their subproblems admit a trust-region interpretation. However,  unlike the classical ratio-test trust-region method considered here, these schemes use prescribed contraction coefficients rather than acceptance ratios and adaptive radius updates. Moreover, universal nonconvex complexity guarantees and the corresponding locally universal Newton rates have not been established for these methods.

\begin{table}[t!]
\centering
\footnotesize
\setlength{\tabcolsep}{4pt}
\renewcommand{\arraystretch}{1.25}
\begin{threeparttable}
\begin{tabularx}{\textwidth}{@{}
>{\raggedright\arraybackslash}p{0.24\textwidth}
>{\centering\arraybackslash}p{0.145\textwidth}
>{\centering\arraybackslash}p{0.145\textwidth}
>{\centering\arraybackslash}p{0.16\textwidth}
>{\raggedright\arraybackslash}X
@{}}
\toprule
\textbf{Method}
&
\textbf{Convex}
&
\textbf{Nonconvex }
&
\textbf{Local \tnote{1}}
&
\textbf{Algorithmic features}
\\
\midrule
\addlinespace[0.7em]
\textbf{Regularized Newton Method}
\newline
{\scriptsize Grapiglia--Nesterov~\cite{grapiglia2017regularized}}
&
\(\mathcal{O}\!\left(\varepsilon^{-\frac{1}{1+\nu}}\right)\)
&
\(\mathcal{O}(\varepsilon^{-(2 + \nu)/(1+\nu)})\)
&
{\scriptsize not established}
&
Adaptive cubic Newton method with backtracking search over regularization parameter; not a trust-region method
\\
\addlinespace[0.7em]

\textbf{Adaptive cubic regularization (ARC)}
\newline
{\scriptsize Cartis--Gould--Toint~\cite{cartis2011adaptive2,cartis2012evaluation}}
&
\(\mathcal{O}(\varepsilon^{-1/2})\)
\newline
{\scriptsize for \(\nu=1\)}
&
\(\mathcal{O}(\varepsilon^{-3/2})\)
\newline
{\scriptsize for \(\nu=1\)}
&
Quadratic
\newline
{\scriptsize for \(\nu=1\)}
&
Adaptive cubic Newton method with ratio test and adaptive
regularization parameter; not a trust-region method
\\
\addlinespace[0.7em]

\textbf{Super-universal Newton}
\newline
{\scriptsize Doikov et al.~\cite{doikov2024super}}
&
\(\mathcal{O}\!\left(\varepsilon^{-\frac{1}{1+\nu}}\right)\)
&

{\scriptsize not established}
&
Superlinear
\newline
{\scriptsize order \(1+\frac{\nu}{1+\nu}\)}
&
Gradient-regularized Newton method; not a trust-region method
\\
\addlinespace[0.7em]

\textbf{Universal trust-region}
\newline
{\scriptsize Jiang et al.~\cite{jiang2026beyond}}
&
\(\mathcal{O}(\varepsilon^{-1/2})\)
\newline
{\scriptsize for \(\nu=1\)}
&
$\mathcal{O}(\varepsilon^{-3/2}\log(1/\varepsilon))$\,\tnote{2} 
\newline
{\scriptsize for \(\nu=1\)}
&
Quadratic
\newline
{\scriptsize for \(\nu=1\)}
&
Exact trust-region method with weighted radius, regularization, and
nonstandard radius updates
\\
\addlinespace[0.7em]

\textbf{Contracting trust-region}
\newline
{\scriptsize Doikov, Nesterov%
~\cite{nesterov2018complexity,doikov2020convex}}
&
\(\mathcal{O}\!\left(\varepsilon^{-\frac{1}{1+\nu}}\right)\)
&
{\scriptsize not established}
&
Linear\tnote{3}
&
Contracting-domain Newton scheme with prescribed contraction
coefficient; admits a trust-region interpretation for indicator constraints
\\
\addlinespace[0.7em]
\midrule
\addlinespace[0.7em]
\textbf{Inexact basic trust-region}
\newline
{\scriptsize This work and \cite{conn2000trust}}
&
\(\mathcal{O}\!\left(\varepsilon^{-\frac{1}{1+\nu}}\right)\)
&
\(\mathcal{O}(\varepsilon^{-2})\) \tnote{4}
&
Superlinear
\newline
{\scriptsize order \(1+\nu\)}
&
Classical trust-region framework with inexact subproblem solves
\\
\addlinespace[0.7em]
\textbf{Exact UniCAT method}
\newline
{\scriptsize This work and \cite{hamad2024simple}}
&
\(\mathcal{O}\!\left(\varepsilon^{-\frac{1}{1+\nu}}\right)\)
&
\(\mathcal{O}(\varepsilon^{-(2 + \nu)/(1+\nu)})\)
&
Superlinear
\newline
{\scriptsize order \(1+\nu\)}
&
Trust-region framework with slightly modified acceptance ratio test and exact subproblem solves.\\
\bottomrule
\end{tabularx}
\begin{tablenotes}
\footnotesize
\item[1] Local rates are stated under local strong convexity or nonsingularity
of the Hessian at the limit point.
\item[2] The authors in \cite{jiang2026beyond} prove convergence to a second-order stationary point; we consider only first-order stationarity.
\item[3] The work \cite{doikov2020convex} establishes global linear convergence under global strong convexity and knowledge of problem constants.
\item[4] This complexity bound is derived under additional uniform boundedness of the Hessian, see \cite[Theorem~3.2.1]{cartis2022evaluation} and \Cref{thm:tr-nonconvex-complexity}. 
\end{tablenotes}

\end{threeparttable}
\caption{Comparison of global and local complexity guarantees for recent
Newton-type and trust-region methods under Hölder-continuous Hessians.} 
\label{tab:method_comparison}
\end{table}

\paragraph{Literature gap}
Despite the long history and widespread use of trust-region methods, their complexity analysis under convexity, particularly under additional regularity
of the Hessian, remains underdeveloped. To the best of our knowledge, apart from \cite{jiang2026beyond} and the follow-up work \cite{jiang2025accelerating}, no trust-region method has been shown to attain the Hessian-based convex complexity bound $\mathcal{O}(\varepsilon^{-1/2})$. In particular, no classical ratio-test
trust-region method has been shown to attain the universal convex complexity $\mathcal{O}(\varepsilon^{-1/(1+\nu)})$ over the full H\"older scale. More
broadly, no simple trust-region method based on the usual quadratic model and
an adaptive acceptance-ratio and radius-update mechanism has been shown to be
universal simultaneously in the nonconvex, convex, and local regimes.

\paragraph{Contributions}
This paper closes the gap above and identifies universal behavior in simple trust-region methods across the nonconvex, convex, and local regimes.

First, we prove that the basic inexact trust-region method with the standard
ratio test is universal under convexity. Specifically, for every $\nu\in[0,1]$, the method attains the global iteration complexity
\begin{equation}
    \mathcal{O}\bigl(\varepsilon^{-1/(1+\nu)}\bigr)
\end{equation}
for computing an $\varepsilon$-approximate minimizer under a
$\nu$-H\"older-continuous Hessian. The algorithm does not require knowledge of $\nu$ or $L_\nu$, and no $\nu$-dependent parameter tuning is used. Under
the usual additional bounded-Hessian assumption, the method retains the classical $\mathcal{O}(\varepsilon^{-2})$ complexity for computing an
$\varepsilon$-first-order stationary point in the nonconvex regime. Moreover, under a suitably vanishing inexactness condition, it recovers the
locally universal Newton behavior: Q-superlinear convergence for $\nu=0$ and convergence of order $1+\nu$ for $\nu\in(0,1]$.

In contrast to \cite{jiang2026beyond}, our convex result applies directly to the basic ratio-test trust-region framework with the usual quadratic model and allows inexact subproblem solves. The method in
\cite{jiang2026beyond} uses exact solutions of gradient-regularized trust-region subproblems, together with gradient-dependent radii and a nonstandard update mechanism. Moreover, while \cite{jiang2026beyond} treats the Lipschitz-Hessian endpoint $\nu=1$, our
analysis covers the complete scale $\nu\in[0,1]$.

Second, we prove that the simple modification of the acceptance ratio proposed in \cite{hamad2024simple} produces a trust-region method that is universal across all three regimes. In the nonconvex setting, the method
attains the optimal first-order iteration complexity
\begin{equation}
    \mathcal{O}\bigl(\varepsilon^{-(2+\nu)/(1+\nu)}\bigr).
\end{equation}
At the same time, it preserves the universal convex complexity $\mathcal{O}(\varepsilon^{-1/(1+\nu)})$ and the locally universal Newton rate. Thus, universality in the nonconvex regime is obtained by modifying
only the acceptance ratio, without changing the quadratic trust-region model or introducing gradient, cubic, or tensor regularization.

For this second variant, we assume exact trust-region subproblem solves. The work \cite{hamad2024simple} permits inexact subproblem solves and establishes the optimal nonconvex complexity at the Lipschitz endpoint
$\nu=1$. Our contribution is to extend the complexity analysis to the full H\"older scale $\nu\in[0,1]$ and, simultaneously, to establish universal
convex complexity and locally universal convergence. 

Thus, this second variant closes the three-regime universality gap identified above for a single trust-region algorithm.

An overview of the theoretical results established in this article is given in \Cref{tab:method_comparison}.

\paragraph{Limitations}
The present approach has some limitations.

First, our analysis of the UniCAT method, 
\Crefalgvariant{var:variant2}{} of \Cref{alg:abstract_tr_algorithm} relies
on exact subproblem solves. Although this assumption simplifies the
analysis and provides a clear view of the underlying mathematical ideas, a
more practical treatment incorporating inexact subproblem solves would be
highly desirable.

Second, in contrast to the super-universal Newton method proposed in
\cite{doikov2024super}, our framework is restricted to derivative
information of at most second order. Extending the framework to incorporate
third- or higher-order derivative information, while establishing the
corresponding adaptivity guarantees, would be an interesting direction for
future research.

Third, although \cite{jiang2026beyond} also relies on exact subproblem
solves, the authors establish nonconvex convergence guarantees for
second-order stationary points, whereas our analysis considers only
first-order stationarity. While our first-order complexity bounds avoid the
additional logarithmic factors appearing in \cite{jiang2026beyond}, an
extension of our analysis to second-order stationary points would also be
of interest.

\section{Algorithmic framework}\label{sec:algorithmic_framework}

We now introduce the parameterized trust-region framework studied in this
paper; see \Cref{alg:abstract_tr_algorithm}. It is based on the classical
basic trust-region method described, for instance, in
\cite{conn2000trust,cartis2022evaluation}, with an optional modification of the acceptance ratio proposed in \cite{hamad2022consistently,hamad2024simple}.

For a given iterate $x_k \in H$, $k \in \mathbb N$, we write
\begin{equation*}
  g_k := \nabla f(x_k),  \qquad H_k := \nabla^2 f(x_k),
\end{equation*}
and consider the quadratic model
\begin{equation}
 m_k(s) := f(x_k) + \langle g_k, s \rangle
 + \tfrac{1}{2} \langle H_k s,s\rangle .
 \label{eq:def_model_problem_k}
\end{equation}
Trust-region methods seek, at every iteration, approximate minimizers of
$m_k$ subject to $\|s\| \leq \Delta_k$, where $\Delta_k > 0$ is a
trust-region radius that is updated after every iteration. The classical
trust-region method accepts or rejects the trial step $s_k$ based on the ratio
\begin{equation}
         \rho_k :=
         \frac{f(x_k) - f(x_k+s_k)}
         {m_k(0)- m_k(s_k)} .
         \label{eq:standard_rho_def}
\end{equation}
Following the classical trust-region literature, at iteration $k$ we refer to $a_k(s)$ as the \emph{actual decrease} and to $p_k(s)$ as the \emph{predicted decrease}. These quantities are defined by
\begin{equation}
a_k(s) := f(x_k) - f(x_k+s), \qquad p_k(s) := m_k(0) - m_k(s). \label{eq:def_actual_decrease_predicted_decrease}
\end{equation}
Based on \cite{hamad2024simple}, we modify the ratio \eqref{eq:standard_rho_def} by adding the term
\[
\frac{\vartheta}{2}
\min \left \{ \|\nabla f(x_k)\|,\|\nabla f(x_k+s_k)\|\right\}\|s_k\|,
\qquad \vartheta \geq 0,
\]
to the denominator. This enforces a larger function decrease for large steps;
cf. \cite{hamad2024simple}. The resulting modified acceptance ratio is given in
\eqref{eq:rho_cat_def}.

The authors of \cite{hamad2024simple} prove that an inexact variant of their
algorithm, equipped with the modified ratio, achieves an iteration complexity of $\mathcal{O}(\varepsilon^{-3/2})$ under Lipschitz continuity of the Hessian for computing a first-order stationary point. This improves upon the classical trust-region algorithm, which is known to achieve only a worst-case complexity
of $\mathcal{O}(\varepsilon^{-2})$ under the same assumptions; cf. \cite{cartis2022evaluation}. While \cite{curtis2017trust} was the first work to achieve a complexity bound of $\mathcal{O}(\varepsilon^{-3/2})$ for a modified trust-region variant, the modification considered here and in \cite{hamad2022consistently} is a significant simplification compared to the approach in \cite{curtis2017trust} and improves the dependence on the Lipschitz constant $L_1>0$ of the Hessian from $L_1^{3/2}$ to $L_1^{1/2}$.

The resulting parameterized trust-region framework is stated in
\Cref{alg:abstract_tr_algorithm}. For $\vartheta=0$, it reduces to the
classical ratio-test trust-region method. For $\vartheta>0$, it uses the
modified acceptance ratio proposed in \cite{hamad2022consistently,hamad2024simple}; we reserve the name
\emph{UniCAT} for this modified-ratio specialization.

\begin{algorithm}[h!]
\caption{Parameterized quadratic trust-region framework} 
\label{alg:abstract_tr_algorithm}
\begin{algorithmic}[1]
\Require Initial point $x_0 \in H$, initial radius $\Delta_0 > 0$,
parameter $\vartheta\geq 0$,
\[
0<\eta_1\leq \eta_2<1,
\qquad
0<\gamma_1\leq \gamma_2<1 \leq \gamma_3.
\]
\For{$k = 0,1,2,\ldots$}
    \State If $\|g_k\|=0$, terminate and return $x_k$.
    \State Otherwise, compute a trust-region step $s_k \in H$, with $\|s_k\|\leq \Delta_k$ according to the chosen version, that is,
    \begin{align}
    s_k \approx
    \argmin_{\|s\|\leq \Delta_k} m_k(s),
    \label{eq:abstract_s_k_update}
    \end{align}
    \hspace{0.42cm} where the meaning of $\approx$ is specified in \Crefalgvariant{var:variant1}{} and \Crefalgvariant{var:variant2}{}.
\State Set $x_{k,+} := x_k + s_k$ and update the trust-region acceptance ratio
 \begin{equation}
\rho_k^\vartheta :=
\frac{f(x_k) - f(x_{k,+})}
{m_k(0)- m_k(s_k)
+ \frac{\vartheta}{2}
\min\{ \|\nabla f(x_k)\|,\|\nabla f(x_{k,+})\|\} \|s_k\|} .
\label{eq:rho_cat_def}
\end{equation}

    \State Update the iterate:
    \begin{equation}
        x_{k+1} =
        \begin{cases}
            x_k + s_k, & \text{if } \rho_k^\vartheta \geq \eta_1, \\[2mm]
            x_k,       & \text{otherwise.}
        \end{cases}
    \end{equation}

    \State Update the trust-region radius:
    \begin{equation}
    \Delta_{k+1} \in
   \begin{cases}
[\gamma_1 \Delta_k,\gamma_2 \Delta_k],
& \rho_k^\vartheta < \eta_1, \\[0.4em]
[\gamma_2 \Delta_k, \Delta_k],
& \eta_1 \leq \rho_k^\vartheta < \eta_2, \\[0.4em]
[\Delta_k,\gamma_3 \Delta_k],
& \rho_k^\vartheta \geq \eta_2 .
\end{cases}
\end{equation}

\EndFor
\end{algorithmic}
\end{algorithm}
\paragraph{Two variants of \Cref{alg:abstract_tr_algorithm}.}
We study two variants of \Cref{alg:abstract_tr_algorithm}. They differ in the
choice of the parameter $\vartheta \geq 0$ and consequently in the definition of the acceptance ratio
\eqref{eq:rho_cat_def} and in the accuracy with which the trust-region
subproblem \eqref{eq:abstract_s_k_update} is solved.

\begin{description}[leftmargin=0pt,labelindent=0pt,labelsep=0.75em]

\algvariantitem{Basic inexact trust-region method}{var:variant1}
In this version, we choose $\vartheta=0$. Hence $\rho_k^\vartheta$ reduces to
the standard ratio of actual to predicted reduction. Given a sequence of
inexactness parameters $\tau_k \in [0,\bar{\tau})$ and a given upper bound $0<\bar \tau$, the step $s_k$ is required to satisfy
\begin{align}
    \|s_k\| &\leq \Delta_k, \label{eq:eq_alg_feasibility}\\
    m_k(0)-m_k(s_k)
    &\geq
    \frac{1}{1+\tau_k}
    \bigl(
        m_k(0)-m_k(s_k^\star)
    \bigr),
    \qquad
    s_k^\star \in
    \argmin_{\|s\|\leq \Delta_k} m_k(s).
    \label{eq:fractional_model_descent}
\end{align}

\algvariantitem{Exact universal consistently adaptive trust-region (UniCAT) method}{var:variant2}
In this version, inspired by the modification proposed in
\cite{hamad2024simple}, we choose $\vartheta>0$ and solve the trust-region subproblem exactly, that is,
\begin{equation}
    s_k \in
    \argmin_{\|s\|\leq \Delta_k} m_k(s).
    \label{eq:exact_tr_subproblem}
\end{equation}
For the full nonconvex complexity analysis of this variant, we assume in addition that
\[
    0 < \eta_1 \leq \eta_2 < \frac{1}{1+\vartheta}.
\]
This condition can always be ensured by choosing the acceptance thresholds
sufficiently small. For the convex and local convergence results, the weaker condition
\[
    0 < \eta_1 \leq \eta_2 < \frac{1}{1+\vartheta/2}
\]
is sufficient.
\end{description}

\begin{remark}[On the inexactness of the algorithm]
The inexactness criterion for \Crefalgvariant{var:variant1}{} of
\Cref{alg:abstract_tr_algorithm}, namely \eqref{eq:fractional_model_descent},
may look somewhat abstract at first sight. However, it simply requires the computed step to achieve a fixed fraction of the optimal trust-region model reduction, and is closely related to standard requirements used in trust-region methods. For instance, the celebrated method of Mor\'e and Sorensen \cite{more1983computing} computes nearly exact solutions of the quadratic
trust-region subproblem and can be stopped once a suitable subproblem accuracy has been reached. Modern variants and implementations of this approach are also
available; see, for example, \cite{gould2010solving}. Since the precise choice of such a solver is not relevant for our analysis, we do not discuss these variants further here. We note, however, that \eqref{eq:fractional_model_descent} is stronger than,
for instance, the standard Cauchy-decrease condition
\eqref{eq:standard_cauchy_decrease}, which is commonly used as a first-order inexactness criterion and can be satisfied by fast CG-type methods such as the
Steihaug--Toint truncated conjugate-gradient method
\cite{toint1981sparsity,steihaug1983conjugate,conn2000trust}. 

Regarding \Crefalgvariant{var:variant2}{} of
\Cref{alg:abstract_tr_algorithm}, we note that \cite{hamad2024simple} presents a rather involved inexactness criterion combined with the same acceptance ratio \eqref{eq:rho_cat_def}. A simple inexactness condition of the form \eqref{eq:fractional_model_descent} does not seem sufficient to obtain similar guarantees. However, we believe that a similar inexactness framework as the one of \cite{hamad2024simple} can be combined with the present setting.
\end{remark}

\section{Assumptions and preliminaries}\label{sec:assumptions}
We collect several assumptions and preliminary facts that will be used throughout the analysis of \Cref{alg:abstract_tr_algorithm}.
\subsection{Standing assumptions}
Let us first state the assumptions necessary for our analysis.
\begin{assumption}[Nonconvex regime]\label{ass:nonconvex_assumptions} We assume that the following conditions hold:
\begin{itemize}
\item[A1)] The objective  $f:H\to\mathbb R$ is twice continuously differentiable, bounded below and with a nonempty set of minimizers $X_\star$,
\item[A2)] The Hessian $\nabla^2 f$ is globally $(L_\nu,\nu)$-Hölder continuous for some
$L_\nu>0$ and $\nu\in[0,1]$, that is, 
\begin{equation*}
\|\nabla^2 f(x)-\nabla^2 f(y)\|
\le L_\nu\|x-y\|^\nu
\qquad \forall x,y\in H.    
\end{equation*}
\end{itemize}
\end{assumption}
For \Cref{thm:tr-nonconvex-complexity} we additionally need that the Hessians are uniformly bounded.
\begin{assumption}[Boundedness of the Hessians]\label{ass:bounded_hessian}
We assume that
\begin{itemize}
    \item[A3)]  the Hessians are uniformly bounded, i.e. 
\begin{equation}
    M_H := \sup_{x \in H} \|\nabla^2 f(x) \|_{\mathrm{op}} < + \infty.  \label{eq:bounded_hessian}
\end{equation}
\end{itemize}
    
\end{assumption}
\noindent
If \Cref{ass:nonconvex_assumptions} holds with $\nu =0$ then \Cref{ass:bounded_hessian} holds automatically.

Also note that under \Cref{ass:bounded_hessian} by mean value theorem we may also deduce the global Lipschitz continuity of $\nabla f$:
\begin{equation}
    \| \nabla f(x) - \nabla f(y)\| \leq M_H \|x-y\| \qquad \text{for $x,y \in H.$} \label{eq:Lip_cont_grad_f}
\end{equation}
In addition to the nonconvex regime, we will also investigate iteration complexity if the function $f$ happens to be convex. For this setting we make the following assumptions:
\begin{assumption}[Convex regime]\label{ass:convex_assumptions}
Let \Cref{ass:nonconvex_assumptions} hold. In addition, assume that
\begin{itemize}
    \item[A4)] $f:H\to\mathbb R$ is convex.
    \item[A5)] The initial distance to the solution set satisfies
\begin{equation}
    D:=\sup\left\{ \operatorname{dist}(x,X_\star):x \in H  \text{ such that } f(x)\leq f(x_0)\right\}<\infty. \label{eq:assumption_sublevel}
\end{equation}
\end{itemize}
\end{assumption}
\noindent
Also we define the function gap: 
\begin{equation}
    F_k := f(x_k) - f_\star, \label{eq:def_function_gap}
\end{equation}
which will serve as the stationarity measure in the section on the convex regime.
\subsection{Model approximation and trust-region subproblems}
We start with a general approximation theorem which is classical in optimization
\begin{lemma}[Model approximation]\label{lem:model_approximation}
Let \Cref{ass:nonconvex_assumptions} hold. Also let $m_k$ denote the model function introduced in \eqref{eq:def_model_problem_k} and let $(L_\nu,\nu) \in (0,\infty) \times [0,1]$ be the global H\"older constant  of $\nabla^2 f$. Then one has the remainder estimates
\begin{align}
|f(x_k+s)-m_k(s)|
&\leq
\frac{L_{\nu}}{(1+\nu)(2+\nu)}\|s\|^{2+\nu}
\qquad &\forall s\in H, \label{eq:eq_model_approx_func}\\
\|\nabla f(x_k + s) - \nabla f(x_k) - \nabla^2f(x_k)[s]\| 
&\leq \frac{L_\nu}{1+\nu} \|s\|^{1+\nu} \qquad &\forall s\in H.\label{eq:eq_model_approx_grad}
\end{align}
\end{lemma}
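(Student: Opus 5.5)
Both estimates follow from Taylor's formula with integral remainder along the segment $t\mapsto x_k+ts$, $t\in[0,1]$, combined with the H\"older condition A2). I would prove the gradient estimate \eqref{eq:eq_model_approx_grad} first and then obtain \eqref{eq:eq_model_approx_func} by integrating it once more.

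For the gradient estimate, fix $s\in H$. Since $f$ is $C^2$, the fundamental theorem of calculus applied to $t\mapsto\nabla f(x_k+ts)$ gives
\begin{equation*}
\nabla f(x_k+s)-\nabla f(x_k)-\nabla^2 f(x_k)[s]=\int_0^1\bigl(\nabla^2 f(x_k+ts)-\nabla^2 f(x_k)\bigr)[s]\,dt .
\end{equation*}
Taking norms, using the operator-norm bound, and applying A2) with $\|x_k+ts-x_k\|=t\|s\|$ bounds the integrand by $L_\nu t^\nu\|s\|^{1+\nu}$. Integrating with $\int_0^1 t^\nu\,dt=1/(1+\nu)$ yields \eqref{eq:eq_model_approx_grad}.

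For the function estimate, write $\varphi(t):=f(x_k+ts)-m_k(ts)$. Then $\varphi(0)=0$ and
\begin{equation*}
\varphi'(t)=\langle \nabla f(x_k+ts)-g_k-tH_k s,\,s\rangle .
\end{equation*}
Applying the gradient estimate with $ts$ in place of $s$ and using Cauchy--Schwarz gives
\begin{equation*}
|\varphi'(t)|\le \frac{L_\nu}{1+\nu}\,t^{1+\nu}\|s\|^{2+\nu}.
\end{equation*}
Integrating over $[0,1]$ and using $\int_0^1 t^{1+\nu}\,dt=1/(2+\nu)$ gives \eqref{eq:eq_model_approx_func}.

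No step is a genuine obstacle. The only care needed is to keep track of the powers of $t$, so that the sharp constants $1/(1+\nu)$ and $1/((1+\nu)(2+\nu))$ appear, and to note that the case $\nu=0$ is covered by the same computation.
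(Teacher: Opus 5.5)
Your proof is correct and is essentially the paper's approach: the paper just invokes the standard second-order Taylor remainder argument, citing Grapiglia--Nesterov, and you have written that argument out. You derive the gradient bound from the integral form of $\nabla f(x_k+s)-\nabla f(x_k)$, then integrate it once more along $t\mapsto x_k+ts$, and both constants $1/(1+\nu)$ and $1/((1+\nu)(2+\nu))$ come out correctly.
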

\begin{proof}
    The proof is standard and follows directly from the second-order Taylor remainder; see \cite[Eq.~(2.7)]{grapiglia2017regularized}.
\end{proof}
For the subsequent analysis we will often need the following quantities
\begin{equation}
    C_f:=\frac{L_\nu}{(1+\nu)(2+\nu)},
    \qquad
    C_g:=\frac{L_\nu}{1+\nu}, \qquad \alpha   := \frac{1}{1 + \nu}.\label{eq:def_C_f_C_g_alpha}
\end{equation}

 We also need to characterize the solutions of the subproblems via Lagrange-multipliers. We will use the following theorem frequently without always referring to it.
\begin{theorem}[Characterization of the subproblems]\label{thm:global_solution_subproblem}
Let $f \in \mathbb R,g \in H$, $B \in \mathcal{L}(H)$ be a self-adjoint linear bounded operator and $\Delta >0$. Then the following statements are true.
\begin{itemize}
    \item[(i)] The vector $s_\star \in H$ is a global solution of the problem
\begin{equation}
\min_{s \in H} \, m(s):= f+\langle g,s\rangle+\tfrac12\langle Bs,s\rangle,
\quad \text{subject to} \quad \|s\| \leq \Delta,
\end{equation}
if and only if $\|s_\star\|\leq \Delta$  and there is a scalar $\lambda \geq 0$ such that the following conditions are satisfied:
\begin{align}
(B + \lambda I)s_\star &= -g, \notag\\
\lambda(\Delta - \|s_\star\|) &= 0, \label{eq:lagrange_system}\\
(B + \lambda I) &\text{ is positive semidefinite.} \notag
\end{align}
\item[(ii)] 
In addition if $B \succeq0$ (e.g. if $B = \nabla^2 f(x)$ for some convex $C^2$--function) and $\|s_\star\| < \Delta$, then $s_\star$ solves the unconstrained problem globally, i.e.
\begin{equation}
s_\star \in \argmin_{s \in H} m(s) = f + \langle g, s \rangle + \tfrac{1}{2} \langle B s, s\rangle  \label{eq:global_solution_subproblem} 
\end{equation}
\end{itemize}
\end{theorem}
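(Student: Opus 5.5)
Part (i) is the classical Moré--Sorensen characterization. I would prove sufficiency first, since it is a short algebraic identity, and then necessity, which is the main work. Part (ii) will follow from (i).

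\emph{Sufficiency in (i).} Suppose $\|s_\star\|\le\Delta$ and $\lambda\ge0$ satisfy \eqref{eq:lagrange_system}. For any $s$ with $\|s\|\le\Delta$, substitute $g=-(B+\lambda I)s_\star$ and expand. This gives
\[
m(s)-m(s_\star)=\tfrac12\langle (B+\lambda I)(s-s_\star),s-s_\star\rangle+\tfrac{\lambda}{2}\bigl(\|s_\star\|^2-\|s\|^2\bigr).
\]
The first term is nonnegative because $B+\lambda I\succeq0$. The second term is nonnegative as well: complementarity gives either $\lambda=0$ or $\|s_\star\|=\Delta\ge\|s\|$. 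Hence $s_\star$ is a global minimizer.

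\emph{Necessity in (i).} Let $s_\star$ be a global minimizer. We need a multiplier $\lambda\ge0$ with the stationarity and complementarity conditions, and then we must show $B+\lambda I\succeq0$.
\begin{itemize}
\item If $\|s_\star\|<\Delta$, then $s_\star$ is an interior minimizer. First-order conditions give $Bs_\star=-g$, and second-order conditions give $B\succeq0$, so $\lambda=0$ works.
\item If $\|s_\star\|=\Delta$, the constraint $\|s\|^2\le\Delta^2$ has nonzero gradient $2s_\star$, so LICQ holds. The KKT theorem then yields $\lambda\ge0$ with $(B+\lambda I)s_\star=-g$; complementarity is automatic.
\end{itemize}

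\emph{The main obstacle: global semidefiniteness when $\|s_\star\|=\Delta$.} For any $s$ with $\|s\|=\Delta$, the identity above (the $\lambda$-term now vanishes) together with global optimality gives
\[
\langle (B+\lambda I)(s-s_\star),s-s_\star\rangle\ge0.
\]
It remains to show that the directions $s-s_\star$ with $\|s\|=\Delta$ cover all directions up to scaling. Take any $w$ with $\langle w,s_\star\rangle\ne0$ and set $s=s_\star+tw$ with $t=-2\langle s_\star,w\rangle/\|w\|^2$. Then $\|s\|=\Delta$, so $\langle (B+\lambda I)w,w\rangle\ge0$. The remaining $w$ with $\langle w,s_\star\rangle=0$ lie in a hyperplane, and the inequality extends to them by density and continuity of the quadratic form; finite dimensionality makes continuity immediate. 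Hence $B+\lambda I\succeq0$.

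\emph{Part (ii).} Suppose $B\succeq0$ and $\|s_\star\|<\Delta$. Complementarity forces $\lambda=0$, so $Bs_\star=-g$ and $\nabla m(s_\star)=0$. Since $m$ is convex, a stationary point is a global unconstrained minimizer, which gives \eqref{eq:global_solution_subproblem}.
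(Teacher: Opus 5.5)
Your proof is correct and takes essentially the paper's route: for (i) the paper just cites the Mor\'e--Sorensen characterization in Nocedal--Wright, and your reconstruction follows that textbook proof (sufficiency from the identity $m(s)-m(s_\star)=\tfrac12\langle (B+\lambda I)(s-s_\star),s-s_\star\rangle+\tfrac{\lambda}{2}(\|s_\star\|^2-\|s\|^2)$, necessity from a Lagrange multiplier plus density of the directions $s-s_\star$ with $\|s\|=\Delta$), and your (ii) is exactly the paper's complementarity-plus-convexity argument. The only minor differences are that you get $\lambda\ge0$ directly from the inequality-constrained KKT theorem under LICQ, where the textbook uses the sphere-constrained Lagrangian and a separate contradiction argument for the sign, and that your reflection $t=-2\langle s_\star,w\rangle/\|w\|^2$ makes the density step explicit.
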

\begin{proof}
     The proof of $(i)$ can be found in \cite[Theorem~4.3]{wright2006numerical}. For part $(ii)$ note that $\|s_\star\| < \Delta$ implies that $\lambda=0$, due to the second equation in \eqref{eq:lagrange_system}. The first equation in \eqref{eq:lagrange_system} implies that $s_\star$ solves the first-order optimality system $Bs_\star =-g$, of the problem \eqref{eq:global_solution_subproblem}. As the subproblems are convex, we have $B\succeq 0$ and consequently $Bs_\star =-g$ is equivalent to \eqref{eq:global_solution_subproblem}.
\end{proof}
Let us now state the Cauchy decrease property, which is an important basis for proving convergence of trust-region methods. For that purpose we also introduce the \emph{Cauchy direction}, $s_k^C \in H$. It leads to a point of basic descent, which is often easy to compute and which, for a given radius $\Delta_k$ and a non-stationary point (i.e. $\|g_k\|>0$), is defined as follows:
\begin{equation}
    s_k^C = -t_k^C g_k,
\qquad
t_k^C =
\arg\min_{0 \leq t \leq \Delta_k/\|g_k\|}
m_k(-t g_k). \label{eq:def_cauchy_point}
\end{equation}
In our specific setting, where $m_k$ in \eqref{eq:def_model_problem_k} is quadratic, the Cauchy direction can be calculated in closed form, since it is simply the solution of a one-dimensional quadratic minimization problem over a closed interval. Also for more general model functions, the Cauchy direction can be approximated easily; see e.g., \cite{conn2000trust}.

\begin{lemma}[Cauchy decrease]\label{lem:normal_cauchy_decrease}
Let $m_k$ be the trust-region model defined in \eqref{eq:def_model_problem_k} and denote the trust-region radius by $\Delta_k>0$.  Then the Cauchy direction $s_k^C \in H$, defined in \eqref{eq:def_cauchy_point} satisfies
\begin{equation}
m_k(0) - m_k(s_k^C)
\;\geq\;
\kappa_C \,\|g_k\| \,
\min\!\left\{
\frac{\|g_k\|}{1+\|H_k\|_{\mathrm{op}}},\, \Delta_k
\right\}  \tag{FCD}  \label{eq:standard_cauchy_decrease}
\end{equation}
for some constant $\kappa_C \in (0,1)$.
\end{lemma}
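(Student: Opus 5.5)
The plan is to reduce the statement to a one-dimensional minimization along $-g_k$. Define $\phi(t):=m_k(-tg_k)-m_k(0)=-t\|g_k\|^2+\tfrac{t^2}{2}\kappa_k$, where $\kappa_k:=\langle H_kg_k,g_k\rangle$, and let $T:=\Delta_k/\|g_k\|$. By definition \eqref{eq:def_cauchy_point}, $t_k^C$ minimizes $\phi$ on $[0,T]$. Hence $m_k(0)-m_k(s_k^C)=-\phi(t_k^C)\ge-\phi(t)$ for every $t\in[0,T]$. It therefore suffices to exhibit one admissible $t$ that achieves the claimed decrease. Throughout, I use $\kappa_k\le\|H_k\|_{\mathrm{op}}\|g_k\|^2\le(1+\|H_k\|_{\mathrm{op}})\|g_k\|^2$.

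I will take the explicit candidate $t:=\min\{1/(1+\|H_k\|_{\mathrm{op}}),\,T\}$, which is admissible. Then
\[
-\phi(t)\ \ge\ t\|g_k\|^2-\tfrac{t^2}{2}(1+\|H_k\|_{\mathrm{op}})\|g_k\|^2 \ =\ t\|g_k\|^2\Bigl(1-\tfrac{t}{2}(1+\|H_k\|_{\mathrm{op}})\Bigr)\ \ge\ \tfrac12 t\|g_k\|^2,
\]
where the last inequality holds because $t(1+\|H_k\|_{\mathrm{op}})\le1$. Note that if $\kappa_k\le0$ the first inequality is trivial, so this step needs no case distinction on the sign of the curvature.

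Substituting $t$ gives $\tfrac12\|g_k\|\min\{\|g_k\|/(1+\|H_k\|_{\mathrm{op}}),\,\Delta_k\}$, because $t\|g_k\|=\min\{\|g_k\|/(1+\|H_k\|_{\mathrm{op}}),\Delta_k\}$. This proves \eqref{eq:standard_cauchy_decrease} with $\kappa_C=\tfrac12$. If a constant strictly inside $(0,1)$ is desired, any smaller value such as $\kappa_C=\tfrac12$ already qualifies.

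There is no real obstacle here. The only point requiring care is to avoid the usual three-case analysis (nonpositive curvature, interior minimizer, boundary minimizer). This is done by comparing with the suboptimal step $t$ instead of computing $t_k^C$ exactly, and by using the ``$1+$'' in the denominator so that the argument also covers $H_k=0$.
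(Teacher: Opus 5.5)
Your proof is correct and gives the bound with the explicit constant $\kappa_C=\tfrac12$, but it is not the paper's route: the paper gives no argument of its own and simply cites \cite[Lemma~2.3.2]{cartis2022evaluation}.

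The standard textbook proof of this Cauchy-decrease bound (see also \cite{conn2000trust}) computes the minimizer $t_k^C$ of the one-dimensional quadratic explicitly. It then splits into cases according to the sign of the curvature $\langle H_k g_k,g_k\rangle$ and according to whether the unconstrained one-dimensional minimizer lies inside $[0,\Delta_k/\|g_k\|]$ or is cut off at the boundary.

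You instead bound the optimal decrease from below by the decrease at one admissible test step, $t=\min\{1/(1+\|H_k\|_{\mathrm{op}}),\Delta_k/\|g_k\|\}$, after replacing the curvature by its upper bound $(1+\|H_k\|_{\mathrm{op}})\|g_k\|^2$. This removes every case distinction and yields the same constant $\tfrac12$ as the classical result, so it would make the paper self-contained at this point. The citation keeps the paper short, and nothing is lost by using your version. Later arguments (\Cref{lem:well-definedness} and \Cref{thm:tr_local_fast_convergence}) only use that some $\kappa_C\in(0,1)$ exists, so $\kappa_C=\tfrac12$ can be substituted everywhere.

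Two cosmetic points. Your argument tacitly needs $\|g_k\|>0$ so that $T$ and $s_k^C$ are defined; this is built into the paper's definition of the Cauchy point. Your closing sentence about choosing a constant ``strictly inside $(0,1)$'' is redundant, since $\tfrac12$ already lies there.
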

\begin{proof}
    A proof of this result can be found in \cite[Lemma~2.3.2]{cartis2022evaluation}.
\end{proof}
As a direct consequence we obtain the following lemma.
\begin{lemma}[Well-definedness and monotonicity]
\label{lem:well-definedness}
Consider either \Crefalgvariant{var:variant1}{} or \Crefalgvariant{var:variant2}{} of \Cref{alg:abstract_tr_algorithm}, and suppose that $\|g_k\|>0$. Then the following statements hold:
\begin{enumerate}
 
\item[(i)] the denominator of the acceptance ratio is strictly positive:
    \[
        p_k(s_k)
        +\frac{\vartheta}{2}
        \min\bigl\{
            \|\nabla f(x_k)\|,
            \|\nabla f(x_k+s_k)\|
        \bigr\}\|s_k\|
        >0;
    \]
    consequently, \(\rho_k^\vartheta\) is well defined;
    \item[(ii)] the objective values are monotonically nonincreasing:
    \[
        f(x_{k+1})\leq f(x_k).
    \]
    More precisely, the inequality is strict whenever iteration \(k\)
    is successful, i.e. $\rho_k^\vartheta \geq \eta_1$.
\end{enumerate}
\end{lemma}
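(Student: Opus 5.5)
Part (i) reduces to showing that the predicted decrease $p_k(s_k)$ is strictly positive whenever $\|g_k\|>0$. Since $\vartheta\ge 0$ and the added term $\frac{\vartheta}{2}\min\{\|\nabla f(x_k)\|,\|\nabla f(x_{k,+})\|\}\|s_k\|$ is a product of nonnegative factors, it is nonnegative. Hence the denominator is at least $p_k(s_k)$, and strict positivity of $p_k(s_k)$ suffices.

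To show $p_k(s_k)>0$, I will compare with the Cauchy step. The point $s_k^C$ from \eqref{eq:def_cauchy_point} is feasible for the trust-region subproblem, because $\|s_k^C\| = t_k^C\|g_k\|\le \Delta_k$. Therefore any global minimizer $s_k^\star$ satisfies $m_k(s_k^\star)\le m_k(s_k^C)$, that is,
\[
m_k(0)-m_k(s_k^\star)\;\ge\; m_k(0)-m_k(s_k^C)\;\ge\;\kappa_C\|g_k\|\min\Bigl\{\tfrac{\|g_k\|}{1+\|H_k\|_{\mathrm{op}}},\Delta_k\Bigr\}>0,
\]
where the second inequality is \Cref{lem:normal_cauchy_decrease}. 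The right-hand side is positive because $\kappa_C>0$, $\|g_k\|>0$ and $\Delta_k>0$. For this I need to check that $\Delta_k$ stays positive along the iteration. This follows by induction from $\Delta_0>0$, since every update multiplies the radius by a factor of at least $\gamma_1>0$.

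The two variants then differ only in how $s_k$ relates to $s_k^\star$:
\begin{itemize}
\item In \Crefalgvariant{var:variant2}{}, $s_k=s_k^\star$ by \eqref{eq:exact_tr_subproblem}, so $p_k(s_k)>0$ immediately.
\item In \Crefalgvariant{var:variant1}{}, \eqref{eq:fractional_model_descent} gives $p_k(s_k)\ge \frac{1}{1+\tau_k}\,p_k(s_k^\star)>0$, using $\tau_k\ge 0$ so that $1+\tau_k>0$.
\end{itemize}
In both cases the denominator of \eqref{eq:rho_cat_def} is strictly positive, so $\rho_k^\vartheta$ is well defined.

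For (ii) I distinguish the two outcomes of the ratio test.
\begin{itemize}
\item If the iteration is unsuccessful ($\rho_k^\vartheta<\eta_1$), then $x_{k+1}=x_k$ and equality holds.
\item If it is successful ($\rho_k^\vartheta\ge\eta_1$), multiplying by the positive denominator from (i) gives
\[
f(x_k)-f(x_{k+1})\;\ge\;\eta_1\Bigl(p_k(s_k)+\tfrac{\vartheta}{2}\min\{\cdot\}\|s_k\|\Bigr)\;\ge\;\eta_1\,p_k(s_k)>0,
\]
since $\eta_1>0$. This yields the strict decrease.
\end{itemize}

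There is no real obstacle in this argument. The only points needing care are the feasibility of the Cauchy step and the positivity of $\Delta_k$ throughout the iteration, both of which are checked above.
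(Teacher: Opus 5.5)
Your proposal is correct and follows the paper's proof essentially step for step: you bound $p_k(s_k)$ below by the optimal model decrease and then by the Cauchy decrease, using the fractional-decrease condition for Variant~I, and for (ii) you multiply the ratio test by the positive denominator. The small differences are that you use $\tau_k$ where the paper uses the cruder bound $\bar\tau$, and that you explicitly check feasibility of the Cauchy step and positivity of $\Delta_k$, which the paper leaves implicit.
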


\begin{proof}
To prove the claim, it suffices to apply \Cref{lem:normal_cauchy_decrease}. For \Crefalgvariant{var:variant1}{}, accounting for the inexactness parameter $\bar{\tau}>0$, we obtain
\begin{equation}
m_k(0)-m_k(s_k)
\geq
\frac{1}{1+\bar{\tau}}
\bigl(m_k(0)-m_k(s_k^\star)\bigr)
\geq
\frac{1}{1+\bar{\tau}}
\bigl(m_k(0)-m_k(s_k^C)\bigr)
\overset{\eqref{eq:standard_cauchy_decrease}}{>}
0,
\end{equation}
where the strict inequality follows from the assumptions $\|g_k\|>0$ and $\Delta_k>0$. Hence, $p_k(s_k)>0$ for \Crefalgvariant{var:variant1}{}. The argument for \Crefalgvariant{var:variant2}{} is analogous. Therefore, the acceptance ratio is well defined for both variants, proving~$(i)$.
For $(ii)$, if $\rho_k^\vartheta<\eta_1$, the step is rejected and the objective value remains unchanged. Otherwise,
\begin{equation*}
f(x_k)-f(x_{k+1})
\geq \eta_1\left(
p_k(s_k)
+\frac{\vartheta}{2}
\min\left\{ \|\nabla f(x_k)\|, \|\nabla f(x_k+s_k)\| \right\}\|s_k\|
\right)
\overset{(i)}{>}0.
\end{equation*}
Thus statement $(ii)$ is proven.
\end{proof}
\subsection{On the set of successful iterations}
Before starting the analysis, we shall follow the usual approach in trust-region methods and divide the set of iterations into successful and unsuccessful iterations. For this purpose, define the sets
\begin{equation*}
   \mathcal S:=\{k\in\mathbb N:\rho_k^\vartheta\ge \eta_1\}, \qquad \mathcal{U}:= \mathbb N \setminus \mathcal{S} .
\end{equation*}
We call $\mathcal{S}$ the set of \emph{successful} iterations and $\mathcal{U}$ the set of \emph{unsuccessful} iterations. Moreover, if $\rho_k^\vartheta \geq \eta_2$, we call the iteration $k$ \emph{very successful}. Let us further introduce the following sets:
\begin{equation}
    \mathcal{S}_k := \{ j \in \mathcal{S}: j \leq k\} ,\qquad \mathcal{U}_k := \{ j \notin \mathcal{S}: j \leq k\} 
\end{equation}
Let us first prove that under \Cref{ass:nonconvex_assumptions} the set of successful iterations is infinite or \Cref{alg:abstract_tr_algorithm} reaches a stationary point in a finite number of steps.
\begin{lemma}[Infinitely many successful iterations]\label{thm:infinite_successful_set}
Let \Cref{ass:nonconvex_assumptions} hold true. Then the following statements are valid.
\begin{itemize}
    \item[(i)] In the setting of \Crefalgvariant{var:variant1}{}, \Cref{alg:abstract_tr_algorithm}  either generates a first-order stationary point in finitely many iterations, or the set $\mathcal{S}$ of successful iterations is infinite. 
\item[(ii)] In the setting of \Crefalgvariant{var:variant2}{},  if in addition
\begin{equation}
0<\eta_1 <\frac{1}{1+\vartheta/2},    \label{eq:lem_succ_0} 
\end{equation}
then \Cref{alg:abstract_tr_algorithm}  either generates a first-order stationary point in finitely many iterations, or the set of successful iterations $\mathcal{S}$ is also infinite.
\end{itemize}

\end{lemma}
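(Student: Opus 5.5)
We argue by contradiction. Suppose the algorithm never terminates, so $\|g_k\|>0$ for all $k$, and suppose $\mathcal S$ is finite. Then there is an index $k_0$ such that every $k\ge k_0$ is unsuccessful. Consequently $x_k=x_{k_0}$ for all $k\geq k_0$, so $g_k=g_{k_0}=:g\neq 0$ and $H_k=H_{k_0}$ are constant. Moreover $\Delta_{k+1}\le\gamma_2\Delta_k$ for $k\ge k_0$, hence $\Delta_k\to0$. The goal is to show that $\rho_k^\vartheta\to 1$ in case (i), and that $\liminf_k\rho_k^\vartheta\ge 1/(1+\vartheta/2)$ in case (ii). Since $\eta_1<1$, respectively $\eta_1<1/(1+\vartheta/2)$ by \eqref{eq:lem_succ_0}, this eventually forces $\rho_k^\vartheta\ge\eta_1$, contradicting $k\in\mathcal U$.

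\textbf{Lower bound on the predicted decrease.} Once $\Delta_k\le\|g\|/(1+\|H_{k_0}\|_{\mathrm{op}})$, \Cref{lem:normal_cauchy_decrease} gives $m_k(0)-m_k(s_k^C)\ge\kappa_C\|g\|\Delta_k$. Since $s_k^\star$ minimizes the model over the ball, the fractional-decrease condition \eqref{eq:fractional_model_descent} in case (i), or exactness \eqref{eq:exact_tr_subproblem} in case (ii), yields
\[
p_k(s_k)\ge \tfrac{\kappa_C}{1+\bar\tau}\|g\|\Delta_k .
\]

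\textbf{Case (i).} By \Cref{lem:model_approximation},
\[
|a_k(s_k)-p_k(s_k)| = |f(x_k+s_k)-m_k(s_k)|\le C_f\|s_k\|^{2+\nu}\le C_f\Delta_k^{2+\nu}.
\]
Therefore
\[
|\rho_k-1|\le \frac{(1+\bar\tau)C_f\Delta_k^{1+\nu}}{\kappa_C\|g\|}\to0.
\]
Here $\nu\ge0$, so $1+\nu\geq 1>0$ and the exponent causes no difficulty.

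\textbf{Case (ii).} Write $d_k:=p_k(s_k)+\frac\vartheta2\min\{\|g\|,\|\nabla f(x_k+s_k)\|\}\|s_k\|$ for the denominator. Then
\[
\rho_k^\vartheta=\frac{a_k(s_k)}{d_k}\ge\frac{p_k(s_k)-C_f\Delta_k^{2+\nu}}{p_k(s_k)+\frac\vartheta2\|g\|\|s_k\|}.
\]
This step is the main obstacle. The added term $\frac\vartheta2\|g\|\|s_k\|$ is of the same order $\Delta_k$ as $p_k$, so the ratio does not tend to one, and we need a sharper relation between $\|g\|\|s_k\|$ and $p_k(s_k)$. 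The plan is to use the exact solution. For small $\Delta_k$, $\|g\|\|s_k\|$ is essentially bounded by $p_k(s_k)$ itself, because
\[
p_k(s_k)=-\langle g,s_k\rangle-\tfrac12\langle H_ks_k,s_k\rangle\ge -\langle g,s_k\rangle-\tfrac12\|H_k\|\Delta_k^2.
\]
Comparing $s_k$ with the feasible point $-\|s_k\|g/\|g\|$, optimality of $s_k$ gives
\[
p_k(s_k)\ge\|g\|\|s_k\|-\tfrac12\|H_k\|\|s_k\|^2.
\]
It remains to rule out $\|s_k\|$ being much smaller than $\Delta_k$. For $\Delta_k$ small the multiplier satisfies $\lambda_k\approx\|g\|/\Delta_k\to\infty$. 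Indeed, if $\|s_k\|<\Delta_k$ then $\lambda_k=0$ and $H_ks_k=-g$, so $\|s_k\|\ge\|g\|/\|H_k\|$, which is impossible once $\Delta_k<\|g\|/\|H_k\|$; hence $\|s_k\|=\Delta_k$ eventually. Combining these gives
\[
\frac\vartheta2\|g\|\|s_k\|\le\frac\vartheta2 p_k(s_k)+O(\Delta_k^2),
\]
and therefore
\[
\rho_k^\vartheta\ge\frac{p_k-O(\Delta_k^{2+\nu})}{(1+\vartheta/2)p_k+O(\Delta_k^2)}.
\]
Using $p_k\gtrsim\Delta_k$, the right-hand side tends to $1/(1+\vartheta/2)>\eta_1$, which gives the contradiction. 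Finally, $a_k(s_k)\ge0$ for small $\Delta_k$ in both cases, so the sign of the numerator causes no trouble.
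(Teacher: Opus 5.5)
Your proof is correct and follows the paper's route: freeze the iterate, use $\Delta_k\to 0$, and in case (ii) compare the exact solution with a steepest-descent trial point to get $\liminf_k\rho_k^\vartheta\ge 1/(1+\vartheta/2)>\eta_1$. The only difference there is cosmetic: the paper compares with $-\Delta_k g/\|g\|$ and sandwiches $p_k(s_k)=\|g\|\Delta_k+O(\Delta_k^2)$, whereas you compare with $-\|s_k\|g/\|g\|$ and take $p_k\gtrsim\Delta_k$ from Cauchy decrease. For (i), the paper only cites the classical result, and your estimate $|\rho_k-1|=O(\Delta_k^{1+\nu})$ supplies it directly; your detour showing $\|s_k\|=\Delta_k$ eventually is unnecessary, since the $O(\Delta_k^2)$ term only needs $\|s_k\|\le\Delta_k$.
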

\begin{proof}
For the inexact trust-region method \Crefalgvariant{var:variant1}{}, the statement is classical and can be found in \cite{conn2000trust}. The proof in \cite{conn2000trust} is given under the assumption that $\|\nabla^2 f(x_k)\|_{\mathrm{op}}$ is uniformly bounded in $k$. However, this assumption is not needed here. Indeed, for a proof by contradiction, similar to the proof of $(ii)$, it suffices that $x_k = \bar{x}$ for some $\bar{x} \in H$ and all sufficiently large $k$; the required boundedness then follows immediately. \\[0.2cm]
Let us now consider \Crefalgvariant{var:variant2}{} of the algorithm: Suppose, for contradiction, that the set of successful iterations $\mathcal{S}$ is finite and that the method does not generate a first-order stationary point in finitely many iterations. Then there exists $K_0\in\mathbb N$ such that every iteration $k\geq K_0$ is unsuccessful and consequently $x_k=:\bar x$ for all $ k \geq K_0$ and  some $\bar x \in H$. Also note that $\|\nabla f( \bar x)\|>0$ for all $k\geq K_0$ since no first-order stationary point was generated in a finite number of steps. Moreover, as every iteration $k\geq K_0$ is unsuccessful, the radius update gives $\Delta_{k+1} \leq \gamma_2 \Delta_k$ with $0<\gamma_2<1$ for all $k \geq K_0$. Consequently
\begin{equation}
\Delta_k\to 0 \qquad (k \to \infty).   \label{eq:lem_succ_1} 
\end{equation}
Let us now consider the acceptance ratio \eqref{eq:rho_cat_def}. The numerator can be estimated from below via
\begin{equation}
    f(x_k) - f(x_k + s_k) = (m_k(0) - m_k(s_k)) + m_k(s_k) - f(x_k +s_k) \overset{\eqref{eq:eq_model_approx_func}}{\geq}  p_k(s_k) - C_f\Delta_k^{2+\nu},  \label{eq:lem_succ_2}
\end{equation}
where we used $p_k(s_k)$ for the predicted decrease as in \eqref{eq:def_actual_decrease_predicted_decrease}. Note that by construction $s_k$ is a minimizer of the map $ s \mapsto \langle \nabla f(\bar x),s \rangle + \frac{1}{2}\langle \nabla^2f(\bar x)s,s\rangle = - p_k(s)$ over the set $\{ s \in H:\|s\| \leq \Delta_k\}$ and consequently also a maximizer of $s\mapsto p_k(s)$ over the same set.
Thus we may compare against the feasible point $\tilde s:= -\Delta_k (\nabla f(\bar x)/ \|\nabla f(\bar x)\|)$ which yields
\begin{equation}
    p_k(s_k) 
=
\max_{\|s\|\leq \Delta_k}
\left\{
-\langle \nabla f(\bar x) ,s\rangle
-\frac12\langle \nabla^2 f(\bar x) s,s\rangle
\right\} \overset{(s= \tilde s)}{\geq } \| \nabla f(\bar x)\| \Delta_k - \frac{1}{2}\|\nabla^2 f(\bar x)\|_{\mathrm{op}} \Delta_k^2 \qquad (k \geq K_0). \label{eq:lem_succ_3}
\end{equation}
For the denominator we may estimate by Cauchy--Schwarz inequality 
\begin{equation}
\begin{aligned}
      p_k(s_k) &\leq \| \nabla f(\bar x)\| \Delta_k + \frac{1}{2}\|\nabla^2 f(\bar x)\|_{\mathrm{op}} \Delta_k^2 \\
      \min\{\| \nabla f(\bar x)\|,\| \nabla f(\bar x + s_k)\| \} \|s_k\| &\leq \| \nabla f(\bar x)\| \Delta_k\label{eq:lem_succ_4}
\end{aligned}
\end{equation}
for all $k \geq K_0$. As $\Delta_k \to 0$, we may enlarge $K_0$ such that additionally
\begin{equation*}
    \| \nabla f(\bar x)\| \Delta_k - \frac{1}{2}\|\nabla^2 f(\bar x)\|_{\mathrm{op}}\Delta_k^2- C_f\Delta_k^{2+\nu} >0 \qquad \text{for all $k \geq K_0$.}
\end{equation*}
Now combining the previous estimates, we obtain for the acceptance ratio
\begin{align*}
     \rho_k^\vartheta = \frac{f(x_k) - f(x_{k}+s_k)}{p_k(s_k)+ \frac{\vartheta}{2}\min\{ \|\nabla f(x_k)\|,\|\nabla f(x_{k,+})\|\} \|s_k\|} \overset{\eqref{eq:lem_succ_2},\eqref{eq:lem_succ_3},\eqref{eq:lem_succ_4}}{\geq } \frac{\| \nabla f(\bar x)\| \Delta_k - \frac{1}{2}\|\nabla^2 f(\bar x)\|_{\mathrm{op}}\Delta_k^2- C_f\Delta_k^{2+\nu}}{\| \nabla f(\bar x)\| \Delta_k + \frac{1}{2}\|\nabla^2 f(\bar x)\|_{\mathrm{op}}\Delta_k^2 + \frac{\vartheta}{2} \| \nabla f(\bar x)\| \Delta_k}.
\end{align*}
Recalling \eqref{eq:lem_succ_1} and letting $k \to \infty$, we infer after division by $\Delta_k\|\nabla f(\bar x)\|>0$ in both the denominator and numerator that
\begin{equation*}
    \rho_k^\vartheta 
    \geq
    \frac{ 1 - \frac{(\frac{1}{2}\|\nabla^2 f(\bar x)\|_{\mathrm{op}} \Delta_k+ C_f\Delta_k^{1+\nu})}{\|\nabla f(\bar x)\|}}{1 + \frac{\frac{1}{2}\|\nabla^2 f(\bar x)\|_{\mathrm{op}} \Delta_k}{\| \nabla f(\bar x)\|}  + \frac{\vartheta}{2}} \to \frac{1}{1 + \frac{\vartheta}{2}} \qquad (k \to \infty) .
\end{equation*}
By assumption \eqref{eq:lem_succ_0}, we deduce that eventually $\rho_k^\vartheta \geq \eta_1$ for sufficiently large  $k \geq K_0$, which contradicts the assumption that there are no successful iterations after $K_0$.
\end{proof}

\paragraph{Standing convention.}\label{par:standing_convention}
By \Cref{thm:infinite_successful_set}, \Cref{alg:abstract_tr_algorithm} either reaches a first-order stationary point in finitely many iterations, or the set $\mathcal{S}$ of successful iterations is infinite. The finite-termination case is harmless for the complexity estimates in this work: in the nonconvex regime the desired first-order stationarity has already been reached, while under convexity every stationary point is a global minimizer. We therefore work, from now on, in the non-terminating case. More precisely, we assume 
\[
    \|\nabla f(x_k)\|>0,
    \qquad
    f(x_k)-f_\star>0
    \qquad \text{for all } k\in\mathbb N .
\]
In particular all quantities involving \(F_0^{-\alpha}\) are understood under this convention and are well defined.

\subsection{Global iteration complexity guarantees}
We briefly recall what we mean by complexity guarantees. For two nonnegative sequences $(a_k)_{k \in \mathbb N}$ and $(b_k)_{k \in \mathbb N}$, throughout the paper, the notation $a_k=\mathcal O(b_k)$ means that there exist
constants $C>0$ and $k_0\in\mathbb N$, independent of $k$, such that
\[
a_k\le C b_k \qquad \text{for all } k\geq k_0 .
\]
Similarly, for complexity bounds depending on an accuracy parameter
$\varepsilon>0$, the notation $K(\varepsilon)= \mathcal O(\phi(\varepsilon))$ as
$\varepsilon\downarrow0$ means that there exist constants $C>0$ and
$\varepsilon_0>0$, independent of $\varepsilon$, such that
\[
K(\varepsilon)\le C\phi(\varepsilon)
\qquad \text{for all } \varepsilon\in(0,\varepsilon_0].
\]
Unless stated otherwise, the hidden constants may depend on the problem data
and on the algorithmic parameters, but not on the target accuracy $\varepsilon$ or on the iteration counter. \\

The following theorem enables us to use bounds on the complexity on the subset of successful iterations to derive global iteration complexities with respect to all iterations. It is standard in this regard for trust-region analysis, see \cite{cartis2022evaluation}. 
\begin{lemma}[General complexity bound]\label{thm:relation_successful_unsuccessful}
    Let $k \geq 1$ and consider the $k^{\mathrm{th}}$ iteration of \Cref{alg:abstract_tr_algorithm}. Also let $\Delta_\varepsilon>0$ be a real number such that $\Delta_\varepsilon\leq \Delta_{k}$. Then the following inequality holds 
    \begin{align}
    k
    \leq
    \left(
        1+\frac{\log(\gamma_3)}{|\log(\gamma_2)|}
    \right)
    |\mathcal{S}_{k-1}|
    +
    \frac{1}{|\log(\gamma_2)|}
     \log_+\left( 
        \frac{\Delta_0 }{\Delta_\varepsilon}
    \right)\notag.
\end{align}
Here we used the notation $\log_+(x):= \max(0,\log(x))$ for the positive part of the $\log$--function.
\end{lemma}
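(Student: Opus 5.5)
The argument is a counting estimate based only on how the radius is updated, and uses no property of $f$. Split the iterations $0,\dots,k-1$ into the successful ones, $\mathcal{S}_{k-1}$, and the unsuccessful ones, $\mathcal{U}_{k-1}$. Then $k=|\mathcal{S}_{k-1}|+|\mathcal{U}_{k-1}|$, and it suffices to bound $|\mathcal{U}_{k-1}|$ in terms of $|\mathcal{S}_{k-1}|$ and $\log(\Delta_0/\Delta_\varepsilon)$.

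The radius update rule in \Cref{alg:abstract_tr_algorithm} gives:
\begin{itemize}
\item $\Delta_{j+1}\leq\gamma_3\Delta_j$ for every successful $j$, since $\gamma_2\le 1\le\gamma_3$ and both successful branches give $\Delta_{j+1}\le\gamma_3\Delta_j$;
\item $\Delta_{j+1}\leq\gamma_2\Delta_j$ for every unsuccessful $j$.
\end{itemize}
Chaining these factors from $j=0$ to $j=k-1$ yields
\[
\Delta_\varepsilon\leq\Delta_k\leq\Delta_0\,\gamma_3^{|\mathcal{S}_{k-1}|}\,\gamma_2^{|\mathcal{U}_{k-1}|}.
\]
Taking logarithms and using $\log\gamma_2=-|\log\gamma_2|<0$ gives
\[
|\mathcal{U}_{k-1}|\,|\log\gamma_2|\leq |\mathcal{S}_{k-1}|\log\gamma_3+\log\!\left(\frac{\Delta_0}{\Delta_\varepsilon}\right).
\]
The left-hand side is nonnegative, so the logarithm on the right may be replaced by $\log_+(\Delta_0/\Delta_\varepsilon)$ without breaking the inequality. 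Dividing by $|\log\gamma_2|$ and adding $|\mathcal{S}_{k-1}|$ to both sides gives exactly the claimed bound on $k$.

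No step is a real obstacle. The points that need care are the index bookkeeping, namely that exactly the iterations $0,\dots,k-1$ determine $\Delta_k$, and checking that the successful-case bound $\Delta_{j+1}\le\gamma_3\Delta_j$ covers both the $[\gamma_2\Delta_j,\Delta_j]$ and $[\Delta_j,\gamma_3\Delta_j]$ branches. This is the same argument as in \cite{cartis2022evaluation}.
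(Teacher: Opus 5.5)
Your proposal is correct and follows the paper's proof step for step. The bounds $\Delta_{j+1}\le\gamma_3\Delta_j$ for successful $j$ and $\Delta_{j+1}\le\gamma_2\Delta_j$ for unsuccessful $j$ are chained over $0,\dots,k-1$, logarithms are taken, and the result is combined with $k=|\mathcal{S}_{k-1}|+|\mathcal{U}_{k-1}|$. One small correction: replacing $\log(\Delta_0/\Delta_\varepsilon)$ by $\log_+(\Delta_0/\Delta_\varepsilon)$ is justified simply because $\log\le\log_+$ makes the right-hand side larger. The nonnegativity of the left-hand side plays no role in that step.
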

\begin{remark}
Here and during the whole manuscript, if not stated otherwise, we use the convention $\log(x) = \log_e(x)$.    
\end{remark}

\begin{proof}
  A similar proof is given in \cite[Lemma~2.3.1]{cartis2022evaluation}. Since the indexing is slightly different here, we repeat the argument for the reader's convenience. Let $k  \geq 1$ be given. By the radius update mechanism, for every $i \in \mathbb N$
  \begin{equation*}
      \begin{dcases}
          \Delta_{i+1} \leq \gamma_2\Delta_i &\text{if $i \notin \mathcal{S}$}\\
          \Delta_{i+1} \leq  \gamma_3\Delta_i & \text{if $i \in \mathcal{S}$}.
      \end{dcases} 
  \end{equation*}  
  By repeatedly applying these bounds from $i=0$ to $i=k-1$, and taking into account $\Delta_\varepsilon \leq \Delta_k$, we obtain
\begin{equation*}
\Delta_\varepsilon \leq \Delta_k
\leq
\Delta_0\gamma_3^{|\mathcal{S}_{k-1}|}\gamma_2^{|\mathcal{U}_{k-1}|}. 
\end{equation*} 
Dividing by $\Delta_0>0$, taking $\log$ on both sides and taking into account that $\log(\gamma_2)<0$, we infer 
\begin{equation}
| \mathcal{U}_{k-1}|
\leq \left(
\frac{\log(\gamma_3)}{|\log(\gamma_2)|} \right) | \mathcal{S}_{k-1}|
+
\frac{1}{|\log(\gamma_2)|}
\log\left(\frac{\Delta_0}{\Delta_{\varepsilon}}\right).\label{eq:successful_bound_eq1}
\end{equation}
Since every index $i\in\{0,\ldots,k-1\}$ is either successful or unsuccessful, we have
\[
|\mathcal{S}_{k-1}| + |\mathcal{U}_{k-1}| = k .
\]
Therefore combining this with \eqref{eq:successful_bound_eq1} yields,
\[
k
\leq
\left(
1+\frac{\log(\gamma_3)}{|\log(\gamma_2)|}
\right)|\mathcal S_{k-1}|
+
\frac{1}{|\log\gamma_2|}
\log_+\left(\frac{\Delta_0}{\Delta_{\varepsilon}}\right),
\]
which proves the claim.
\end{proof}

\section{A universal complexity mechanism for the convex regime}\label{sec:abstract_convexity_thm}
We now turn to one of the main contributions of this article: the universal
complexity analysis in the convex regime. The aim of this section is to develop
an abstract trust-region complexity theorem that applies to both
\Crefalgvariant{var:variant1}{} and \Crefalgvariant{var:variant2}{} of
\Cref{alg:abstract_tr_algorithm}. The theorem isolates the two ingredients
needed for the analysis: a function-gap model-decrease property and the fact
that sufficiently small trust-region radii lead to very successful iterations.
Once these two properties are verified, the abstract framework yields the
universal convex complexity bound
\[
    \mathcal{O}\bigl(\varepsilon^{-1/(1+\nu)}\bigr)
\]
without requiring knowledge of the H\"older exponent \(\nu\).
\subsection{A model decrease property}
The following result is the central estimate for this analysis. It is reminiscent of the classical Cauchy-decrease property \eqref{eq:standard_cauchy_decrease}; however, instead of a decrease estimate in terms of the gradient norm, it provides a model-decrease estimate in terms of the scaled function gap \((f(x_k)-f_\star)^\alpha\). This estimate will eventually enable us to prove the desired complexity bounds. The proof is inspired by \cite{cartis2012evaluation}.

First, note that by the standing convention in \Cref{par:standing_convention} the expression \(\Delta_0/F_0^\alpha\) is well-defined.

\begin{lemma}[Function-gap model-decrease property]\label{lem:convex_model_decrease_alpha}
    Let \Cref{ass:nonconvex_assumptions} and \Cref{ass:convex_assumptions} hold true and consider the $k^{\mathrm{th}}$ fixed iteration of \Cref{alg:abstract_tr_algorithm} with current iterate $x_k \in H$. Let $\alpha>0$ as in \eqref{eq:def_C_f_C_g_alpha} and define the constants
    \begin{equation}
        \beta:=\min\left\{ \frac{D}{F_0^{\alpha}},
\left(\frac{1-\eta_2}{2C_f D}\right)^{\alpha}
\right\}.\label{eq:defn_beta_C}
    \end{equation}
For a trial radius $\Delta>0$ let $s_k(\Delta)$ denote a global minimizer of the subproblem
\begin{equation}
\min_{\|s\|\leq \Delta} m_k(s)=  f(x_k) + \langle\nabla  f(x_k),s\rangle + \frac12\langle \nabla^2 f(x_k)s,s\rangle.  \label{eq:fcd_thm_subproblem}  
\end{equation}
Then the following \emph{model-decrease} inequality is valid 
\begin{equation}
m_k(0)-m_k(s_k(\Delta)) \geq \frac{(1+\eta_2)F_k}{2D}\min\left \{\beta F_k^{\alpha},\,\Delta \right \}.  \tag{$\mathrm{MD}_\alpha$} \label{eq:md_alpha}
\end{equation}
\end{lemma}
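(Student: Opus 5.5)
The plan is to compare the exact subproblem solution $s_k(\Delta)$ with a feasible trial step that points from $x_k$ toward the solution set. This is the convex argument of \cite{cartis2012evaluation}, adapted to the H\"older remainder from \Cref{lem:model_approximation}.

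\emph{Choice of the comparison direction.} By \Cref{lem:well-definedness}(ii), $f(x_k)\leq f(x_0)$. Hence $x_k$ lies in the sublevel set appearing in \eqref{eq:assumption_sublevel}. Let $x_\star\in X_\star$ be a closest point to $x_k$; it exists because $X_\star$ is closed, convex and nonempty. Set $d:=x_\star-x_k$, so that $\|d\|\leq D$. Under the standing convention $F_k>0$, so $x_k\notin X_\star$, which gives $D\geq\|d\|>0$ and makes all the quotients below well defined.

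\emph{Decrease along the segment.} For $t\in[0,1]$, the model bound \eqref{eq:eq_model_approx_func} and convexity of $f$ along the segment from $x_k$ to $x_\star$ give
\begin{equation*}
m_k(td)\leq f(x_k+td)+C_f t^{2+\nu}\|d\|^{2+\nu}\leq (1-t)f(x_k)+tf_\star+C_f t^{2+\nu}D^{2+\nu}.
\end{equation*}
Therefore
\begin{equation*}
m_k(0)-m_k(td)\geq tF_k-C_f t^{2+\nu}D^{2+\nu}.
\end{equation*}
We want the right-hand side to be at least $\frac{1+\eta_2}{2}tF_k$. This holds as soon as $C_f t^{1+\nu}D^{2+\nu}\leq \frac{1-\eta_2}{2}F_k$, i.e.
\begin{equation*}
t\leq \Bigl(\frac{(1-\eta_2)F_k}{2C_fD^{2+\nu}}\Bigr)^{\alpha}.
\end{equation*}

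\emph{Choice of $t$.} Take $t:=\min\{\Delta/D,\ \beta F_k^\alpha/D\}$. Then three conditions hold.

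First, $t\leq1$, since $\beta F_k^\alpha/D\leq (F_k/F_0)^\alpha\leq 1$ by the first entry of $\beta$ in \eqref{eq:defn_beta_C} and monotonicity $F_k\leq F_0$.

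Second, $td$ is feasible for \eqref{eq:fcd_thm_subproblem}, since $\|td\|\leq tD\leq\Delta$.

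Third, the second entry of $\beta$ gives
\begin{equation*}
t\leq \Bigl(\tfrac{1-\eta_2}{2C_fD}\Bigr)^{\alpha}F_k^\alpha D^{-1}=\Bigl(\tfrac{(1-\eta_2)F_k}{2C_fD^{2+\nu}}\Bigr)^{\alpha},
\end{equation*}
using $D^{-1}=(D^{-(1+\nu)})^{\alpha}$. This is exactly the smallness condition from the previous step.

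\emph{Conclusion.} Since $s_k(\Delta)$ is a global minimizer of $m_k$ over the ball and $td$ is feasible,
\begin{equation*}
m_k(0)-m_k(s_k(\Delta))\geq m_k(0)-m_k(td)\geq \tfrac{1+\eta_2}{2}tF_k=\tfrac{(1+\eta_2)F_k}{2D}\min\{\beta F_k^\alpha,\Delta\},
\end{equation*}
which is \eqref{eq:md_alpha}.

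The only delicate point is the exponent bookkeeping that shows the $\beta$-cap enforces the smallness condition, together with the observation that the first entry of $\beta$ keeps $t\leq1$, so that the convexity interpolation is legitimate. Beyond that, the argument is routine.
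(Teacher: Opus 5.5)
Your proof is correct and is essentially the paper's argument. You compare $s_k(\Delta)$ with the feasible step $t\,(x_k^\star-x_k)$ toward the projection of $x_k$ onto $X_\star$, with $t=\min\{\beta F_k^\alpha,\Delta\}/D$, and combine convexity along the segment with the H\"older remainder bound of \Cref{lem:model_approximation}. The only difference is presentational: you first isolate the smallness condition on $t$ and then check that the second entry of $\beta$ enforces it, whereas the paper bounds $C_f\|\hat s\|^{2+\nu}\leq C_f D\beta^{1+\nu}F_k t\leq \frac{1-\eta_2}{2}F_k t$ directly.
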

\begin{proof}
Let $x_k$ be the current iterate. Choose $x_k^\star = \mathrm{proj}_{X_\star}(x_k) \in X_\star$. Note that the projection is well defined as by convexity and continuity of $f$, the solution set $X_\star$ is closed and convex. Now consider the residual
\begin{equation*}
d_k^\star:=x_k^\star-x_k.
\end{equation*}
As clearly $x_k,x_k^\star \in \mathcal{L}_0 = \{x: f(x) \leq f(x_0)\}$, by assumption \eqref{eq:assumption_sublevel}, $\|d_k^\star\| = \mathrm{dist}(x_k,X_\star)\leq D$. We define
\begin{equation*}
t:=\min\left\{\frac{\beta F_k^\alpha}{D},\ \frac{\Delta}{D}\right\},
\qquad
\hat s:= t\,d_k^\star.    
\end{equation*}
Since $F_k\leq F_0$ and $\beta\leq D/F_0^\alpha$ we deduce
\begin{equation*}
0\leq t\leq \frac{\beta F_k^\alpha}{D}\leq \frac{\beta F_0^\alpha}{D}\leq 1.    
\end{equation*}
Also, since $t\leq \Delta/D$ and $\|d_k^\star\|\leq D$, we have
\begin{equation*}
\|\hat s\|=t\|d_k^\star\|\le t D \leq \Delta.
\end{equation*}
Hence $\hat s$ is a feasible point for the trust-region subproblem \eqref{eq:fcd_thm_subproblem} and consequently
\begin{equation}
    m_k(s_k(\Delta))\leq m_k(\hat s). \label{eq:fcd_thm_min_s_hat}
\end{equation}
Now note that $x_k+\hat s=(1-t)x_k+t x_k^\star$, so by convexity of $f$ and $t \in [0,1]$,
\begin{equation}
f(x_k+\hat s)\leq (1-t)f(x_k)+ t f(x_k^\star) = f(x_k) - t F_k.
\label{eq:fcd_thm_convex_t}
\end{equation}
Using \Cref{lem:model_approximation},
\begin{equation*}
m_k(\hat s)\le f(x_k + \hat s)+C_f \|\hat s\|^{2+\nu},   
\end{equation*}
which in combination with \eqref{eq:fcd_thm_convex_t} gives
\begin{equation}
f(x_k) - m_k(\hat s)\geq t F_k-C_f \| \hat s\|^{2+\nu}. \label{eq:fcd_thm_descent_1}
\end{equation}
Since $\|\hat s\|\leq tD$ and $t\leq \beta F_k^\alpha/D$ by construction,
\begin{equation*}
C_f \|\hat s\|^{2+\nu}\leq C_f D^{2+\nu} t^{2+\nu} =
C_f D^{1+\nu} t^{1+\nu}Dt
\leq
C_f D \beta^{1+\nu} F_k\, t,
\end{equation*}
where we used $\alpha(1+\nu)=1$ in the last step. By the definition of $\beta$,
\begin{equation*}
C_f D \beta^{1+\nu}\leq \frac{1-\eta_2}{2},
\end{equation*}
which eventually leads to
\begin{equation}
C_f \|\hat s\|^{2+\nu}\leq \left(\frac{1-\eta_2}{2} \right) F_k t. 
\label{eq:fcd_thm_descent_2}
\end{equation}
Combining the above estimates with \eqref{eq:fcd_thm_descent_1}, we obtain the final result:
\begin{equation*}
 m_k(0) - m_k(s_k(\Delta)) \overset{\eqref{eq:fcd_thm_min_s_hat}}{\geq}
 m_k(0) - m_k(\hat s) \overset{\eqref{eq:fcd_thm_descent_1},\eqref{eq:fcd_thm_descent_2}}{\geq}\left(
 \frac{1+\eta_2}{2}\right) F_k t = \left(\frac{1+\eta_2}{2} \right)F_k \min\left\{\frac{\beta F_k^\alpha}{D},\ \frac{\Delta}{D}\right\}.
\end{equation*}
In the last equation we simply used the definition of $t$. 
\end{proof}
\begin{remark}\label{rem:testability_of_fcd}
Note that, while useful in theory, in practical applications the condition \eqref{eq:md_alpha} is not directly checkable unless $f_\star$ is known a priori.

In order to obtain a checkable descent condition in the setting of \Cref{lem:convex_model_decrease_alpha}, a natural first attempt would be to use the inequality
\begin{equation*}
F_k \leq D \|\nabla f(x_k)\|,
\end{equation*}
which follows from convexity and Assumption \eqref{eq:assumption_sublevel}. One would then try to formulate a checkable decrease condition, in the spirit of the classical Cauchy decrease as in \Cref{lem:normal_cauchy_decrease}, involving the gradient:
\begin{equation*}
m_k(0)-m_k(s_k(\Delta)) \geq \kappa_1 \|\nabla f(x_k)\| \min\left \{\kappa_2 \|\nabla f(x_k)\|^{\alpha},\Delta \right \}
\end{equation*}
for some constants $\kappa_1,\kappa_2 >0$. However, it is easy to check that such a condition, for arbitrary $\Delta>0$, can hold only when $\alpha=1$. The simple example $f(x) = \frac{1}{2}x^2$ already reveals this obstruction.
\end{remark}

\subsection{Main theorem}
We will now present our main theorem which will enable us to prove universal iteration complexity bounds for \Cref{alg:abstract_tr_algorithm}.

\begin{theorem}[Abstract convex complexity theorem]\label{thm:main_abstract_convex_result}
Let \Cref{ass:convex_assumptions} hold true. Again denote by $s_k^\star = s_k(\Delta_k) \in H$ the global solution of the subproblem \eqref{eq:fcd_thm_subproblem}. Moreover let $\alpha,\beta>0$ be the constants in \eqref{eq:def_C_f_C_g_alpha}, \eqref{eq:defn_beta_C}. Also define the hitting index
\begin{equation*}
    k_{\mathrm{cvx}}(\varepsilon):= \min \{k \in\mathbb N: f(x_{k})- f_\star \leq \varepsilon\}.
\end{equation*}
Assume that the following conditions hold:
\begin{itemize}
    \item[(i)] There exists a $\kappa_{\mathrm{cvx}}\in(0,1]$ such that
    \begin{equation}
        m_k(0) - m_k(s_k) \geq \kappa_{\mathrm{cvx}}(m_k(0) - m_k(s_k^\star)) \qquad \text{for all $k \in \mathbb N$}.\label{eq:cvx_abstract_thm_fractional_model_descent}
    \end{equation}
    \item[(ii)] There exists a constant $\beta_{\mathrm{cvx}} \in (0,\beta])$, where 
$\beta>0$ is defined in \eqref{eq:defn_beta_C}, such that
\begin{equation}
    \Delta_k \leq \beta_{\mathrm{cvx}} F_k^\alpha 
    \quad \Longrightarrow \quad 
    \rho_k^\vartheta \geq \eta_2 \qquad \text{for all $k \in \mathbb N$}. \label{eq:cvx_abstract_thm_lower_bound_radii} 
\end{equation}
Equivalently, sufficiently small radii yield very successful iterations.
\end{itemize}
In addition define the constants
\begin{equation}
\mu_{\mathrm{cvx}}:= \min\left\{
        \gamma_1\beta_{\mathrm{cvx}},
        \frac{\Delta_0}{F_0^\alpha}
    \right\}, \qquad 
    C_{\mathrm{cvx}}
    :=
    \frac{\eta_1(1+\eta_2) \kappa_{\mathrm{cvx}}\mu_{\mathrm{cvx}}}{2D}. \label{eq:cvx_abstract_thm_def_mu_C}
\end{equation}
Then the following iteration complexity bound holds:
\begin{equation*}
    k_{\mathrm{cvx}}(\varepsilon) 
    \leq
    \left(
        1+\frac{\log(\gamma_3)}{|\log(\gamma_2)|}
    \right)
    \left(
        \frac{1}{\alpha C_{\mathrm{cvx}}\varepsilon^{1/(1+ \nu)}}
    \right)
    +
    \frac{1}{|\log(\gamma_2)|} 
    \log_+\left(
        \frac{\Delta_0 }{\mu_{\mathrm{cvx}}\,\varepsilon^{1/(1+ \nu)}} 
    \right) + 1 =  \mathcal{O}\left( \varepsilon^{-\frac{1}{1+\nu}} \right) ,
\end{equation*}
as $\varepsilon \downarrow 0$.
\end{theorem}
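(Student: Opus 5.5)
We argue in three steps. Step one gives a lower bound on the radii, step two gives a per-success decrease of the function gap, and step three converts that decrease into a count of successful iterations, which we then turn into a total count via \Cref{thm:relation_successful_unsuccessful}.

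\emph{Step 1: radius lower bound.} We show by induction that
\[
\Delta_k \geq \mu_{\mathrm{cvx}} F_k^\alpha \qquad \text{for all } k.
\]
For $k=0$ this holds because $\mu_{\mathrm{cvx}}\le \Delta_0/F_0^\alpha$. For the induction step there are two cases.
\begin{itemize}
\item If $\Delta_k \le \beta_{\mathrm{cvx}}F_k^\alpha$, then condition (ii) makes iteration $k$ very successful. Hence $\Delta_{k+1}\ge \Delta_k\ge \mu_{\mathrm{cvx}}F_k^\alpha\ge \mu_{\mathrm{cvx}}F_{k+1}^\alpha$, using the monotonicity from \Cref{lem:well-definedness}.
\item Otherwise $\Delta_k>\beta_{\mathrm{cvx}}F_k^\alpha$. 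Every radius update satisfies $\Delta_{k+1}\ge\gamma_1\Delta_k$, so $\Delta_{k+1}>\gamma_1\beta_{\mathrm{cvx}}F_k^\alpha\ge \mu_{\mathrm{cvx}}F_{k+1}^\alpha$.
\end{itemize}

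\emph{Step 2: decrease on successful iterations.} Let $k\in\mathcal S$. Since $\vartheta\ge0$, the denominator of $\rho_k^\vartheta$ is at least $p_k(s_k)$, so
\[
F_k-F_{k+1}\ge \eta_1 p_k(s_k)\ge \eta_1\kappa_{\mathrm{cvx}}\,p_k(s_k^\star)
\]
by condition (i). Applying \eqref{eq:md_alpha} with $\Delta=\Delta_k$, and using $\beta\ge\beta_{\mathrm{cvx}}\ge\mu_{\mathrm{cvx}}$ together with Step 1, we get $\min\{\beta F_k^\alpha,\Delta_k\}\ge \mu_{\mathrm{cvx}}F_k^\alpha$. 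Therefore
\[
F_k-F_{k+1}\ge C_{\mathrm{cvx}}F_k^{1+\alpha}.
\]

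\emph{Step 3: counting successful iterations.} We use the standard potential $F^{-\alpha}$. By convexity of $x\mapsto x^{-\alpha}$ (or by the mean value theorem), and since $F_{k+1}\le F_k$,
\[
F_{k+1}^{-\alpha}-F_k^{-\alpha}\ge \alpha F_k^{-\alpha-1}(F_k-F_{k+1})\ge \alpha C_{\mathrm{cvx}}.
\]
On unsuccessful iterations $F$ is unchanged. Summing over the successful iterations gives $F_k^{-\alpha}\ge \alpha C_{\mathrm{cvx}}|\mathcal S_{k-1}|$. Hence as long as $F_{k}>\varepsilon$ we have
\[
|\mathcal S_{k-1}|<\frac{\varepsilon^{-\alpha}}{\alpha C_{\mathrm{cvx}}}, \qquad \varepsilon^{-\alpha}=\varepsilon^{-1/(1+\nu)}.
\]

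\emph{Conclusion.} Set $k=k_{\mathrm{cvx}}(\varepsilon)-1$; we may assume $k\ge1$, since otherwise the bound is trivial. Then $F_k>\varepsilon$, and Step 1 gives $\Delta_k\ge \mu_{\mathrm{cvx}}\varepsilon^{\alpha}=:\Delta_\varepsilon$. Applying \Cref{thm:relation_successful_unsuccessful} at index $k$ with this $\Delta_\varepsilon$ and the bound on $|\mathcal S_{k-1}|$ yields the stated estimate; the trailing $+1$ accounts for the shift from $k$ to $k_{\mathrm{cvx}}(\varepsilon)$.

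The main obstacle is the induction in Step 1. The case split must be arranged so that the monotonicity of $F_k$ absorbs the change from $F_k$ to $F_{k+1}$. Moreover, the case $\Delta_k\le \beta_{\mathrm{cvx}}F_k^\alpha$ must use condition (ii) to exclude radius shrinkage, while in the other case the worst-case contraction $\gamma_1$ is what is controlled. The remaining steps are routine once the radius bound is in place.
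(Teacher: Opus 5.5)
Your proposal is correct and follows the paper's proof essentially step for step. It uses the same induction $\Delta_k\geq\mu_{\mathrm{cvx}}F_k^\alpha$: you split on the radius and invoke (ii) directly, while the paper splits on $\rho_k^\vartheta$ and uses the contrapositive. It also uses the same per-success decrease $F_k-F_{k+1}\geq C_{\mathrm{cvx}}F_k^{1+\alpha}$, the same $F^{-\alpha}$ potential (via convexity of $x^{-\alpha}$ rather than concavity of $x^\alpha$), and the same final appeal to \Cref{thm:relation_successful_unsuccessful} at $k=k_{\mathrm{cvx}}(\varepsilon)-1$.

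The only point you leave implicit is that $k_{\mathrm{cvx}}(\varepsilon)<\infty$, which the paper checks separately. Your estimates already give it, since they bound every $k\geq 1$ with $F_k>\varepsilon$.
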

We divide the proof into several lemmas. The key step in establishing a convergence rate on the subset of successful iterations is to lower bound the accepted radius $\Delta_k$ so that the first argument in $\min(\cdot,\cdot)$ becomes active in \eqref{eq:md_alpha}. This is achieved by the following lemma.

\begin{lemma}[Lower bound on the trust-region radii in the convex regime]
\label{lem:cvx_abstract_thm_lower_bound_radii}
Let the assumptions of \Cref{thm:main_abstract_convex_result} hold and assume that there is a $\beta_{\mathrm{cvx}}\leq \beta$, with $\alpha,\beta>0$ defined in \eqref{eq:def_C_f_C_g_alpha} and  \eqref{eq:defn_beta_C}. Then we have the following lower bound:
\begin{equation}
    \Delta_k
    \geq
     \min\left\{
        \gamma_1\beta_{\mathrm{cvx}},
        \frac{\Delta_0}{F_0^\alpha}
    \right\} F_k^{\alpha} \qquad \text{for all $k \in \mathbb N$.}
    \label{eq:accepted_radius_lower_bound_abstract}
\end{equation}
\end{lemma}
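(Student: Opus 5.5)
We argue by induction on $k$, writing $\mu_{\mathrm{cvx}}:=\min\{\gamma_1\beta_{\mathrm{cvx}},\Delta_0/F_0^\alpha\}$. The induction hypothesis is $\Delta_k\geq \mu_{\mathrm{cvx}}F_k^\alpha$.

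The base case $k=0$ is immediate: $\mu_{\mathrm{cvx}}F_0^\alpha\leq (\Delta_0/F_0^\alpha)F_0^\alpha=\Delta_0$.

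For the inductive step, assume the bound holds at $k$. We use two facts. First, $F_{k+1}\leq F_k$ by the monotonicity in \Cref{lem:well-definedness}(ii). Second, $t\mapsto t^\alpha$ is nondecreasing since $\alpha>0$. We then split into two cases according to the size of $\Delta_k$ relative to the threshold in hypothesis (ii) of \Cref{thm:main_abstract_convex_result}.

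\emph{Case A: $\Delta_k\leq \beta_{\mathrm{cvx}}F_k^\alpha$.}
\begin{itemize}
\item By \eqref{eq:cvx_abstract_thm_lower_bound_radii}, iteration $k$ is very successful, so $\rho_k^\vartheta\geq\eta_2$.
\item The radius update then gives $\Delta_{k+1}\geq\Delta_k$.
\item Combining with the induction hypothesis, $\Delta_{k+1}\geq \mu_{\mathrm{cvx}}F_k^\alpha\geq \mu_{\mathrm{cvx}}F_{k+1}^\alpha$.
\end{itemize}

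\emph{Case B: $\Delta_k> \beta_{\mathrm{cvx}}F_k^\alpha$.}
\begin{itemize}
\item In every branch of the radius update, $\Delta_{k+1}\geq\gamma_1\Delta_k$, since $\gamma_1\leq\gamma_2<1\leq\gamma_3$.
\item Hence $\Delta_{k+1}> \gamma_1\beta_{\mathrm{cvx}}F_k^\alpha\geq \mu_{\mathrm{cvx}}F_k^\alpha\geq\mu_{\mathrm{cvx}}F_{k+1}^\alpha$.
\end{itemize}
This closes the induction.

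The one point needing care is the monotonicity $F_{k+1}\leq F_k$, since $F_{k+1}$ may differ from $F_k$ only on successful steps. It is supplied by \Cref{lem:well-definedness}(ii). That lemma requires only a well-defined, positive ratio denominator, which holds under the standing convention $\|g_k\|>0$. On unsuccessful steps $x_{k+1}=x_k$, so $F_{k+1}=F_k$ and the inequality is trivial.

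The hypothesis $\beta_{\mathrm{cvx}}\leq\beta$ plays no role in this argument. It only guarantees consistency with \eqref{eq:md_alpha} later in the proof of \Cref{thm:main_abstract_convex_result}.
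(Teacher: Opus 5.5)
Your proof is correct and follows the same induction as the paper. The only difference is bookkeeping: the paper splits on whether $\rho_k^\vartheta\geq\eta_2$ and uses the contrapositive of hypothesis (ii) in the non-very-successful case, whereas you split on whether $\Delta_k\leq\beta_{\mathrm{cvx}}F_k^\alpha$; your remark that $\beta_{\mathrm{cvx}}\leq\beta$ is unused also matches the paper's remark after the proof.
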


\begin{proof}
    We prove \eqref{eq:accepted_radius_lower_bound_abstract} by induction over $k \in \mathbb N$. If $k=0$ then we have automatically
\begin{equation*}
    \Delta_0 = \frac{\Delta_0}{F_0^\alpha} F_0^\alpha \geq \min\left\{
        \gamma_1\beta_{\mathrm{cvx}},
        \frac{\Delta_0}{F_0^\alpha}
    \right\} F_0^{\alpha},
\end{equation*}
which is \eqref{eq:accepted_radius_lower_bound_abstract} for $k=0$. We now assume that \eqref{eq:accepted_radius_lower_bound_abstract} holds true for some $k \in \mathbb N$. In order to prove \eqref{eq:accepted_radius_lower_bound_abstract}  for the next iteration $(k + 1)$, we distinguish two cases. \\[0.2cm]
\emph{Case 1:} $\rho_k^\vartheta < \eta_2$. This means that iteration \(k\) is not very successful. Then by assumption \eqref{eq:cvx_abstract_thm_lower_bound_radii}, we conclude $\Delta_k > \beta_{\mathrm{cvx}} F_k^\alpha$. Therefore the radius-update mechanism gives
\begin{equation*}
\Delta_{k+1} \geq \gamma_1 \Delta_k >  \gamma_1 \beta_{\mathrm{cvx}} F_k^\alpha 
\geq
\min\left\{
        \gamma_1\beta_{\mathrm{cvx}},
        \frac{\Delta_0}{F_0^\alpha}
    \right\} F_k^{\alpha} 
    \geq
    \min\left\{
        \gamma_1\beta_{\mathrm{cvx}},
        \frac{\Delta_0}{F_0^\alpha}
    \right\} F_{k+1}^{\alpha},
\end{equation*}
where we used that $F_{k+1} \leq F_k$ in the last inequality. This proves \eqref{eq:accepted_radius_lower_bound_abstract} for $(k+1) $. \\[0.2cm]
\emph{Case 2:}  $\rho_k^\vartheta \geq \eta_2$, so the iteration $k$ is very successful. Hence the radius update mechanism again gives in this case
\begin{equation*}
\Delta_{k+1} \geq \Delta_k  \overset{\rm (IH)}\geq
\min\left\{
        \gamma_1\beta_{\mathrm{cvx}},
        \frac{\Delta_0}{F_0^\alpha}
    \right\} F_k^{\alpha} 
    \geq
    \min\left\{
        \gamma_1\beta_{\mathrm{cvx}},
        \frac{\Delta_0}{F_0^\alpha}
    \right\} F_{k+1}^{\alpha},
\end{equation*}
where (IH) is the induction hypothesis and in the last inequality we again used $F_{k+1} \leq F_k$. Combining both cases, we conclude that the statement of the theorem is true for all $k\in \mathbb N$.
\end{proof}
\begin{remark}The proof above does not use the fact that $\beta_{\mathrm{cvx}} \leq \beta$. All steps are valid without this restriction.    
\end{remark}
The next theorem establishes the desired convergence rate on the set of successful iterations.
The main argument in the proof of the following theorem is standard in the analysis of convex first-order methods; see, e.g., \cite{nesterov2018lectures}.

\begin{lemma}[Rate on successful steps]
\label{lem:successful_rate_abstract}
Let the assumptions of \Cref{thm:main_abstract_convex_result} hold. Recall the constants
\begin{equation*}
\mu_{\mathrm{cvx}}= \min\left\{
        \gamma_1\beta_{\mathrm{cvx}},
        \frac{\Delta_0}{F_0^\alpha}
    \right\}, \qquad 
    C_{\mathrm{cvx}}
    =
    \frac{\eta_1(1+\eta_2) \kappa_{\mathrm{cvx}}\mu_{\mathrm{cvx}}}{2D}.
\end{equation*}
Then the following inequality holds for all $k \geq 1$:
\begin{equation}
    f(x_{k}) - f_\star \leq \frac{1}{(F_0^{-\alpha}+\alpha C_{\mathrm{cvx}} |\mathcal{S}_{k-1}|)^{1/\alpha}}. \label{eq:convex_abstract_rate_successful}
\end{equation}
\end{lemma}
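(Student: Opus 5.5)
On each successful iteration I will derive a recursive decrease inequality for the function gap $F_k$. Then I will turn it into a bound on $F_k^{-\alpha}$ that grows linearly in the number of successful steps. This is the standard argument for convex methods with rate exponent $1/\alpha=1+\nu$.

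\emph{Step 1 (decrease per successful step).} Fix $j\in\mathcal S$. By the acceptance test, and since the $\vartheta$-term in the denominator of \eqref{eq:rho_cat_def} is nonnegative,
\[
F_j-F_{j+1}=f(x_j)-f(x_{j+1})\geq \eta_1\, p_j(s_j)\geq \eta_1\kappa_{\mathrm{cvx}}\bigl(m_j(0)-m_j(s_j^\star)\bigr),
\]
where the last inequality is condition (i) of \Cref{thm:main_abstract_convex_result}. Next I apply \Cref{lem:convex_model_decrease_alpha} with $\Delta=\Delta_j$. The first argument of the minimum is $\beta F_j^\alpha$. By \Cref{lem:cvx_abstract_thm_lower_bound_radii}, $\Delta_j\geq \mu_{\mathrm{cvx}}F_j^\alpha$. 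Also $\mu_{\mathrm{cvx}}\leq\gamma_1\beta_{\mathrm{cvx}}\leq\beta$, because $\gamma_1<1$ and $\beta_{\mathrm{cvx}}\le\beta$. Hence $\min\{\beta F_j^\alpha,\Delta_j\}\geq \mu_{\mathrm{cvx}}F_j^\alpha$, and therefore
\[
F_j-F_{j+1}\geq C_{\mathrm{cvx}}F_j^{1+\alpha}.
\]
On unsuccessful iterations $F_{j+1}=F_j$.

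\emph{Step 2 (linearization of $F^{-\alpha}$).} This is the only genuinely analytic step. I will show that for $j\in\mathcal S$,
\[
F_{j+1}^{-\alpha}-F_j^{-\alpha}\geq \alpha C_{\mathrm{cvx}}.
\]
The function $\phi(t)=t^{-\alpha}$ is convex and decreasing, so $\phi(F_{j+1})-\phi(F_j)\geq \alpha F_j^{-\alpha-1}(F_j-F_{j+1})$, using $F_{j+1}\le F_j$ and the tangent line at $F_j$. Inserting Step 1 gives $\alpha F_j^{-\alpha-1}C_{\mathrm{cvx}}F_j^{1+\alpha}=\alpha C_{\mathrm{cvx}}$. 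The standing convention guarantees $F_{j+1}>0$, so every quantity here is defined. The one subtlety is the direction of the tangent inequality: one must evaluate at the larger point $F_j$, and convexity gives exactly $\phi(y)\ge\phi(x)+\phi'(x)(y-x)$ with $\phi'(x)<0$ and $y-x\le 0$.

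\emph{Step 3 (telescoping).} I sum over $j=0,\dots,k-1$. Successful indices each contribute at least $\alpha C_{\mathrm{cvx}}$, and unsuccessful indices contribute zero. This gives $F_k^{-\alpha}\geq F_0^{-\alpha}+\alpha C_{\mathrm{cvx}}|\mathcal S_{k-1}|$. Raising both sides to the power $-1/\alpha$ yields \eqref{eq:convex_abstract_rate_successful}.
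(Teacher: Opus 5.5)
Your proposal is correct and matches the paper's proof: you derive the same per-step decrease $F_j-F_{j+1}\geq C_{\mathrm{cvx}}F_j^{1+\alpha}$ from condition (i), \eqref{eq:md_alpha} and the radius lower bound, then linearize $F^{-\alpha}$ and telescope over $\mathcal{S}_{k-1}$. The only cosmetic difference is in the linearization. You use the tangent inequality for the convex map $t\mapsto t^{-\alpha}$ at $F_j$, whereas the paper uses concavity of $t\mapsto t^{\alpha}$ and then $F_{j+1}\leq F_j$ to drop the factor $F_j^{\alpha}/F_{j+1}^{\alpha}\geq 1$.
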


\begin{proof}
By the fractional model decrease \eqref{eq:cvx_abstract_thm_fractional_model_descent} we obtain for all $n \in \mathbb N$,
\begin{equation*}
    m_{n}(0)-m_{n}(s_{n})
    \overset{\eqref{eq:cvx_abstract_thm_fractional_model_descent}}{\geq} 
    \kappa_{\mathrm{cvx}}\left(
    m_{n}(0)-m_{n}(s_{n}^\star) \right)
    \overset{\eqref{eq:md_alpha}}{\geq }
   \frac{\kappa_{\mathrm{cvx}}(1+\eta_2) F_n}{2D}
    \min\{\beta_{\mathrm{cvx}} F_n^\alpha , \Delta_n\},
\end{equation*}
where we also used $\beta \geq \beta_{\mathrm{cvx}}$ in the last inequality and that $s_{n}^\star$ solves the subproblem \eqref{eq:fcd_thm_subproblem}. By \Cref{lem:cvx_abstract_thm_lower_bound_radii}, we also have 
\begin{equation}
    \Delta_n
    \geq
    \min\left\{
        \gamma_1\beta_{\mathrm{cvx}},
        \frac{\Delta_0}{F_0^\alpha}
    \right\}F_n^\alpha = \mu_{\mathrm{cvx}} F_n^\alpha \qquad \text{for all $n \in \mathbb N$. } \label{eq:successful_abstract_1}
\end{equation}
 Since $\gamma_1\beta_{\mathrm{cvx}}\leq \beta_{\mathrm{cvx}}$ we infer using  monotonicity of $\min(\cdot,\Delta_n)$, that
\begin{align}
    m_{n}(0)-m_{n}(s_{n}) 
    &\geq \left(
    \frac{\kappa_{\mathrm{cvx}}(1+\eta_2) F_n}{2D}\right)
    \min \left \{\mu_{\mathrm{cvx}}F_n^\alpha , \Delta_n \right \} 
    \overset{\eqref{eq:successful_abstract_1}}{=}
    \left(
     \frac{\kappa_{\mathrm{cvx}}(1+\eta_2)\mu_{\mathrm{cvx}}}{2D} \right)F_n^{1+\alpha} \quad \text{for $n \in \mathbb N.$}
\label{eq:successful_abstract_11}
\end{align}
Now for successful indices $n\in  \mathcal{S}$ we have $f(x_{n})-f(x_{n}+s_{n}) \geq
    \eta_1 (m_{n}(0)-m_{n}(s_{n}))$ and therefore
\begin{align}
    F_n-F_{n+1} =
    f(x_{n})-f(x_{n}+s_{n}) \geq
    \eta_1 (m_{n}(0)-m_{n}(s_{n}))
\overset{\eqref{eq:successful_abstract_11}}{\geq}
    \left(
    \frac{\eta_1\kappa_{\mathrm{cvx}}(1+\eta_2)\mu_{\mathrm{cvx}}}{ 2D} \right)   F_n^{1+\alpha} \qquad \text{for all $n \in \mathcal{S}$}.\label{eq:inexact_successful_decrease}
\end{align}
For $x,y\geq 0$ and $\alpha \in (0,1]$ we have $x^\alpha - y^\alpha \geq \alpha x^{\alpha-1} (x-y)$ by concavity of $x \mapsto x^\alpha$. Also recall that without loss of generality, we may assume that $F_{n}>0$ for all $n \in \mathbb N$. These two facts imply
\begin{align}
\frac{1}{F_{{n+1}}^\alpha} - \frac{1}{F_{{n}}^\alpha} 
=
\frac{F_{n}^\alpha-F_{{n+1}}^\alpha}{F_{{n}}^\alpha F_{n+1}^\alpha} 
\geq
\frac{\alpha (F_{n} - F_{n+1}
)}{F_{n}F_{n+1}^{\alpha}} 
\overset{\eqref{eq:inexact_successful_decrease}}{\geq} 
\frac{\alpha C_{\mathrm{cvx}} F_{{n}}^{1+\alpha}}{F_n F_{{n+1}}^\alpha} \geq \alpha C_{\mathrm{cvx}} \qquad \text{for all $n \in \mathcal{S}$}, \label{eq:successful_compl_1_inexact}
\end{align}
where we used $F_{n+1} \leq F_n$ in the last inequality. Summing \eqref{eq:successful_compl_1_inexact} from $n=0,\ldots,k - 1$ yields
\begin{equation}
\frac{1}{F_{{k}}^\alpha} - \frac{1}{F_{{0}}^\alpha} = 
    \sum_{n = 0}^{k-1}\left(\frac{1}{F_{{n+1}}^\alpha} - \frac{1}{F_{{n}}^\alpha}  \right) \overset{(a)}{=} \sum_{n \in  \mathcal{S}_{k-1}}\left(\frac{1}{F_{{n+1}}^\alpha} - \frac{1}{F_{{n}}^\alpha}  \right) \overset{\eqref{eq:successful_compl_1_inexact}}{\geq} \alpha C_{\mathrm{cvx}} |\mathcal{S}_{k-1}|,  \qquad (k\geq 1),\label{eq:successful_compl_2_inexact}
\end{equation}
where we used in $(a)$ that $x_{n+1} = x_n$ for $n \notin \mathcal{S}$. Rearranging \eqref{eq:successful_compl_2_inexact} gives eventually
\begin{equation*}
    f(x_{k}) - f_\star \leq
    \frac{1}{(F_0^{-\alpha}+\alpha C_{\mathrm{cvx}} |\mathcal{S}_{k-1}|)^{1/\alpha}}, \qquad (k\geq 1),
\end{equation*}
which proves the theorem.
\end{proof}

We can now give a simple proof of the main theorem
\begin{proof}[Proof of \Cref{thm:main_abstract_convex_result}]
First note, that due to \Cref{lem:successful_rate_abstract} and the standing convention in \Cref{par:standing_convention}, we have $F_k \to 0$ as $k\to \infty$. Thus $k_{\mathrm{cvx}}(\varepsilon) < + \infty$ for every $\varepsilon>0$. 
Second, for $k_{\mathrm{cvx}}(\varepsilon) \leq 1$ the statement of the theorem is true, as the right hand side of the complexity inequality is strictly larger than $1$ for any $\varepsilon>0$. Hence assume throughout the proof that $k_{\mathrm{cvx}}(\varepsilon)\geq 2$. We also define
\begin{equation*}
    K(\varepsilon):= k_{\mathrm{cvx}}(\varepsilon)-1, \qquad 
    \mathcal{S}(\varepsilon):= \mathcal{S}_{K_{}(\varepsilon)-1}.
\end{equation*}
Both quantities are well-defined as $K(\varepsilon)\geq 1$. We then may apply \Cref{lem:successful_rate_abstract} with $k = K_{}(\varepsilon)\geq 1$ and obtain by the minimality of $k_{\mathrm{cvx}}(\varepsilon)$ that 
\begin{equation*}
    \varepsilon \leq f(x_{K_{}(\varepsilon)}) - f_\star \overset{\eqref{eq:convex_abstract_rate_successful}}{\leq} \frac{1}{(F_0^{-\alpha}+\alpha C_{\mathrm{cvx}}|\mathcal{S}(\varepsilon)|)^{1/\alpha}}.
\end{equation*}
 Rearranging and estimating from above yields
\begin{equation*}
    |\mathcal{S}(\varepsilon)| \leq \frac{1}{\alpha C_{\mathrm{cvx}}\varepsilon^{\alpha}}.
\end{equation*}
Now, additionally from \Cref{lem:cvx_abstract_thm_lower_bound_radii} we obtain  
\begin{equation*}
     \mu_{\mathrm{cvx}}\varepsilon^{\alpha} \leq \mu_{\mathrm{cvx}} \left( f(x_{K_{}(\varepsilon)}) - f_\star \right)^\alpha \overset{\eqref{eq:accepted_radius_lower_bound_abstract}}{\leq} \Delta_{K_{}(\varepsilon)}.
\end{equation*}
Combining the previous two estimates with \Cref{thm:relation_successful_unsuccessful} in which we chose  $ k =  K_{}(\varepsilon)$ and 
$\Delta_{\varepsilon} = \mu_{\rm cvx} \varepsilon^\alpha$  we infer
\begin{equation*}
    K_{}(\varepsilon)
    \leq
    \left(
        1+\frac{\log(\gamma_3)}{|\log(\gamma_2)|}
    \right)
    \left(
        \frac{1}{\alpha C_{\mathrm{cvx}}\varepsilon^\alpha}
    \right)
    +
    \frac{1}{|\log(\gamma_2)|}
    \log_+ \left(
        \frac{\Delta_0 }{\mu_{\mathrm{cvx}}\,\varepsilon^\alpha}
    \right).
\end{equation*}
Now the statement of the theorem follows from the relation $k_{\mathrm{cvx}}(\varepsilon)= K(\varepsilon) + 1$.    
\end{proof}
\begin{remark}
    The proof of the statement that
    $k_{\mathrm{cvx}}(\varepsilon)<+\infty$ in
    \Cref{thm:main_abstract_convex_result} makes use of the standing convention in \Cref{par:standing_convention}, which ensures that
    $|\mathcal{S}|=+\infty$ if \Cref{alg:abstract_tr_algorithm} does not converge in finitely many iterations to a stationary point. The standing convention is
    justified by \Cref{thm:infinite_successful_set} for \Crefalgvariant{var:variant1}{}, and also 
    for \Crefalgvariant{var:variant2}{} if, in addition,
    $0 <\eta_1 < 1/(1+\vartheta/2)$.

    We emphasize that the statements of this section remain valid under the assumptions of  \Cref{thm:main_abstract_convex_result} without explicitly assuming the standing convention. Indeed, if \Cref{alg:abstract_tr_algorithm} does not converge in finitely many iterations, condition
    \eqref{eq:cvx_abstract_thm_lower_bound_radii} directly implies that $|\mathcal{S}|=+\infty$. To see this, suppose that $|\mathcal{S}|<+\infty$. Then there
    exist $K_0\in\mathbb N$, $\bar x\in H$, and
    $\bar F>0$ such that
    \[
        x_k=\bar x,
        \qquad
        F_k=\bar F
        \qquad
        \text{for all }k\geq K_0.
    \]
    Indeed, after the last successful iteration all subsequent iterations are unsuccessful and the iterate remains unchanged. The radius-update mechanism then yields $\Delta_k\to 0$. Hence, for all sufficiently
    large \(k\geq K_0\),
    \[
        \Delta_k
        \leq
        \beta_{\mathrm{cvx}}\bar F^\alpha
        =
        \beta_{\mathrm{cvx}}F_k^\alpha.
    \]
    Condition \eqref{eq:cvx_abstract_thm_lower_bound_radii} therefore
    implies $\rho_k^\vartheta\geq\eta_2\geq\eta_1$,
     so iteration \(k\) is successful. This is a contradiction.
\end{remark}
\section{Analysis of the inexact basic trust-region method}\label{sec:tr_complexity} 
In this section, we study complexity bounds for \Crefalgvariant{var:variant1}{} of \Cref{alg:abstract_tr_algorithm}, an inexact version of the basic trust-region method studied in \cite{conn2000trust,cartis2022evaluation}.

More precisely, we first recall the classical first-order complexity result in \Cref{thm:tr-nonconvex-complexity}, which is well known in the literature. We then prove a universal complexity bound in \Cref{thm:tr_convex_complexity}, which holds without prior knowledge of the parameter $\nu \in [0,1]$. Finally, in \Cref{thm:tr_local_fast_convergence}, we show that once the iterates of \Crefalgvariant{var:variant1}{} enter the local convergence regime, they converge at the same universal rate as the classical Newton method.

The uniform boundedness of the Hessian in \Cref{ass:bounded_hessian} is only needed for the nonconvex complexity result, \Cref{thm:tr-nonconvex-complexity}. We expect that this assumption can be relaxed.

We also note that the nonconvex and locally strongly convex cases have already been treated in the literature.

\subsection{First-order complexity in the nonconvex regime}\label{sec:tr_first_order_complexity}
For completeness we will quote here the first-order complexity guarantees for \Crefalgvariant{var:variant1}{} in the nonconvex regime, precisely under \Cref{ass:nonconvex_assumptions} and \Cref{ass:bounded_hessian}.
\begin{theorem}[First-order iteration complexity in the nonconvex regime]\label{thm:tr-nonconvex-complexity}
Suppose that \Cref{ass:nonconvex_assumptions} and \Cref{ass:bounded_hessian} hold true. And define the quantity
\begin{equation*}
    k_{\mathrm{TR},\mathrm{ncvx}}(\varepsilon):= \min \{ k \in \mathbb N: \| \nabla f(x_k)\| \leq \varepsilon \}.
\end{equation*}
Then there are constants $\kappa_1,\kappa_2,\kappa_3>0$ such that for every \(\varepsilon>0\) we have
\[
k_{\mathrm{TR},\mathrm{ncvx}}(\varepsilon)
\leq
\kappa_1\left ( \frac{f(x_0)-f_{\star}}{\varepsilon^2} \right )
+
\kappa_2|\log\varepsilon|
+
\kappa_3
=
\mathcal{O}(\varepsilon^{-2})
 \qquad (\varepsilon\downarrow0) \]
\end{theorem}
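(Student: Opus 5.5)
The plan is the classical trust-region argument of \cite[Theorem~3.2.1]{cartis2022evaluation}, adapted to the inexactness condition \eqref{eq:fractional_model_descent}. Fix $\varepsilon>0$ and suppose $\|g_k\|>\varepsilon$ for all $k<k_{\mathrm{TR},\mathrm{ncvx}}(\varepsilon)$. The first step is a Cauchy-type decrease for the computed step. Combining \eqref{eq:fractional_model_descent} with optimality of $s_k^\star$ and \Cref{lem:normal_cauchy_decrease}, and using $\|H_k\|_{\mathrm{op}}\le M_H$ from \Cref{ass:bounded_hessian}, we get
\[
p_k(s_k)\ \ge\ \frac{\kappa_C}{1+\bar\tau}\,\|g_k\|\min\Bigl\{\frac{\|g_k\|}{1+M_H},\Delta_k\Bigr\}.
\]

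The second step shows that small radii force very successful iterations. Here I would use the Lipschitz gradient \eqref{eq:Lip_cont_grad_f} rather than the H\"older bound, since it gives a remainder that is uniform in $\nu$. The mean value theorem yields $|f(x_k+s)-m_k(s)|\le M_H\|s\|^2$. Hence, whenever $\Delta_k\le \|g_k\|/(1+M_H)$,
\[
|\rho_k-1|\le \frac{M_H\Delta_k^2}{p_k(s_k)}\le \frac{(1+\bar\tau)M_H\Delta_k}{\kappa_C\|g_k\|}.
\]
Therefore there is a constant $c>0$ such that $\Delta_k\le c\|g_k\|$ implies $\rho_k\ge\eta_2$. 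By induction, exactly as in \Cref{lem:cvx_abstract_thm_lower_bound_radii}, we then obtain $\Delta_k\ge \min\{\gamma_1 c\,\varepsilon,\Delta_0\}=:\Delta_\varepsilon$ for all $k<k_{\mathrm{TR},\mathrm{ncvx}}(\varepsilon)$. The induction splits into two cases. If iteration $k$ is not very successful, then $\Delta_k>c\|g_k\|>c\varepsilon$, so $\Delta_{k+1}\ge\gamma_1 c\varepsilon$. If it is very successful, the radius does not decrease.

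The third step counts successful iterations. For each $k\in\mathcal S$ with $k<k_{\mathrm{TR},\mathrm{ncvx}}(\varepsilon)$,
\[
f(x_k)-f(x_{k+1})\ge \eta_1 p_k(s_k)\ge \frac{\eta_1\kappa_C}{1+\bar\tau}\,\varepsilon\min\Bigl\{\frac{\varepsilon}{1+M_H},\Delta_\varepsilon\Bigr\}\ge \kappa\,\varepsilon^2
\]
for small $\varepsilon$. Telescoping, using monotonicity from \Cref{lem:well-definedness} and $f\ge f_\star$, gives $|\mathcal S_{k-1}|\le (f(x_0)-f_\star)/(\kappa\varepsilon^2)$. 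Finally, \Cref{thm:relation_successful_unsuccessful} with $k=k_{\mathrm{TR},\mathrm{ncvx}}(\varepsilon)$ and the above $\Delta_\varepsilon$ produces the term $\log_+(\Delta_0/\Delta_\varepsilon)$, which is bounded by $\kappa_2|\log\varepsilon|+\mathrm{const}$. The edge cases $k_{\mathrm{TR},\mathrm{ncvx}}(\varepsilon)\in\{0\}$ and large $\varepsilon$ are absorbed into $\kappa_3$.

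The main obstacle is the second step: producing the "small radius implies very successful" threshold proportional to $\|g_k\|$ with constants independent of $\nu$. Using $M_H$ instead of $C_f$ resolves this. Finiteness of $k_{\mathrm{TR},\mathrm{ncvx}}(\varepsilon)$ follows from the same count together with \Cref{thm:infinite_successful_set}.
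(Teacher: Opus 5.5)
Your proposal is correct and follows essentially the paper's route: the paper simply invokes the classical Cartis--Gould--Toint first-order trust-region complexity theorem, noting that \Cref{ass:bounded_hessian} supplies the Lipschitz gradient it requires, and you have written out that argument explicitly, including the Cauchy-type decrease implied by \eqref{eq:fractional_model_descent} and the $M_H\|s\|^2$ model error. One small bookkeeping point is that your induction actually gives $\Delta_k\ge\Delta_\varepsilon$ for all $k\le k_{\mathrm{TR},\mathrm{ncvx}}(\varepsilon)$, since the last inductive step only uses $\|g_{k_{\mathrm{TR},\mathrm{ncvx}}(\varepsilon)-1}\|>\varepsilon$; this closed range is exactly what \Cref{thm:relation_successful_unsuccessful} needs at $k=k_{\mathrm{TR},\mathrm{ncvx}}(\varepsilon)$, so state it that way.
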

\begin{proof}
    The complexity bound is a simplified form of a result for first-order trust-region methods in \cite[Theorem~2.3.7]{cartis2022evaluation}. Note that for the proof in \cite{cartis2022evaluation} Lipschitz continuity of $\nabla f$ is required. However, this is given here under the present \Cref{ass:bounded_hessian}, see also  \eqref{eq:Lip_cont_grad_f}.
\end{proof}

\subsection{Universal complexity in the convex regime}\label{sec:tr_convex_complexity_basic}
We now turn to one of the main contributions of this article: the complexity analysis of the classical trust-region method under convexity. To apply \Cref{thm:main_abstract_convex_result}, in particular condition (ii), we need to show that, eventually, sufficiently small radii are always very successful. This is established by the following theorem.
\begin{lemma}[Small radii lead to very successful steps, \Crefalgvariant{var:variant1}{} ]\label{lem:small_radi_accepted_ver1}
Let \Cref{ass:nonconvex_assumptions} and \Cref{ass:convex_assumptions} hold. 
Consider a radius $\Delta > 0$ and the constants $\alpha, \beta>0$ from \eqref{eq:def_C_f_C_g_alpha} and \eqref{eq:defn_beta_C}. Also let again $s_k(\Delta)$ denote the corresponding global solution of the subproblem as in \eqref{eq:fcd_thm_subproblem}. For given $\tau \geq 0$ let $s_k \in H$ denote a point such that 
   \begin{align}
        \|s_k\| &\leq \Delta \notag\\
        m_k(0)-m_k(s_k)
        &\geq
        \left( \frac{1}{1+  \tau}\right)
        \left(
            m_k(0)-m_k(s_k(\Delta)) \label{eq:small_radi_eq1}
        \right).
\end{align}
Also define the constant
\begin{equation}
    \beta_{ \tau}
    :=
    \min\left\{
        \frac{D}{F_0^\alpha},
        \left(
            \frac{(1-\eta_2)}{(1+ \tau)2C_f D}
        \right)^\alpha
    \right\}.
    \label{eq:def_beta_tau}
\end{equation}
Then, if $\Delta \leq \beta_{ \tau} F_k^\alpha$ the corresponding trial is automatically very successful, i.e.
\begin{equation*}
\rho_k = \frac{f(x_k)-f(x_k+s_k)}{m_k(0)-m_k(s_k)}
  \geq \eta_2.
\end{equation*}
\end{lemma}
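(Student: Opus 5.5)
The argument compares $1-\rho_k$ with the model-approximation error and uses \Cref{lem:convex_model_decrease_alpha} to bound the predicted decrease from below. Since $\vartheta=0$, we write
\[
1-\rho_k=\frac{f(x_k+s_k)-m_k(s_k)}{m_k(0)-m_k(s_k)}
\leq \frac{C_f\|s_k\|^{2+\nu}}{m_k(0)-m_k(s_k)}
\leq \frac{C_f\Delta^{2+\nu}}{m_k(0)-m_k(s_k)},
\]
using \eqref{eq:eq_model_approx_func} and $\|s_k\|\leq\Delta$. It therefore suffices to show $m_k(0)-m_k(s_k)\geq C_f\Delta^{2+\nu}/(1-\eta_2)$.

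For the lower bound on the predicted decrease we combine \eqref{eq:small_radi_eq1} with \eqref{eq:md_alpha}:
\[
m_k(0)-m_k(s_k)\geq \frac{1}{1+\tau}\,\frac{(1+\eta_2)F_k}{2D}\min\{\beta F_k^\alpha,\Delta\}.
\]
The key observation is that $\beta_\tau\leq\beta$, because $1/(1+\tau)\leq 1$ and $\alpha>0$. Hence $\Delta\leq\beta_\tau F_k^\alpha\leq \beta F_k^\alpha$, the minimum equals $\Delta$, and we obtain
\[
m_k(0)-m_k(s_k)\geq\frac{(1+\eta_2)F_k\Delta}{2(1+\tau)D}.
\]

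It remains to check that
\[
C_f\Delta^{2+\nu}\leq (1-\eta_2)\,\frac{(1+\eta_2)F_k\Delta}{2(1+\tau)D}.
\]
Dividing by $\Delta$, this reduces to $C_f\Delta^{1+\nu}\leq (1-\eta_2)(1+\eta_2)F_k/(2(1+\tau)D)$. From $\Delta\leq\beta_\tau F_k^\alpha$ and $\alpha(1+\nu)=1$ we get
\[
\Delta^{1+\nu}\leq \beta_\tau^{1+\nu}F_k\leq \frac{(1-\eta_2)F_k}{2(1+\tau)C_fD},
\]
so $C_f\Delta^{1+\nu}\leq (1-\eta_2)F_k/(2(1+\tau)D)$. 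This is at most the required bound because $1+\eta_2\geq 1$. Thus $1-\rho_k\leq 1-\eta_2$, i.e.\ $\rho_k\geq\eta_2$.

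The main point requiring care is ensuring that the $\Delta$ branch of the minimum in \eqref{eq:md_alpha} is active, which follows from $\beta_\tau\leq\beta$, and tracking the exponent identity $\alpha(1+\nu)=1$. The denominator is positive by \Cref{lem:well-definedness}, and $F_k>0$ holds by the standing convention.
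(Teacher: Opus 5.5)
Your proof is correct and follows the paper's argument essentially step for step. You combine \eqref{eq:small_radi_eq1} with \eqref{eq:md_alpha}, using $\beta_\tau\le\beta$ so that the $\Delta$-branch of the minimum is active. You bound the model error by $C_f\Delta^{2+\nu}$, and then close with $\alpha(1+\nu)=1$ and the definition of $\beta_\tau$.

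The differences are cosmetic. You bound $1-\rho_k$ one-sidedly instead of $|1-\rho_k|$, and you drop the factor $1+\eta_2$ at the end rather than at the start. One small correction: \Cref{lem:well-definedness} concerns the algorithm's own radius $\Delta_k$, not an arbitrary $\Delta$, so it is not the right justification for a positive denominator. You do not need it anyway, because your own bound $\frac{(1+\eta_2)F_k\Delta}{2(1+\tau)D}>0$ already gives positivity.
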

\begin{proof}
Let $\Delta\leq \beta_{ \tau} F_k^\alpha$. As $\beta_{\tau} \leq \beta$, from \Cref{lem:convex_model_decrease_alpha} we obtain 
\begin{equation*}
 m_k(0) - m_k(s_k(\Delta)) \overset{\eqref{eq:md_alpha},\beta_{ \tau} \leq \beta }{\geq} \frac{(1+\eta_2)F_k}{2D}\min\left \{\beta_{ \tau} F_k^{\alpha},\,\Delta \right \} 
 {=}
 \frac{(1+\eta_2)F_k \Delta}{2D} \geq \frac{F_k \Delta}{2D} .
\end{equation*}
and consequently using the assumptions of the theorem
\begin{equation}
    m_k(0) - m_k(s_k)\overset{\eqref{eq:small_radi_eq1}}{\geq}\left( \frac{1}{1+  \tau}\right)(m_k(0)-m_k(s_k(\Delta))) 
    \geq \frac{F_k \Delta}{(1+\tau)2D} .\label{eq:tr_small_radi_eq2} 
\end{equation}
By \Cref{lem:model_approximation}, we infer
\begin{equation}
|f(x_k+s_k)-m_k(s_k)|
\le C_f \Delta^{2+\nu}.\label{eq:tr_small_radi_eq3}
\end{equation}
Therefore, combining the previous bounds we obtain
\begin{equation*}
|1-\rho_k|
=
\frac{|f(x_k+s_k)-m_k(s_k)|}{ m_k(0) - m_k(s_k)}
\overset{\eqref{eq:tr_small_radi_eq2},\eqref{eq:tr_small_radi_eq3}}{\leq}
\frac{(1+ \tau)C_f \Delta^{2+\nu}}{(F_k \Delta)/(2D)}
=
2 (1+ \tau)C_f D\,\frac{\Delta^{1+\nu}}{F_k}.
\end{equation*}
Using $\Delta \leq \beta_{ \tau} F_k^\alpha$ and $\alpha(1+\nu)=1$ it follows that
\begin{equation*}
|1-\rho_k|
\leq
2  (1+ \tau) C_f D\,\beta_{ \tau}^{1+\nu}
\leq 1-\eta_2,
\end{equation*}
by the definition of $\beta_{ \tau}$. This proves the statement.
\end{proof}

We now obtain the main complexity result in the convex regime by combining \Cref{thm:main_abstract_convex_result} with \Cref{lem:small_radi_accepted_ver1}.

\begin{theorem}[Universal complexity of
\Crefalgvariant{var:variant1}{} under convexity]\label{thm:tr_convex_complexity}
Let \Cref{ass:nonconvex_assumptions} and \Cref{ass:convex_assumptions} hold, and let \((x_k)_k\) be generated by
\Crefalgvariant{var:variant1}{} of \Cref{alg:abstract_tr_algorithm}. Then
\[
    k_{\mathrm{TR},\mathrm{cvx}}(\varepsilon)
    :=
    \min\{k\in\mathbb N:f(x_{k})-f_\star\leq\varepsilon\}
    =
    \mathcal O\bigl(\varepsilon^{-1/(1+\nu)}\bigr)
    \qquad (\varepsilon\downarrow0).
\]
\end{theorem}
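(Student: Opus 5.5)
The plan is to apply \Cref{thm:main_abstract_convex_result} directly, so the task reduces to verifying its two hypotheses (i) and (ii) for \Crefalgvariant{var:variant1}{}. Once both are checked with constants that do not depend on $\varepsilon$ or on $k$, the explicit bound in \Cref{thm:main_abstract_convex_result} gives $k_{\mathrm{TR},\mathrm{cvx}}(\varepsilon)=\mathcal O(\varepsilon^{-1/(1+\nu)})$. If the algorithm terminates finitely at a stationary point, that point is a global minimizer by convexity, so $F_k=0\leq\varepsilon$ and there is nothing to prove. Hence the standing convention applies; it is justified for this variant by \Cref{thm:infinite_successful_set}(i).

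For condition (i), I use the inexactness requirement \eqref{eq:fractional_model_descent} together with $\tau_k<\bar\tau$. This gives $m_k(0)-m_k(s_k)\geq (1+\bar\tau)^{-1}(m_k(0)-m_k(s_k^\star))$ for every $k$. Thus (i) holds with $\kappa_{\mathrm{cvx}}:=1/(1+\bar\tau)\in(0,1]$. Here $s_k^\star$ is the global subproblem solution with radius $\Delta_k$, which matches the notation $s_k(\Delta_k)$ of the abstract theorem.

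For condition (ii), I invoke \Cref{lem:small_radi_accepted_ver1} with $\Delta=\Delta_k$ and $\tau=\bar\tau$. Fix $k$. Since $\tau_k<\bar\tau$, we have $1/(1+\tau_k)\geq 1/(1+\bar\tau)$, so the step $s_k$ satisfies \eqref{eq:small_radi_eq1} with $\tau=\bar\tau$, and $\|s_k\|\leq\Delta_k$ holds. The lemma therefore gives: if $\Delta_k\leq\beta_{\bar\tau}F_k^\alpha$, then $\rho_k\geq\eta_2$. Since $\vartheta=0$, $\rho_k^\vartheta=\rho_k$. I set $\beta_{\mathrm{cvx}}:=\beta_{\bar\tau}$ as defined in \eqref{eq:def_beta_tau}. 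Comparing with \eqref{eq:defn_beta_C}, the factor $(1+\bar\tau)^{-\alpha}\leq1$ in the second entry of the minimum gives $0<\beta_{\bar\tau}\leq\beta$, as the abstract theorem requires.

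The main point needing care is uniformity. The constants $\kappa_{\mathrm{cvx}}$, $\beta_{\mathrm{cvx}}$, and hence $\mu_{\mathrm{cvx}}$ and $C_{\mathrm{cvx}}$, must be independent of $k$, even though $\tau_k$ varies. This is exactly why the argument uses the uniform bound $\bar\tau$ rather than $\tau_k$. The constants may depend on $\nu$, $L_\nu$ and $D$, but these quantities enter only the analysis, never the algorithm, which is what makes the result universal. With (i) and (ii) verified, \Cref{thm:main_abstract_convex_result} gives the explicit bound and hence the claimed $\mathcal O(\varepsilon^{-1/(1+\nu)})$ rate.
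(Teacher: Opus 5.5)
Your proposal is correct and matches the paper's proof: it applies \Cref{thm:main_abstract_convex_result} with $\kappa_{\mathrm{cvx}}=1/(1+\bar\tau)$ and $\beta_{\mathrm{cvx}}=\beta_{\bar\tau}\leq\beta$, and checks condition (ii) via \Cref{lem:small_radi_accepted_ver1} with $\tau=\bar\tau$. Your remarks on finite termination and on the $k$-independence of the constants are sound and spell out what the paper leaves implicit.
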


\begin{proof}
It suffices to apply \Cref{thm:main_abstract_convex_result} with
\[
\kappa_{\mathrm{cvx}} = \frac{1}{1 + \bar\tau},
\qquad
\beta_{\mathrm{cvx}} = \beta_{\bar \tau},
\]
where $\beta_{\bar \tau} \leq \beta$ is defined in \eqref{eq:def_beta_tau}, and where $\bar\tau$ denotes the upper bound on the sequence of accuracy parameters, that is, $\tau_k \leq \bar\tau$. The claim then follows immediately from \Cref{lem:small_radi_accepted_ver1}, applied with $\tau = \bar \tau$.

\end{proof}

\subsection{Fast local convergence with universal rates}
To complete the picture, we further analyze local fast convergence under local strong convexity of the objective. Although the proofs in this section are rather standard, we provide them to ensure the presentation remains self-contained. 
During the following section we again denote by $s_k^\star$ an arbitrary global solution of the subproblem, i.e.
\begin{equation*}
    s_k^\star \in
        \argmin_{\|s\|\leq \Delta_k} m_k(s).
\end{equation*}
\begin{theorem}[Local fast convergence, \Crefalgvariant{var:variant1}{}]\label{thm:tr_local_fast_convergence}
Let \Cref{ass:nonconvex_assumptions} hold. Let $x_\star$ be a local minimizer of $f$, and suppose that $\nabla^2 f(x_\star)\succ0$ and $x_k\to x_\star$. Then there exist an index $K_0 \in \mathbb{N}$ and a lower bound $\underline\Delta>0$ such that 
\begin{itemize}
    \item[(i)] for all $k\geq K_0$ the step is very successful, i.e. $\rho_k \geq \eta_2$.
    \item[(ii)] for all $k \geq K_0$, we have $\Delta_k\geq \underline\Delta$.
    \item[(iii)] for all \(k\geq K_0\), the exact trust-region solution satisfies
\[
    s_k^\star=-\nabla^2 f(x_k)^{-1}\nabla f(x_k),
\]
and both the computed step and the exact step are strictly feasible, i.e.
\[
    \max\{\|s_k\|,\|s_k^\star\|\}<\Delta_k.
\]

    \item[(iv)] If $\nu =0$ and $\tau_k \leq  \|\nabla f(x_k) \|^2$ the
     trust-region method eventually converges $Q$-superlinearly, i.e.
\begin{equation*}
    \|x_{k+1}-x_\star\|=o(\|x_k-x_\star\|).
\end{equation*}
\item[(v)]  If \(\nu\in(0,1]\) and $\tau_k \leq  \|\nabla f(x_k) \|^2$, then there exist a constant $C>0$ such that, for all sufficiently large $k$
\begin{equation*}
    \|x_{k+1}-x_\star\|
    \leq
    C\|x_k-x_\star\|^{1+\nu}.
\end{equation*}
\end{itemize}
\end{theorem}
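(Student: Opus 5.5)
The plan is to localize around $x_\star$ and work with uniform bounds there. Since $\nabla^2 f$ is continuous and $\nabla^2 f(x_\star)\succ0$, I fix a ball $B_r(x_\star)$ on which $\mu I\preceq\nabla^2 f(x)\preceq M I$ for some $0<\mu\le M$. Because $x_k\to x_\star$, we have $x_k\in B_{r/2}(x_\star)$ and $g_k\to 0$ for all large $k$. Two facts about the strongly convex model then follow for large $k$.

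\begin{itemize}
\item[(a)] Any $s$ with $m_k(s)\le m_k(0)$ satisfies $\langle g_k,s\rangle+\tfrac{\mu}{2}\|s\|^2\le0$. Hence $\|s\|\le 2\|g_k\|/\mu$, and in particular $\|s_k\|,\|s_k^\star\|\le\min\{\Delta_k,2\|g_k\|/\mu\}\to0$.
\item[(b)] By \Cref{lem:normal_cauchy_decrease} and \eqref{eq:fractional_model_descent},
\[
p_k(s_k)\ \ge\ \tfrac{\kappa_C}{1+\bar\tau}\,\|g_k\|\min\Bigl\{\tfrac{\|g_k\|}{1+M},\Delta_k\Bigr\}.
\]
\end{itemize}

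For (i) I would not use the Hölder bound \eqref{eq:eq_model_approx_func}. For $\nu=0$ it only gives an $O(\|s\|^2)$ remainder, which does not vanish relative to $p_k$. Instead I use the integral form of the Taylor remainder:
\[
|f(x_k+s_k)-m_k(s_k)|\le\tfrac12\,\omega_k\|s_k\|^2,
\qquad
\omega_k:=\sup_{t\in[0,1]}\bigl\|\nabla^2 f(x_k+ts_k)-\nabla^2 f(x_k)\bigr\|.
\]
Here $\omega_k\to0$ by uniform continuity of $\nabla^2 f$ on the closed ball, since $x_k\to x_\star$ and $s_k\to0$. This gives $|1-\rho_k|\le \omega_k\|s_k\|^2/(2p_k(s_k))$, and I split into two cases.
\begin{itemize}
\item If $\Delta_k\le\|g_k\|/(1+M)$, then $\|s_k\|^2\le\Delta_k\cdot 2\|g_k\|/\mu$ and $p_k(s_k)\gtrsim\|g_k\|\Delta_k$.
\item Otherwise, $\|s_k\|^2\le 4\|g_k\|^2/\mu^2$ and $p_k(s_k)\gtrsim\|g_k\|^2$.
\end{itemize}
In both cases $|1-\rho_k|\le C\omega_k\to0$, so $\rho_k\ge\eta_2$ for all $k\ge K_0$. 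Then (ii) is immediate: for $k\ge K_0$ the radius update gives $\Delta_{k+1}\ge\Delta_k$, so I can take $\underline\Delta:=\Delta_{K_0}$.

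For (iii), enlarge $K_0$ so that $2\|g_k\|/\mu<\underline\Delta\le\Delta_k$. The Newton step $-\nabla^2 f(x_k)^{-1}g_k$ has norm at most $\|g_k\|/\mu<\Delta_k$. It is therefore feasible, and it is the unique global minimizer of the strictly convex model, so it equals $s_k^\star$. By (a), $\|s_k\|\le 2\|g_k\|/\mu<\Delta_k$ as well.

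For (iv) and (v), I compare $s_k$ with $s_k^\star$ using strong convexity of $m_k$ and the first-order optimality of the unconstrained minimizer $s_k^\star$:
\[
\tfrac{\mu}{2}\|s_k-s_k^\star\|^2\le m_k(s_k)-m_k(s_k^\star)\le\tfrac{\tau_k}{1+\tau_k}\,p_k(s_k^\star)\le\tau_k\,\tfrac{\|g_k\|^2}{2\mu}.
\]
With $\tau_k\le\|g_k\|^2$ this yields $\|s_k-s_k^\star\|\le\|g_k\|^2/\mu\le (M^2/\mu)\,\|x_k-x_\star\|^2$. Since every step is accepted, $x_{k+1}=x_k+s_k$, and I write
\[
x_{k+1}-x_\star=(x_k+s_k^\star-x_\star)+(s_k-s_k^\star).
\]
The first term is the standard Newton error
\[
\nabla^2 f(x_k)^{-1}\bigl[\nabla f(x_\star)-\nabla f(x_k)-\nabla^2 f(x_k)(x_\star-x_k)\bigr].
\]
By \eqref{eq:eq_model_approx_grad} it is bounded by $\tfrac{C_g}{\mu}\|x_k-x_\star\|^{1+\nu}$ when $\nu>0$. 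For $\nu=0$ the same mean-value argument with the vanishing modulus $\omega$ makes it $o(\|x_k-x_\star\|)$. The second term is $O(\|x_k-x_\star\|^2)$, which is dominated by the first because $1+\nu\le2$.

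The main obstacle is step (i) in the case $\nu=0$, where a pure Hölder estimate is too weak. The local continuity modulus together with the case split on $\Delta_k$ versus $\|g_k\|$ is the device I expect to carry it.
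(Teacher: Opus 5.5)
Your proof is correct and follows essentially the same route as the paper. You bound $\|s_k\|$ by a multiple of $\|g_k\|$, combine the Cauchy decrease with the $o(\|s_k\|^2)$ Taylor remainder to get $\rho_k\to1$ (hence a radius floor and the Newton step), and split $x_{k+1}-x_\star$ into the Newton error plus $s_k-s_k^\star=O(\sqrt{\tau_k}\,\|g_k\|)$. The differences are only cosmetic: your bound (a), valid for any model-decreasing step, replaces the paper's case distinction on whether $\|s_k^\star\|<\Delta_k$, and you identify $s_k^\star$ through feasibility of the Newton step rather than through complementarity.
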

\begin{proof}
    Since $\nabla^2 f(x_\star)\succ0$ and $x_k\to x_\star$, there exist by continuity of $\nabla^2f$ a radius $r>0$ and a pair $M \geq\mu>0$ as well as $K_0 \geq 0$ with
\begin{equation}
    x_k \in \overline{B_r(x^*)}, \qquad \sup_{x \in \overline{B_r(x_\star)}} \|\nabla^2 f(x)\|_{\mathrm{op}} \leq M, \qquad  H_k= \nabla^2 f(x_k)\succeq \mu I
    \qquad \forall k\ge K_0 . \label{eq:thm_local_eq_TR0}
\end{equation}
 First note that the exact solution $s_k^\star$ of the subproblem is unique because of \eqref{eq:thm_local_eq_TR0}. Also by mean value theorem 
 \begin{equation}
     \|\nabla f(x) - \nabla f(y)\| \leq M\|x- y\| \qquad x,y \in \overline{B_r(x_\star)} .\label{eq:thm_local_eq_TR01}
 \end{equation}
 The optimality conditions for the trust-region subproblem in \Cref{thm:global_solution_subproblem} imply that there exists a Lagrange multiplier $\lambda_k\geq 0 $ such that
\begin{equation}
       (H_k+\lambda_k I)s_k^\star=-g_k,
    \qquad
    H_k+\lambda_k I\succeq0,
    \qquad
    \lambda_k(\Delta_k-\|s_k^\star\|)=0. \label{eq:thm_local_eqTR1}
\end{equation}
Taking the inner product of the first equation with $s_k^\star$ and rearranging we have
\begin{equation}
    \|s_k^\star\|
    \leq
    \frac{1}{\mu + \lambda_k}\|\nabla f(x_k)\| 
    \leq
    \frac{1}{\mu}\|\nabla f(x_k)\| \qquad  \text{for $k \geq K_0$}.
    \label{eq:thm_local_eqTR2}
\end{equation}
Now we aim to show that $\|s_k\| \lesssim \| \nabla f(x_k)\|$ also in the inexact case. For this we distinguish two cases:\\[0.2cm]
\emph{Case 1: $\|s_k^\star\| < \Delta_k$}. Using the complementarity system in \Cref{thm:global_solution_subproblem} we infer $\lambda_k = 0$ and calculate
\begin{equation}
    m_k(0) - m_k(s_k^\star) = -\langle \nabla f(x_k),s_k^\star \rangle - \frac{1}{2}\langle H_k s_k^\star, s_k^\star \rangle \overset{\eqref{eq:thm_local_eqTR1}}{=} \frac{1}{2}\langle H_k s_k^\star, s_k^\star \rangle
    =
    \frac{1}{2}\langle H_k^{-1} \nabla f(x_k), \nabla f(x_k) \rangle
    \label{eq:thm_local_eq3}
\end{equation}
Moreover by $\eqref{eq:thm_local_eq_TR0}$, the model-objective $m_k$ is $\mu$--strongly convex for $k \geq K_0$, which implies
\begin{align*}
\begin{aligned}
    \frac{\mu}{2}\|s_k - s_k^\star \|^2 
    &\overset{\eqref{eq:thm_local_eq_TR0}}{\leq} m_k(s_k) - m_k(s_k^\star) =  [m_k(s_k) - m_k(0)] + [m_k(0) - m_k(s_k^\star) ] \\
    &\overset{\eqref{eq:fractional_model_descent}}{\leq} 
    -\left(\frac{1}{1 +\tau_k}\right)[m_k(0) - m_k(s_k^\star) ] + [m_k(0) - m_k(s_k^\star) ] \\
    &= \left(\frac{\tau_k}{1 +\tau_k}\right)[m_k(0) - m_k(s_k^\star) ] \\
    &\overset{\eqref{eq:thm_local_eq3}}{=} \left(\frac{\tau_k}{2(1 +\tau_k)}\right)\langle H_k^{-1} \nabla f(x_k), \nabla f(x_k) \rangle 
    \leq  
    \left(\frac{\tau_k}{2 \mu (1 +\tau_k)}\right) \| \nabla f(x_k)\|^2.
    \end{aligned}
\end{align*}
Consequently we deduce
\begin{equation}
    \|s_k - s_k^\star \| \leq \frac{\sqrt{\tau_k}} {\mu}\|\nabla f(x_k)\|.   \label{eq:thm_local_eqTR4}
\end{equation}
Combined with the upper bound $\tau_k \leq \bar \tau$, we infer
\begin{equation}
    \|s_k\| \leq \|s_k - s_k^\star \| + \| s_k^\star \| \overset{\eqref{eq:thm_local_eqTR2},\eqref{eq:thm_local_eqTR4}}{\leq} \left( 
    \frac{1+ \sqrt{\bar \tau} }{\mu} \right) \| \nabla f(x_k)\|.\label{eq:thm_local_eqTR5}
\end{equation}
\emph{Case 2: $\|s_k^\star\| = \Delta_k$}. In this case we just use \eqref{eq:thm_local_eqTR2} to infer
\begin{equation}
    \|s_k\| \leq \Delta_k = \|s_k^\star\| \overset{\eqref{eq:thm_local_eqTR2}}{\leq } \frac{1}{\mu} \| \nabla f(x_k)\| \label{eq:thm_local_eqTR6}
\end{equation}
Take now the Cauchy-decrease direction $s_k^C$ defined in \Cref{lem:normal_cauchy_decrease}, and infer using \eqref{eq:fractional_model_descent} that 
\begin{equation}
    m_k(0) - m_k(s_k) \geq \left( \frac{1}{1+\bar \tau}\right)\left( m_k(0) - m_k(s_k^\star)\right) \geq  \left( \frac{1}{1+\bar \tau}\right)\left(m_k(0) - m_k(s_k^C)\right). \label{eq:thm_local_eqTR55}
\end{equation}
Now we use Cauchy decrease condition in \Cref{lem:normal_cauchy_decrease}, and the inequalities above in \eqref{eq:thm_local_eqTR5},\eqref{eq:thm_local_eqTR6} to estimate from below
\begin{align*}
m_k(0) - m_k(s_k) 
\overset{\eqref{eq:thm_local_eqTR55}}{\geq} 
\left( \frac{1}{1+\bar \tau} \right)(m_k(0) - m_k(s_k^C)) 
&\overset{\eqref{eq:standard_cauchy_decrease}}{\geq}
\left( \frac{\kappa_C}{1+\bar \tau} \right) \,\|g_k\| \,
\min\!\left\{
\frac{\|g_k\|}{1+\|H_k\|_{\mathrm{op}}},\, \Delta_k
\right\} \\
&\overset{\eqref{eq:thm_local_eqTR5},\eqref{eq:thm_local_eqTR6}}{\geq}
\left( \frac{\kappa_C \mu}{(1+\sqrt{\bar \tau})(1+\bar \tau)}\right) \min\left\{ \frac{\mu}{(1+\sqrt{\bar \tau})(1+M)},1  \right \} \|s_k\|^2,
\end{align*}
where we also used $\|s_k\| \leq \Delta_k$ in the last inequality and the constant $M>0$ defined in \eqref{eq:thm_local_eq_TR0}. Let us denote the constant above in front of $\|s_k\|^2$ by $C>0$. Since $x_k\to x_\star$ and $\|s_k\|\to 0$, after possibly
enlarging $K_0$, we may assume that even
\[
    x_k+t s_k\in\overline{B_r(x^\star)}
    \qquad
    \text{for all }t\in[0,1]\text{ and }k\geq K_0.
\]
Since $H$ is finite-dimensional and
$\nabla^2f$ is continuous, it is uniformly continuous on
$\overline {B_r(x_\star)}$. Consequently, standard estimates using the integral Taylor formulas yield
\[
    m_k(s_k)-f(x_k+s_k)=o(\|s_k\|^2).
\]
Hence, we deduce
\begin{equation*}
|1-\rho_k|
    =
    \frac{|f(x_k+s_k)-m_k(s_k)|}
         {m_k(0)-m_k(s_k)}  \leq
         \frac{|f(x_k+s_k)-m_k(s_k)|}
         {C\|s_k\|^2}
    \to 0  \qquad \text{(as $k \to \infty$)}.
\end{equation*}
Thus we may assume after potentially enlarging $K_0$ that $\rho_k\ge\eta_2$ for all $k\geq K_0$. Hence from the radius update mechanism of \Cref{alg:abstract_tr_algorithm}, we infer
\begin{equation*}
    \Delta_k \geq \underline \Delta:= \Delta_{K_0} \qquad \text{for all $k \geq K_0$}. 
\end{equation*}
By \eqref{eq:thm_local_eqTR2}, \eqref{eq:thm_local_eqTR5} and \eqref{eq:thm_local_eqTR6} and since $\nabla f(x_k) \to 0$ as $k \to \infty$,  we may potentially again enlarge $K_0$ until 
\begin{align*}
    \max \{\|s_k \| ,\|s_k^\star \|\} < \underline \Delta \leq \Delta_k \qquad \text{for all $k \geq K_0$}. 
\end{align*}
Therefore using the complementarity conditions in \eqref{eq:thm_local_eqTR1} for the exact solution, we infer that the Lagrange multiplier for $s_k^\star$ satisfies $\lambda_k = 0$ for $k \geq K_0$. Consequently we deduce that
\begin{equation}
     s_k^\star=-\nabla^2 f(x_k)^{-1}\nabla f(x_k) \qquad \text{for all $k \geq K_0$},\label{eq:thm_local_eqTR7}
\end{equation}
which is the exact Newton step. It remains to prove $(iv)$ and $(v)$. Assume that $\tau_k \leq \|\nabla f(x_k)\|^2$ for all $k \geq K_0$. We set $x_{k+1}^\star:= x_k + s_k^\star$. By \eqref{eq:thm_local_eqTR7}, $x_{k+1}^\star$ is the exact Newton-step for $k \geq K_0$. Also note that $x_{k+1} - x_{k+1}^\star = s_k - s_k^\star$ and since  $\|s_k^\star\| < \Delta_k$ for $ k \geq K_0$, we deduce from \eqref{eq:thm_local_eqTR4} and $\nabla f(x_\star) = 0$ that 
\begin{align}
    \|x_{k+1} - x_\star\| \leq 
     \|x_{k+1} - x_{k+1}^\star\| + \|x_{k+1}^\star -  x_\star\| 
&\overset{\eqref{eq:thm_local_eqTR4}}{\leq} \frac{\sqrt{\tau_k}}{\mu}\| \nabla f(x_k) - \nabla f(x_\star) \|+ \|x_{k+1}^\star -  x_\star\| \notag \\
&\leq \frac{1}{\mu}\| \nabla f(x_k) - \nabla f(x_\star) \|^2+ \|x_{k+1}^\star -  x_\star\| \notag \\
&\overset{\eqref{eq:thm_local_eq_TR01}}{\leq} 
\frac{M^2}{\mu}\| x_k - x_\star \|^2+ \|x_{k+1}^\star -  x_\star\|. \label{eq:thm_local_eqTR8}
\end{align}
Since $x^\star_{k+1}$ is the classical Newton update, we obtain 
\begin{equation}
\begin{cases}
  \|x_{k+1}^\star -  x_\star\| = o(\|x_{k} -  x_\star\|) &\text{if $f \in C^2$} \\
  \|x_{k+1}^\star -  x_\star\| = \mathcal{O}(\|x_k -  x_\star\|^{1+ \nu}) &\text{if $f \in C^{2,\nu}$}
\end{cases}, \notag
\end{equation}
by classical textbook Newton analysis, see for instance \cite[Exercise~5.7.1]{kelley1995iterative}.
Consequently, statement $(iv)$ and $(v)$ follow immediately from \eqref{eq:thm_local_eqTR8}. 
\end{proof}
\begin{remark}
The statement in $(iv)$ remains valid for merely $C^2$--functions $f$ and does not rely on the Hölder condition with exponent $\nu=0$. The proof remains the same.
\end{remark}
The following corollary is a direct consequence. Under strong convexity we do not need to require that $x_k \to x_\star$.
\begin{corollary}[Fast convergence under strong convexity, \Crefalgvariant{var:variant1}{}] Let \Cref{ass:convex_assumptions} hold and additionally assume that $f:H \to \mathbb{R}$ is $\mu$--strongly convex. Also denote by $x_\star$ the unique global minimizer. Then there exist an index $K_0 \in \mathbb{N}$ and a lower bound $\underline\Delta>0$ such that $(i)$--$(v)$ from \Cref{thm:tr_local_fast_convergence} hold true, without requiring $x_k \to x_\star$ a priori. 
\end{corollary}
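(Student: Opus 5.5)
The plan is to reduce the corollary to \Cref{thm:tr_local_fast_convergence}. That theorem gives $(i)$--$(v)$ once we know three things: $x_\star$ is a local minimizer, $\nabla^2 f(x_\star)\succ0$, and $x_k\to x_\star$. The first two are immediate. Under $\mu$-strong convexity, $x_\star$ is the unique global (hence local) minimizer, and $\nabla^2 f(x)\succeq\mu I$ for every $x$, in particular at $x_\star$. The whole task is therefore to show that $x_k\to x_\star$ without assuming it.

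To prove convergence I will use the convex complexity result already established. By \Cref{thm:tr_convex_complexity}, or more directly by \Cref{lem:successful_rate_abstract} combined with \Cref{lem:small_radi_accepted_ver1} and the standing convention that $\mathcal S$ is infinite (justified by \Cref{thm:infinite_successful_set}(i)), we get $F_k\to0$. This uses $|\mathcal S_{k-1}|\to\infty$ in \eqref{eq:convex_abstract_rate_successful}. In the finite-termination case we have $g_k=0$ at some $k$, so $x_k=x_\star$ and nothing remains to prove.

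Strong convexity then gives the quadratic growth bound
\[
\tfrac{\mu}{2}\|x_k-x_\star\|^2\le f(x_k)-f_\star=F_k,
\]
which follows from the strong convexity inequality at $x_\star$ together with $\nabla f(x_\star)=0$. Hence $\|x_k-x_\star\|\le(2F_k/\mu)^{1/2}\to0$.

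I also need to check that the standing assumptions of the convex section actually hold in this setting. In particular, the sublevel-set bound $D<\infty$ from \eqref{eq:assumption_sublevel} is already contained in \Cref{ass:convex_assumptions}, and it also follows independently from strong convexity, since the sublevel sets are bounded. With $x_k\to x_\star$ in hand, \Cref{thm:tr_local_fast_convergence} applies verbatim and yields $K_0$, $\underline\Delta$ and properties $(i)$--$(v)$. For $(iv)$ and $(v)$ the inexactness assumption $\tau_k\le\|\nabla f(x_k)\|^2$ is carried over from that theorem.

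I do not expect a real obstacle. The only delicate point is to make sure $F_k\to0$ is justified for the whole sequence rather than just along a subsequence. This holds because $F_k$ is nonincreasing by \Cref{lem:well-definedness}, and the bound in \Cref{lem:successful_rate_abstract} holds for all $k\geq1$.
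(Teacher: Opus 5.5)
Your proposal is correct and follows the paper's proof. Like the paper, you get $F_k\to 0$ from the convex complexity result (\Cref{thm:tr_convex_complexity}) plus monotonicity, convert it to $x_k\to x_\star$ via $\tfrac{\mu}{2}\|x_k-x_\star\|^2\le F_k$, and then apply \Cref{thm:tr_local_fast_convergence} verbatim; your extra checks (that $\nabla^2 f(x_\star)\succ 0$, the finite-termination case, and the inexactness condition needed for (iv)--(v)) are harmless details the paper leaves implicit.
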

\begin{proof}
    It suffices to show that under the assumptions $x_k \to x_\star$. By \Cref{thm:tr_convex_complexity}, we have \(f(x_k)-f_\star\to0\).
Strong convexity therefore gives
\[
    \frac{\mu}{2}\|x_k-x_\star\|^2
    \leq f(x_k)-f_\star\to0.
\]
 The rest of the proof is identical to the one above.   
\end{proof}

\begin{remark}
\Cref{thm:tr_local_fast_convergence} recovers the classical fast local behavior of Newton-type methods for the trust-region scheme. Under the vanishing inexactness condition
\[
    \tau_k \leq \|\nabla f(x_k)\|^2,
\]
the method converges Q-superlinearly whenever \(f\in C^{2}\). If, in addition, \(\nabla^2 f\) is locally \((L_\nu,\nu)\)-Hölder continuous with
\(\nu\in(0,1]\), then the convergence has order \(1+\nu\). Importantly, this local rate is obtained adaptively, without knowledge of the Hölder exponent \(\nu\in[0,1]\) or the corresponding Hölder constant $L_\nu$. For \(\nu\in(0,1]\), this exponent is stronger than the local exponent \(1+\xi\), where
\[
    \xi = \frac{\nu}{1+\nu},
\]
obtained for the gradient-regularized Newton method in \cite[Theorem~6.6]{doikov2024super}. Although the latter rate is stated in gradient norm, under the nonsingularity condition \(\nabla^2 f(x_\star)\succ0\) the gradient norm and the distance to the solution are locally equivalent up to constants. The endpoint
\(\nu=0\) is not covered by \cite[Theorem~6.6]{doikov2024super}; related superlinear convergence results in the semismooth setting have recently
been obtained in \cite{alphonse2026skip}.
\end{remark}

\section{Analysis of the exact UniCAT method} \label{sec:cat_convergence}
In this section, we study \Crefalgvariant{var:variant2}{} of \Cref{alg:abstract_tr_algorithm}. In contrast to the analysis of \Crefalgvariant{var:variant1}{} in the previous section, we are able to establish universality in all three regimes: the nonconvex, convex, and locally strongly convex regimes.

Moreover, these results do not rely on the additional uniform boundedness assumption on the Hessian stated in \Cref{ass:bounded_hessian}.

We emphasize that \cite{hamad2022consistently,hamad2024simple} study the nonconvex and locally strongly convex settings under the assumption of Lipschitz-continuous Hessians. In contrast, we treat the full Hölder scale $\nu \in [0,1]$ and additionally cover the convex regime. Consequently, the results presented in this section are of independent interest, including in the nonconvex case.

However, we recall that, unlike the results in \cite{hamad2022consistently,hamad2024simple}, our analysis assumes exact subproblem solves. This simplifies the proofs, but limits the practical applicability of the resulting method.

During this section we frequently use the notation
\[
    \psi_k := \min \{\| \nabla f(x_k) \| , \| \nabla f(x_{k}+s_k)\|\},
    \qquad
    F_k := f(x_k)-f_\star
    \qquad (k\in\mathbb N).
\]

Also note that the notation for $F_k$, also introduced in \eqref{eq:def_function_gap} is mostly used for the analysis of convex optimization algorithms. Since we assume that the solution set is not empty this quantity is also well defined in the nonconvex case. However it will not automatically shrink to zero here. It was also used in the analysis in \cite{hamad2022consistently}.

\subsection{First-order universal complexity in the nonconvex regime}\label{sec:cat_nonconvex_complexity}
In this subsection we prove first-order complexity in the nonconvex regime of \Crefalgvariant{var:variant2}{} of \Cref{alg:abstract_tr_algorithm}. 
The following lemma was similarly proved in \cite{hamad2024simple} for $\nu =1$, we present a simplified proof for exact subproblem solves and the full Hölder scale $\nu \in [0,1]$.

\begin{lemma}[Gradient bound for \Crefalgvariant{var:variant2}{} of \Cref{alg:abstract_tr_algorithm}]
\label{lem:cat_gradient_control}
Let \Cref{ass:nonconvex_assumptions} hold, and suppose that $\eta_2<1/(1+\vartheta)$.  Furthermore set
\begin{equation*}
    K_\vartheta
    :=
    \frac{
        C_g+\frac{2C_f}{1-\eta_2}
    }{
        1-a_\vartheta
    }, \qquad   a_\vartheta
    :=
    \frac{\eta_2\vartheta}{1-\eta_2} <1.
\end{equation*}
Then, for every iteration $k \in \mathbb N$,
\begin{equation}
    \|s_k\|<\Delta_k
    \quad\text{or}\quad
    \rho_k^\vartheta<\eta_2
    \quad\ \Longrightarrow \quad
    \|\nabla f(x_{k,+})\|
    \leq
    K_\vartheta\|s_k\|^{1+\nu},\label{eq:nonconvex_grad_bound_main}
\end{equation}
where we again used the notation $x_{k,+} = x_k + s_k$.
\end{lemma}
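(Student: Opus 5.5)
The plan is to use the exact optimality system from \Cref{thm:global_solution_subproblem}(i) for $s_k$: there is $\lambda_k\geq 0$ with $(H_k+\lambda_k I)s_k=-g_k$, $H_k+\lambda_k I\succeq 0$ and $\lambda_k(\Delta_k-\|s_k\|)=0$. Using this system, the gradient at the trial point splits as
\[
\nabla f(x_{k,+}) = \bigl[\nabla f(x_k+s_k)-g_k-H_ks_k\bigr] - \lambda_k s_k .
\]
By \eqref{eq:eq_model_approx_grad} the bracket is bounded by $C_g\|s_k\|^{1+\nu}$. Hence
\[
\|\nabla f(x_{k,+})\|\leq C_g\|s_k\|^{1+\nu}+\lambda_k\|s_k\|,
\]
and everything reduces to controlling the multiplier term $\lambda_k\|s_k\|$.

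\emph{Case $\|s_k\|<\Delta_k$.} Complementarity gives $\lambda_k=0$. Therefore $\|\nabla f(x_{k,+})\|\leq C_g\|s_k\|^{1+\nu}$. Since $a_\vartheta\in[0,1)$, we have $C_g\leq K_\vartheta$, and the claim follows at once.

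\emph{Case $\rho_k^\vartheta<\eta_2$.} Here $\lambda_k$ may be positive, so I would extract a bound on $\lambda_k\|s_k\|$ from the failed ratio test.

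\begin{enumerate}
\item \emph{Lower bound on the predicted decrease.} Substituting $g_k=-(H_k+\lambda_k I)s_k$ gives
\[
p_k(s_k)=\tfrac12\langle (H_k+\lambda_k I)s_k,s_k\rangle+\tfrac12\lambda_k\|s_k\|^2\geq \tfrac12\lambda_k\|s_k\|^2,
\]
using positive semidefiniteness of $H_k+\lambda_k I$.
\item \emph{Upper bound from the ratio test.} By \Cref{lem:well-definedness} the denominator is positive, so $\rho_k^\vartheta<\eta_2$ means $a_k(s_k)<\eta_2\bigl(p_k(s_k)+\tfrac{\vartheta}{2}\psi_k\|s_k\|\bigr)$. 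Combined with \eqref{eq:eq_model_approx_func}, i.e.\ $a_k(s_k)\geq p_k(s_k)-C_f\|s_k\|^{2+\nu}$, this gives
\[
(1-\eta_2)p_k(s_k)<C_f\|s_k\|^{2+\nu}+\tfrac{\eta_2\vartheta}{2}\psi_k\|s_k\|.
\]
\item \emph{Bound on the multiplier term.} Inserting step 1 and dividing by $\tfrac12(1-\eta_2)\|s_k\|$ (note $s_k\neq0$ because $g_k\neq 0$) yields
\[
\lambda_k\|s_k\|<\frac{2C_f}{1-\eta_2}\|s_k\|^{1+\nu}+a_\vartheta\psi_k .
\]
\item \emph{Absorption.} Since $\psi_k\leq\|\nabla f(x_{k,+})\|$, the gradient estimate becomes
\[
\|\nabla f(x_{k,+})\|\leq\Bigl(C_g+\tfrac{2C_f}{1-\eta_2}\Bigr)\|s_k\|^{1+\nu}+a_\vartheta\|\nabla f(x_{k,+})\|.
\]
The hypothesis $\eta_2<1/(1+\vartheta)$ is exactly equivalent to $a_\vartheta<1$, so the last term can be absorbed into the left-hand side, giving the constant $K_\vartheta$.
\end{enumerate}

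The main obstacle is this last step. The $\vartheta$-term in the acceptance ratio feeds a gradient quantity back into the estimate, and this only closes because the minimum in $\psi_k$ is dominated by the new gradient $\|\nabla f(x_{k,+})\|$ and $a_\vartheta<1$. The rest is routine manipulation of the Lagrange system together with \Cref{lem:model_approximation}.
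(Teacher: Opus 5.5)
Your proposal is correct and follows essentially the same argument as the paper. Both use the Lagrange system to obtain $\|\nabla f(x_{k,+})\|\leq \lambda_k\|s_k\|+C_g\|s_k\|^{1+\nu}$ and $p_k(s_k)\geq\tfrac12\lambda_k\|s_k\|^2$, get the inactive case from $\lambda_k=0$, bound $\lambda_k\|s_k\|$ in the failed-ratio case via the model-approximation estimate, and absorb the $a_\vartheta\|\nabla f(x_{k,+})\|$ term using $a_\vartheta<1$; the only cosmetic difference is that you exclude $s_k=0$ via $g_k\neq 0$, whereas the paper treats it as a trivial case.
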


\begin{proof}
Since $s_k$ solves the trust-region subproblem exactly, there exists according to \Cref{thm:global_solution_subproblem} a Lagrange multiplier $\lambda_k\geq0$ such that
\begin{equation}
    (H_k+\lambda_k I)s_k=-g_k,
    \qquad
    H_k+\lambda_k I\succeq0,
    \qquad
    \lambda_k(\Delta_k-\|s_k\|)=0.\label{eq:cat_nonconvex_gradlemma1}
\end{equation}
Hence $g_k+H_ks_k=-\lambda_k s_k$. Moreover, using again $ p_k(s_k) =
    m_k(0)-m_k(s_k)$ we have
\begin{align*}
    p_k(s_k)  =
    -\langle g_k,s_k\rangle
    -\frac12\langle H_ks_k,s_k\rangle 
    =
    \frac12\langle H_ks_k,s_k\rangle
    +\lambda_k\|s_k\|^2
    =
    \frac12\langle (H_k+\lambda_k I)s_k,s_k\rangle
    +\frac12\lambda_k\|s_k\|^2.
\end{align*}
Since $H_k+\lambda_k I\succeq 0$, we obtain for the predicted reduction
\begin{equation}
    p_k(s_k)\geq \frac12\lambda_k \|s_k\|^2. \label{eq:cat_nonconvex_gradlemma11}
\end{equation}
Using $g_k+H_ks_k=-\lambda_k s_k$ from above and \eqref{eq:eq_model_approx_grad} from \Cref{lem:model_approximation} the triangle inequality yields
\begin{equation}
    \|\nabla f(x_{k,+})\| \leq \| \nabla f(x_{k,+}) -\nabla f(x_{k})- H_ks_k\| + \|\nabla f(x_{k}) + H_ks_k\|
    \leq
    \lambda_k \|s_k\| + C_g \|s_k\|^{1+\nu}.\label{eq:cat_nonconvex_gradlemma2}
\end{equation}
We now distinguish two cases.\\[0.2cm]
\emph{Case 1:} First suppose $\|s_k\|<\Delta_k$. Then the complementarity condition in \eqref{eq:cat_nonconvex_gradlemma1} gives $\lambda_k=0$. Hence, by \eqref{eq:cat_nonconvex_gradlemma2}
\begin{equation*}
    \|\nabla f(x_{k,+})\| \leq
 C_g \|s_k\|^{1+\nu} \leq K_{\vartheta} \|s_k\|^{1+\nu} 
\end{equation*}
\emph{Case 2}: Now suppose $\rho_k^\vartheta<\eta_2$. First note that if $\|s_k\| = 0$, then $x_{k,+} = x_k$ and exact optimality
implies $g_k = 0$ so the claim is trivial. We may therefore assume $\|s_k\| >0 $.  The assumption $\rho_k^\vartheta<\eta_2$ yields
\begin{equation*}
    f(x_k) - f(x_k+s_k)
    <
    \eta_2
    \left(
        p_k(s_k)+\frac{\vartheta}{2} \psi_k \|s_k\|
    \right).
\end{equation*}
On the other hand, \eqref{eq:eq_model_approx_func} in \Cref{lem:model_approximation}  yields
\begin{equation*}
     f(x_k) - f(x_k+s_k) \geq p_k(s_k) - C_f\|s_k\|^{2+\nu}.
\end{equation*}
Combining the latter two estimates, also using $\psi_k \leq \|\nabla f(x_{k,+})\|$ we obtain
\begin{equation*}
p_k(s_k)-C_f \|s_k\|^{2+\nu}
    <
    \eta_2 p_k(s_k)
    +
    \frac{\eta_2\vartheta}{2}\psi_k \|s_k\| \leq 
    \eta_2 p_k(s_k)
    +
    \frac{\eta_2\vartheta}{2}\|\nabla f(x_{k,+})\|\|s_k\|
\end{equation*}
Using also $p_k(s_k) \geq \frac12\lambda_k \|s_k\| ^2$ from \eqref{eq:cat_nonconvex_gradlemma11} and rearranging, we obtain
\begin{equation*}
    \left(\frac{1-\eta_2}{2}\right)\lambda_k \|s_k\|^2
    \leq
    C_f \|s_k\|^{2+\nu}
    +
    \frac{\eta_2\vartheta}{2}\|\nabla f(x_{k,+})\|\|s_k\|.
\end{equation*}
Dividing by $\|s_k\|>0$, we get
\begin{equation*}
    \lambda_k \|s_k\|
    \leq
    \frac{2C_f}{1-\eta_2}\|s_k\|^{1+\nu}
    +
    \frac{\eta_2\vartheta}{1-\eta_2}\|\nabla f(x_{k,+})\|.
\end{equation*}
Combining the latter inequality and \eqref{eq:cat_nonconvex_gradlemma2} gives
\begin{equation*}
    \|\nabla f(x_{k,+})\|
    \leq
    \left(
        C_g+\frac{2C_f}{1-\eta_2}
    \right)\|s_k\|^{1+\nu}
    +
    \frac{\eta_2\vartheta}{1-\eta_2}\|\nabla f(x_{k,+})\|= 
     \left(
        C_g+\frac{2C_f}{1-\eta_2}
    \right)\|s_k\|^{1+\nu}
    +
    a_\vartheta\|\nabla f(x_{k,+})\|
\end{equation*}
Since $a_\vartheta \in (0,1)$ by construction, rearranging yields  \eqref{eq:nonconvex_grad_bound_main}.
\end{proof}
The following lemma is also inspired by \cite[Lemma~2]{hamad2024simple} and the corresponding conference version \cite{hamad2022consistently}.

\begin{lemma}[Radius lower bound for \Crefalgvariant{var:variant2}{} of \Cref{alg:abstract_tr_algorithm}]
\label{lem:cat_nonconvex_radius_lower_bound}
Let the assumptions of \Cref{lem:cat_gradient_control} hold, and fix
$\varepsilon>0$. Define
\begin{equation*}
    d_\varepsilon
    :=
    \left(
        \frac{\varepsilon}{K_\vartheta}
    \right)^{\frac{1}{1+\nu}},
    \qquad
    \delta_\varepsilon
    :=
    \min\{\Delta_0,\gamma_1d_\varepsilon\}.
\label{eq:ncvx_cat_lower_bound_radii1}
\end{equation*}
Suppose that
\begin{equation}
   \psi_n=  \min \{ \|\nabla f(x_n)\|,\|\nabla f(x_n+s_n)\| \}
    \geq \varepsilon
    \qquad \text{for } n=0,\ldots,k 
    \label{eq:cat_radi_lower_bound}
\end{equation}
Then the following two inequalities hold
\begin{equation}
    \Delta_n \geq \delta_\varepsilon
    \qquad \text{for } n=0,\ldots,k+1,
\label{eq:ncvx_cat_lower_bound_radii2}
\end{equation}
and
\begin{equation}
    \|s_n\|\geq \delta_\varepsilon
    \qquad \text{for } n=0,\ldots,k.    \label{eq:ncvx_cat_lower_bound_radii3}
\end{equation}
\end{lemma}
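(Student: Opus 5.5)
The plan is to derive \eqref{eq:ncvx_cat_lower_bound_radii3} from \Cref{lem:cat_gradient_control} together with the radius bound, and to prove \eqref{eq:ncvx_cat_lower_bound_radii2} by induction over $n$. The key observation comes from \Cref{lem:cat_gradient_control}. Whenever $\|s_n\|<\Delta_n$ or $\rho_n^\vartheta<\eta_2$, we have $\varepsilon\leq\psi_n\leq\|\nabla f(x_{n,+})\|\leq K_\vartheta\|s_n\|^{1+\nu}$. This gives $\|s_n\|\geq d_\varepsilon$.

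\emph{Induction for \eqref{eq:ncvx_cat_lower_bound_radii2}.} The base case $n=0$ is immediate, since $\Delta_0\geq\delta_\varepsilon$. Assume $\Delta_n\geq\delta_\varepsilon$ for some $n\leq k$; we prove $\Delta_{n+1}\geq\delta_\varepsilon$ by distinguishing two cases.
\begin{itemize}
\item If $\rho_n^\vartheta\geq\eta_2$, the radius update gives $\Delta_{n+1}\geq\Delta_n\geq\delta_\varepsilon$.
\item If $\rho_n^\vartheta<\eta_2$, the observation above yields $\|s_n\|\geq d_\varepsilon$. Since $\|s_n\|\leq\Delta_n$, in every branch of the update we get $\Delta_{n+1}\geq\gamma_1\Delta_n\geq\gamma_1 d_\varepsilon\geq\delta_\varepsilon$.
\end{itemize}
This covers $n=0,\ldots,k+1$. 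Note that the inductive hypothesis is not even needed in the second case, because the lower bound comes directly from the step length.

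\emph{Step bound \eqref{eq:ncvx_cat_lower_bound_radii3}.} Fix $n\leq k$ and split into two cases.
\begin{itemize}
\item If $\|s_n\|=\Delta_n$, then $\|s_n\|\geq\delta_\varepsilon$ by \eqref{eq:ncvx_cat_lower_bound_radii2}.
\item If $\|s_n\|<\Delta_n$, \Cref{lem:cat_gradient_control} applies, so $\|s_n\|\geq d_\varepsilon\geq\gamma_1 d_\varepsilon\geq\delta_\varepsilon$, using $\gamma_1<1$.
\end{itemize}

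The only delicate point is the implication from \Cref{lem:cat_gradient_control}. It requires the hypothesis $\eta_2<1/(1+\vartheta)$, which is inherited, and it requires $\psi_n\leq\|\nabla f(x_{n,+})\|$, which holds by the definition of $\psi_n$. One must also check that $\|s_n\|>0$ so that the inverse power makes sense. This is automatic, because $\|s_n\|=0$ would force $g_n=0$, contradicting $\psi_n\geq\varepsilon>0$. Otherwise the argument is routine bookkeeping over the radius-update rule.
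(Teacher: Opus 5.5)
Your proof is correct and follows the paper's argument essentially verbatim. You use the same induction for the radius bound, with a case split on $\rho_n^\vartheta$ in which \Cref{lem:cat_gradient_control} gives $\|s_n\|\geq d_\varepsilon$ when $\rho_n^\vartheta<\eta_2$, and you obtain the step bound by combining that lemma with the radius bound. The only difference is cosmetic: you split the step bound into the cases $\|s_n\|=\Delta_n$ and $\|s_n\|<\Delta_n$, which absorbs the paper's separate case $\rho_n^\vartheta<\eta_2$, and your check that $\|s_n\|>0$ is harmless but unnecessary, since $\varepsilon\leq K_\vartheta\|s_n\|^{1+\nu}$ already gives $\|s_n\|\geq d_\varepsilon$ without any division.
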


\begin{proof}
Suppose that \eqref{eq:cat_radi_lower_bound} holds. We first prove the lower bound \eqref{eq:ncvx_cat_lower_bound_radii2}. By definition of $\delta_\varepsilon$, we have
$\delta_\varepsilon\leq \Delta_0$. Starting from this, we aim to show that, for every $n=0,\ldots,k$,
\begin{equation}
    \Delta_n\geq\delta_\varepsilon
    \quad\Longrightarrow\quad
    \Delta_{n+1}\geq\delta_\varepsilon.
     \label{eq:cat_radi_lower_bound_eq1}
\end{equation}
Together with \(\delta_\varepsilon\leq \Delta_0\) this will prove  \eqref{eq:ncvx_cat_lower_bound_radii2}. To show \eqref{eq:cat_radi_lower_bound_eq1}, fix $n\in\{0,\ldots,k\}$ and assume that $\Delta_n \geq \delta_{\varepsilon}$. We distinguish two cases. \\[0.2cm]
\emph{Case 1:} $\rho_n^\vartheta < \eta_2$. By assumption, we have $\varepsilon\leq\psi_n\leq\|\nabla f(x_n+s_n)\| $ and since $\rho_n^\vartheta<\eta_2$ \Cref{lem:cat_gradient_control} applies and gives
\begin{equation}
    \varepsilon
    \leq
    \|\nabla f(x_n+s_n)\|
    \leq
    K_\vartheta\|s_n\|^{1+\nu}. \label{eq:cat_radi_lower_bound_eq2}
\end{equation}
after rearranging this gives
\[
    \|s_n\|\geq d_\varepsilon.
\]
Using feasibility, $\|s_n\| \leq \Delta_n$, and the radius update mechanism, we obtain
\[
    \Delta_{n+1}
    \geq
    \gamma_1\Delta_n
    \geq
    \gamma_1\|s_n\|
    \geq
    \gamma_1 d_\varepsilon
    \geq
    \delta_\varepsilon.
\]
\emph{Case 2:} If $\rho_n^\vartheta \geq \eta_2$, then the radius update mechanism gives
\[
    \Delta_{n+1}\geq \Delta_n\geq\delta_\varepsilon.
\]
This proves \eqref{eq:cat_radi_lower_bound_eq1} and consequently also 
\[
    \Delta_n\geq\delta_\varepsilon
    \qquad n=0,\ldots,k+1.
\]
It remains to prove the lower bound \eqref{eq:ncvx_cat_lower_bound_radii3}. For that purpose fix $n\in\{0,\ldots,k\}$. We distinguish three cases.\\[0.2cm]
\emph{Case 1:} $\rho_n^\vartheta < \eta_2$. In this case the same argument as in  \eqref{eq:cat_radi_lower_bound_eq2} gives
\[
    \|s_n\|\geq d_\varepsilon \geq \delta_\varepsilon.
\]
\emph{Case 2:} $\rho_n^\vartheta\geq\eta_2$ and $\|s_n\|<\Delta_n$. In this case again \Cref{lem:cat_gradient_control} applies and the same argument as in \eqref{eq:cat_radi_lower_bound_eq2} gives
\[
    \varepsilon
    \leq \psi_n \leq 
    \|\nabla f(x_n+s_n)\|
    \leq
    K_\vartheta\|s_n\|^{1+\nu},
\]
and hence
\[
    \|s_n\|\geq d_\varepsilon\geq\delta_\varepsilon.
\]
\emph{Case 3:} $\rho_n^\vartheta\geq\eta_2$ and $\|s_n\|=\Delta_n$. In this case the radius lower bound \eqref{eq:ncvx_cat_lower_bound_radii2} gives
\[
    \|s_n\|=\Delta_n\geq\delta_\varepsilon.
\]
Thus combining all cases we obtain
\[
    \|s_n\|\geq\delta_\varepsilon
    \qquad n=0,\ldots,k,
\]
which proves the claim.
\end{proof}

Now we are in the position to prove the first-order iteration complexity bound for the UniCAT method.

\begin{theorem}[First-order universal complexity of
\Crefalgvariant{var:variant2}{}]
\label{thm:cat_nonconvex_complexity}
Let \Cref{ass:nonconvex_assumptions} hold, and suppose that the trust-region subproblem is solved exactly at every iteration. Moreover assume $\eta_2<\frac{1}{1+\vartheta}$ and define quantity
\begin{equation*}
    k_{\mathrm{CAT,ncvx}}(\varepsilon):= \min \{k \in \mathbb N: \min\{ \|\nabla f(x_k) \|, \|\nabla f(x_k + s_k) \|\} \leq \varepsilon \} .
\end{equation*}
Then the following complexity bound holds for $\varepsilon \leq (\Delta_0/\gamma_1)^{1+\nu} K_\vartheta$:
 \begin{align*}
   k_{\mathrm{CAT,ncvx}}(\varepsilon)
    \leq
    \left(
        1+\frac{\log(\gamma_3)}{|\log(\gamma_2)|}
    \right) \left(
    \frac{C_{\mathrm{CAT}}}{\varepsilon^{(2+\nu)/(1+\nu)}}\right)
    +
    \frac{1}{|\log(\gamma_2)|}
     \log_+\left( 
        \frac{\Delta_0 K_\vartheta^\alpha }{\gamma_1\varepsilon^\alpha}
    \right),
    \qquad C_{\mathrm{CAT}} := \frac{2F_0 K_\vartheta^\alpha}{\eta_1 \gamma_1 \vartheta}. 
\end{align*}
Hence we have $ k_{\mathrm{CAT,ncvx}}(\varepsilon) = \mathcal{O}(\varepsilon^{-(2+\nu)/(1+\nu)})$  as $\varepsilon \downarrow 0$.
\end{theorem}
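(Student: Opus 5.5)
Fix $\varepsilon \leq (\Delta_0/\gamma_1)^{1+\nu}K_\vartheta$. This condition means $\gamma_1 d_\varepsilon\leq \Delta_0$, so $\delta_\varepsilon=\gamma_1 d_\varepsilon=\gamma_1(\varepsilon/K_\vartheta)^\alpha$. We may assume $k:=k_{\mathrm{CAT,ncvx}}(\varepsilon)\geq 1$, since otherwise the bound is trivial. By minimality of $k$, we have $\psi_n>\varepsilon$ for $n=0,\ldots,k-1$. The plan is to apply \Cref{lem:cat_nonconvex_radius_lower_bound} with $k-1$ in place of $k$. This yields $\Delta_n\geq\delta_\varepsilon$ for $n=0,\ldots,k$ and $\|s_n\|\geq\delta_\varepsilon$ for $n=0,\ldots,k-1$.

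Next we bound the number of successful iterations. For every successful index $n\in\mathcal S_{k-1}$, the acceptance test gives
\[
f(x_n)-f(x_{n+1})\geq \eta_1\Bigl(p_n(s_n)+\tfrac{\vartheta}{2}\psi_n\|s_n\|\Bigr)\geq \frac{\eta_1\vartheta}{2}\,\varepsilon\,\delta_\varepsilon ,
\]
where we drop $p_n(s_n)\geq 0$, which holds by \Cref{lem:well-definedness}. Summing over $\mathcal S_{k-1}$ and using monotonicity of $f(x_n)$ (unsuccessful steps do not move the iterate), the telescoping sum is bounded by $f(x_0)-f_\star=F_0$. Hence
\[
|\mathcal S_{k-1}|\leq \frac{2F_0}{\eta_1\vartheta\,\varepsilon\,\delta_\varepsilon}=\frac{2F_0K_\vartheta^\alpha}{\eta_1\gamma_1\vartheta}\,\varepsilon^{-1-\alpha}=C_{\mathrm{CAT}}\,\varepsilon^{-(2+\nu)/(1+\nu)},
\]
since $1+\alpha=(2+\nu)/(1+\nu)$.

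Finally, apply \Cref{thm:relation_successful_unsuccessful} with this $k$ and $\Delta_\varepsilon:=\delta_\varepsilon\leq\Delta_k$. This gives
\[
k\leq\Bigl(1+\tfrac{\log\gamma_3}{|\log\gamma_2|}\Bigr)|\mathcal S_{k-1}|+\tfrac{1}{|\log\gamma_2|}\log_+\bigl(\Delta_0K_\vartheta^\alpha/(\gamma_1\varepsilon^\alpha)\bigr),
\]
which is exactly the claimed bound. The $\mathcal O$-statement follows because the logarithmic term is dominated by the polynomial term.

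The only delicate point is the indexing. The radius lemma needs $\psi_n\geq\varepsilon$ up to $k-1$, which is exactly what minimality of $k$ provides, and it supplies the bound $\Delta_k\geq\delta_\varepsilon$ that \Cref{thm:relation_successful_unsuccessful} requires. One should also check that $k<\infty$. If $k$ were infinite, the successful-count bound would hold for every iteration, so $|\mathcal S|$ would be finite. Then $\Delta_n\to 0$, contradicting $\Delta_n\geq\delta_\varepsilon$; alternatively, the displayed inequality for $k$ holds for all large indices, which is impossible. The main obstacle is therefore the gradient-control lemma, which is already available from the paper.
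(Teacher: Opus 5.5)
Your proof is correct and follows the paper's argument essentially step by step: you apply the radius lemma up to index $k-1$, telescope the per-successful-step decrease $\tfrac{\eta_1\vartheta}{2}\varepsilon\delta_\varepsilon$ against $F_0$, and then invoke \Cref{thm:relation_successful_unsuccessful} with $\Delta_\varepsilon=\delta_\varepsilon\le\Delta_k$. The only deviation is in showing $k_{\mathrm{CAT,ncvx}}(\varepsilon)<\infty$, and it is harmless: you derive $|\mathcal S|<\infty$ and get a contradiction from $\Delta_n\to0$ against the radius lower bound, whereas the paper uses the standing convention $|\mathcal S|=\infty$ and contradicts the boundedness of $f$ from below, so your version is slightly more self-contained.
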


\begin{proof}
We first show that $ k_{\mathrm{CAT,ncvx}}(\varepsilon)<\infty$ for arbitrary $\varepsilon>0$. Suppose, to the contrary, that this is not the case. Then $\psi_n>\varepsilon$ for all $n\in\mathbb N $. Consequently, \eqref{eq:ncvx_cat_lower_bound_radii3} yields $\|s_n \| \geq \delta_\varepsilon >0$ for all $n\in \mathbb N$, which in turn implies
\begin{align}
    f(x_{n}) - f(x_{n+1}) \overset{(a)}{\geq} \eta_1 \left( m_n(0) - m_n(s_n) + \frac{\vartheta}{2} \psi_n \|s_n\|\right)
    \overset{(b)}{\geq}
     \frac{ \eta_1 \vartheta \psi_n \|s_n\|}{2} \overset{\eqref{eq:ncvx_cat_lower_bound_radii3}}{\geq} \frac{\eta_1 \vartheta}{2}\varepsilon \delta_\varepsilon,\quad \text{$(n \in \mathcal{S}$)} 
     \label{eq:cat_nonconvex_eq0}.
\end{align}
Here we used in $(a)$ that $n \in \mathcal{S}$ and consequently $\rho_n^\vartheta \geq \eta_1$ and in $(b)$ that $s_n$ is a global minimizer of the model-problem.
By the standing convention in \Cref{par:standing_convention}, we have $|\mathcal{S}|=\infty$. Together with \Cref{lem:well-definedness}, this implies that
\begin{equation*}
f(x_n)-f(x_{n+1})\to 0
\qquad\text{as } n\to\infty,\quad n\in\mathcal{S}.    
\end{equation*}
This contradicts the strictly positive lower bound in \eqref{eq:cat_nonconvex_eq0}.
\\
We now prove the complexity bound. For that purpose, fix $0<\varepsilon \leq (\Delta_0/\gamma_1)^{1+\nu} K_\vartheta$ and note that if $k_{\mathrm{CAT,ncvx}}(\varepsilon)=0$ the theorem is trivially satisfied as the right hand side is positive for any $\varepsilon>0$. So we assume  $k_{\mathrm{CAT,ncvx}}(\varepsilon) \geq1$ and follow the arguments in \Cref{lem:successful_rate_abstract} and set
\begin{equation*}
K(\varepsilon):= k_{\mathrm{CAT,ncvx}}(\varepsilon),\qquad 
    \mathcal{S}(\varepsilon):= 
    \mathcal{S}_{k_{\mathrm{CAT,ncvx}}(\varepsilon)-1}. 
\end{equation*}
In order to apply \Cref{thm:relation_successful_unsuccessful}, we have to bound the cardinality of the latter set from above. For this purpose consider for $n \in \mathcal{S}(\varepsilon)$ the estimate as in \eqref{eq:cat_nonconvex_eq0}:
\begin{align}
    F_n - F_{n+1} = f(x_{n}) - f(x_{n+1}) \overset{}{\geq} \eta_1 \left( m_n(0) - m_n(s_n) + \frac{\vartheta}{2} \psi_n \|s_n\|\right)
    \overset{}{\geq}
     \frac{ \eta_1 \vartheta \psi_n \|s_n\|}{2} \overset{\eqref{eq:ncvx_cat_lower_bound_radii3}}{\geq} \frac{\eta_1 \vartheta}{2}\varepsilon \delta_\varepsilon,\quad 
     \label{eq:cat_nonconvex_eq1}
\end{align}
where we now in the last inequality used \Cref{lem:cat_nonconvex_radius_lower_bound} which applies as $\psi_n \geq \varepsilon$ for $n \in \mathcal{S}(\varepsilon)$. Summing the latter from $n=0$ to $n = K(\varepsilon) -1 $, we obtain
\begin{equation*}
F_{0} - F_{K(\varepsilon)} = 
    \sum_{n = 0}^{K(\varepsilon)-1}\left(F_{n} - F_{n+1} \right) \overset{(a)}{=} \sum_{n \in  \mathcal{S}(\varepsilon)}\left(F_{n} - F_{n+1} \right) \overset{\eqref{eq:cat_nonconvex_eq1}}{\geq}  \frac{\eta_1 \vartheta}{2}\varepsilon \delta_\varepsilon |\mathcal{S}(\varepsilon)|,\label{eq:cat_compl_sum}
\end{equation*}
where we used in $(a)$ that $F_n = F_{n+1}$ for $n \notin \mathcal{S}$. Rearranging the previous inequality yields
\begin{equation}
    |\mathcal{S}(\varepsilon)| \leq \frac{ 2 F_0}{\eta_1 \vartheta \varepsilon \delta_\varepsilon} = \frac{ 2 K_\vartheta^{1/(1+\nu)} F_0}{\eta_1 \vartheta \gamma_1 \varepsilon^{(2+\nu)/(1+\nu)}}, \label{eq:cat_upperbound_successful}
\end{equation}
where we used that $ \delta_\varepsilon = \gamma_1(\varepsilon/K_\vartheta)^{1/(1+\nu)}$ for $\varepsilon \leq  (\Delta_0/\gamma_1)^{1+\nu} K_\vartheta$. By minimality of $K(\varepsilon)$, $\psi_n > \varepsilon$ for every $n = 0, \ldots , K(\varepsilon)-1$. Hence \Cref{lem:cat_nonconvex_radius_lower_bound} applies and we deduce
\begin{equation*}
\delta_\varepsilon
\overset{\eqref{eq:ncvx_cat_lower_bound_radii2}}{\leq} \Delta_{K(\varepsilon)}.
\end{equation*}
Now the global complexity bound follows from \Cref{thm:relation_successful_unsuccessful} with $k = K(\varepsilon)$ and $\Delta_\varepsilon := \delta_\varepsilon$ under usage of the upper bound \eqref{eq:cat_upperbound_successful}:
 \begin{align*}
    K(\varepsilon)
    \leq
    \left(
        1+\frac{\log(\gamma_3)}{|\log(\gamma_2)|}
    \right) 
    \frac{C_{\mathrm{CAT}}}{\varepsilon^{(2+\nu)/(1+\nu)}}
    +
    \frac{1}{|\log(\gamma_2)|}
     \log_+\left( 
        \frac{\Delta_0 K_\vartheta^\alpha }{\gamma_1\varepsilon^\alpha}
    \right),\qquad 
    C_{\mathrm{CAT}} = \frac{2F_0K_\vartheta^\alpha}{\eta_1 \gamma_1 \vartheta},
\end{align*}
which proves the claimed complexity bound.

\end{proof}

\begin{remark}[Stationarity measure in the nonconvex analysis of \Crefalgvariant{var:variant2}{}]
In the nonconvex analysis of \Crefalgvariant{var:variant2}{}, stationarity is measured by
\[
    \psi_k
    =
    \min \{\| \nabla f(x_k) \| , \| \nabla f(x_{k}+s_k)\|\},
\]
rather than by \(\|g_k\|=\|\nabla f(x_k)\|\), as in the classical trust-region complexity result in \Cref{thm:tr-nonconvex-complexity}. This does not change
the interpretation of the result as a first-order stationarity guarantee. Indeed,
at iteration \(k\), both \(x_k\) and the trial point \(x_k+s_k\) are computed points. Thus \Cref{thm:cat_nonconvex_complexity} bounds the number of iterations until an \(\varepsilon\)-first-order stationary point has been computed. The only distinction from the bound in \Cref{thm:tr-nonconvex-complexity} is that the returned point need not be the
accepted iterate \(x_{k+1}\); it may also be the trial point \(x_{k}+s_k\). 
\end{remark}

\subsection{Universal complexity in the convex regime}\label{sec:cat_convex_complexity}
We now turn to the second main contribution of this article, namely the universal convex iteration complexity analysis of \Crefalgvariant{var:variant2}{} of \Cref{alg:abstract_tr_algorithm}. To apply \Cref{thm:main_abstract_convex_result}, we again need to verify that \eqref{eq:cvx_abstract_thm_lower_bound_radii} holds. In other words, we have to show that, also in this setting, sufficiently small radii are eventually very successful.

\begin{lemma}[Small radii lead to very successful steps]\label{lem:cat_small_radii_accepted}
Let \Cref{ass:nonconvex_assumptions} and \Cref{ass:convex_assumptions} hold. Consider $k \in \mathbb{N}$, a radius $\Delta > 0$ and the constants $\alpha, \beta>0$ from \eqref{eq:def_C_f_C_g_alpha}  and  \eqref{eq:defn_beta_C}. Also let again $s_k =s_k(\Delta)$ denote the corresponding global solution of the subproblem as in \eqref{eq:fcd_thm_subproblem}. Moreover fix   $\vartheta > 0$ such that $0<  \eta_2<\frac{1}{1+\vartheta/2}$. Define the constants
\begin{equation}
    \beta_{\mathrm{CAT}}
    :=
    \min\left\{
        \beta_{},
        \left(\frac{R_\vartheta}{2D}\right)^{\frac{1}{1+\nu}}
    \right\}, \qquad  R_\vartheta
    :=
    \frac{
        1-\eta_2(1+\vartheta/2)
    }{
        C_f+\eta_2\vartheta C_g/2
    }>0. \label{eq:def_beta_cat}
\end{equation}
Then if $\Delta \leq \beta_{\mathrm{CAT}} F_k^\alpha$,
the corresponding trial is automatically very successful, i.e.
\begin{equation*}
\rho_k^\vartheta := \frac{f(x_k)-f(x_k+s_k)}{m_k(0)-m_k(s_k) + \frac{\vartheta}{2}\min\{ \|\nabla f(x_k)\|,\|\nabla f(x_{k,+})\|\}\|s_k\|}
  \geq \eta_2.
\end{equation*}
\end{lemma}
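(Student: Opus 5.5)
We reduce $\rho_k^\vartheta\ge\eta_2$ to a single inequality. Write $p:=p_k(s_k)=m_k(0)-m_k(s_k)$ and $\psi_k=\min\{\|g_k\|,\|\nabla f(x_{k,+})\|\}$. By \eqref{eq:eq_model_approx_func}, the actual decrease satisfies $f(x_k)-f(x_{k,+})\ge p-C_f\|s_k\|^{2+\nu}$. It therefore suffices to show
\[
(1-\eta_2)p-\tfrac{\eta_2\vartheta}{2}\psi_k\|s_k\|\;\geq\;C_f\|s_k\|^{2+\nu}.
\]
The difficulty is the term $\psi_k\|s_k\|$. In general it is not dominated by $p$: for a convex model and $\|s_k\|=\Delta$ we only get $\|g_k\|\Delta\le 2p$ roughly. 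This would cost a factor $1-\eta_2(1+\vartheta)$ rather than $1-\eta_2(1+\vartheta/2)$, so we must use the structure of the exact step.

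\emph{Bounding $\psi_k\|s_k\|$ by $p$.} We use the multiplier system of \Cref{thm:global_solution_subproblem}: $(H_k+\lambda_k I)s_k=-g_k$ with $H_k\succeq0$ by convexity. As in the proof of \Cref{lem:cat_gradient_control},
\[
p=\tfrac12\langle H_ks_k,s_k\rangle+\lambda_k\|s_k\|^2 \ge \tfrac12\langle (H_k+\lambda_k I)s_k,s_k\rangle=-\tfrac12\langle g_k,s_k\rangle .
\]
Instead of $\|g_k\|$ we bound $\psi_k\le\|\nabla f(x_{k,+})\|$. By \eqref{eq:eq_model_approx_grad}, $\|\nabla f(x_{k,+})\|\le\lambda_k\|s_k\|+C_g\|s_k\|^{1+\nu}$. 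Hence
\[
\psi_k\|s_k\|\le\lambda_k\|s_k\|^2+C_g\|s_k\|^{2+\nu}\le p+C_g\|s_k\|^{2+\nu},
\]
using $\lambda_k\|s_k\|^2\le p$ (since $H_k\succeq0$). This is where convexity produces exactly the factor $\vartheta/2$. Substituting, the goal becomes
\[
\bigl(1-\eta_2(1+\vartheta/2)\bigr)p\;\ge\;\bigl(C_f+\eta_2\vartheta C_g/2\bigr)\|s_k\|^{2+\nu},
\]
that is, $R_\vartheta\,p\ge\|s_k\|^{2+\nu}$.

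\emph{Concluding with \eqref{eq:md_alpha}.} Since $\Delta\le\beta_{\mathrm{CAT}}F_k^\alpha\le\beta F_k^\alpha$, \Cref{lem:convex_model_decrease_alpha} gives $p\ge\frac{(1+\eta_2)F_k\Delta}{2D}\ge\frac{F_k\Delta}{2D}$. Also $\|s_k\|^{2+\nu}\le\Delta^{1+\nu}\Delta$. It therefore suffices that $\Delta^{1+\nu}\le R_\vartheta F_k/(2D)$. This follows from $\Delta\le(R_\vartheta/(2D))^{1/(1+\nu)}F_k^{\alpha}$ together with $\alpha(1+\nu)=1$.

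The main obstacle is the bound on $\psi_k\|s_k\|$. I expect the bound through $\|\nabla f(x_{k,+})\|$ and $\lambda_k$, rather than through $\|g_k\|$, to be the right choice, because it matches the constants $R_\vartheta$ exactly. The remaining step is a careful check that $\lambda_k\|s_k\|^2\le p$ holds with the correct factor; it does, because $p-\lambda_k\|s_k\|^2=\tfrac12\langle H_ks_k,s_k\rangle\ge0$.
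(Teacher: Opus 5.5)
Your proposal is correct and follows essentially the same route as the paper. The key steps match: you bound $\psi_k\|s_k\|\le p_k(s_k)+C_g\|s_k\|^{2+\nu}$ via the multiplier system and $\lambda_k\|s_k\|^2\le p_k(s_k)$ (from $H_k\succeq0$), and then use \eqref{eq:md_alpha} to get $\|s_k\|^{2+\nu}\le R_\vartheta\,p_k(s_k)$. The only cosmetic difference is that you clear the denominator of $\rho_k^\vartheta$, which is legitimate because $p_k(s_k)\ge F_k\Delta/(2D)>0$ (worth stating explicitly), so you do not need the paper's separate positivity check on the numerator's lower bound.
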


\begin{proof} The first part of the proof is almost identical to the one of \Cref{lem:cat_gradient_control}: Since $s_k$ solves the trust-region subproblem exactly, there again exists, by \Cref{thm:global_solution_subproblem} a Lagrange multiplier $\lambda_k\geq 0$ such that
\begin{equation}
    (H_k+\lambda_k I)s_k=-g_k,
    \qquad
    H_k+\lambda_k I\succeq0,
    \qquad
    \lambda_k(\Delta-\|s_k\|)=0. \label{eq:cat_lower_bound_cvx_1}
\end{equation}
Because $f$ is convex, $H_k=\nabla^2 f(x_k)\succeq0$ and we deduce 
\begin{equation*}
p_k(s_k) = -\langle g_k,s_k\rangle -\frac{1}{2}\langle H_ks_k,s_k\rangle 
 =
\frac{1}{2}\langle H_ks_k,s_k\rangle
+\lambda_k\|s_k\|^2 
\geq
\lambda_k \|s_k\|^2 .
\end{equation*}
Using $g_k+H_ks_k=-\lambda_k s_k$ and \eqref{eq:eq_model_approx_grad} in \Cref{lem:model_approximation} the triangle inequality yields
\begin{equation*}
    \|\nabla f(x_{k,+})\| \leq \| \nabla f(x_{k,+}) -\nabla f(x_{k})- H_ks_k\| + \|\nabla f(x_{k}) + H_ks_k\|
    \leq
    \lambda_k \|s_k\| + C_g \|s_k\|^{1+\nu}. \label{eq:thm_cat_accept_eq01}
\end{equation*}
Consequently, as $ \psi_k\leq \|\nabla f(x_{k,+})\|$
\begin{align*}
   \psi_k \|s_k\|
    \leq
    \|\nabla f(x_{k,+})\| \|s_k\|
    \leq
    \lambda_k \|s_k\|^2 + C_g \|s_k\|^{2+\nu}
    \leq
    p_k(s_k) + C_g \|s_k\|^{2+\nu}.
\end{align*}
Since $\|s_k\| \leq \Delta$ the latter gives 
\begin{equation}
    \psi_k \|s_k\|
    \leq
    p_k(s_k) + C_g\Delta^{2+\nu}. \label{eq:thm_cat_accept_eq1}
\end{equation}
Note that due to \eqref{eq:md_alpha}, the predicted decrease satisfies $p_k(s_k) > 0$ as $\Delta>0$ and $F_k>0$ by the standing convention in \Cref{par:standing_convention}. Using in addition \Cref{lem:model_approximation} we may compute
\begin{align}
    f(x_k)-f(x_k+s_k)
    = f(x_k) - m_k(s_k) + m_k(s_k) -f(x_k+s_k)
    &\geq
    p_k(s_k)-C_f \|s_k\|^{2+\nu} \notag \\
    &\geq
    p_k(s_k)-C_f\Delta^{2+\nu} \notag\\
    &= p_k(s_k)\left( 1 - \frac{C_f\Delta^{2+\nu}}{p_k(s_k)}
    \right). \label{eq:thm_cat_accept_eq2}
\end{align}
We will now bound $\Delta^{2+\nu}/{p_k(s_k)}$ from above. By assumption we have $ \Delta \leq \beta_{\mathrm{CAT}}F_k^\alpha$. Since $\beta_{\mathrm{CAT}}\leq \beta$ by construction, \eqref{eq:md_alpha} yields
\begin{equation*}
    p_k(s_k) =
    m_k(0)-m_k(s_k)
    \geq
    \frac{F_k\Delta}{2D}
\end{equation*}
and consequently
\begin{equation}
    \frac{\Delta^{2+\nu}}{p_k(s_k)}
    \leq
    2D\frac{\Delta^{1+\nu}}{F_k}
    \leq 
    2D\beta_{\mathrm{CAT}}^{1+\nu}
    \leq
    R_\vartheta, \label{eq:thm_cat_accept_eq21}
\end{equation}
where we used $\Delta \leq \beta_{\mathrm{CAT}}F_k^\alpha$ and $\alpha(1+\nu)=1$ and also the definition of $R_\vartheta$. We first conclude
\begin{equation}
    f(x_k)-f(x_k+s_k)
    \geq
    p_k(s_k)-C_f \Delta^{2+\nu} 
    \geq
     p_k(s_k)\left( 1 - {C_f R_\vartheta}
    \right) >0,  \label{eq:thm_cat_accept_eq22}
\end{equation}
where the last inequality follows from the definition of $R_\vartheta$. Additionally, the denominator of the ratio $\rho_k^\vartheta$, defined in \eqref{eq:rho_cat_def} satisfies
\begin{equation}
    (m_k(0)- m_k(s_k)) + \frac{\vartheta}{2}\psi_k \|s_k\|
    \overset{\eqref{eq:thm_cat_accept_eq1}}{\leq}
    \left(1+\frac{\vartheta}{2}\right)p_k(s_k)
    +
    \frac{\vartheta C_g}{2}\Delta^{2+\nu}. \label{eq:thm_cat_accept_eq3}
\end{equation}
Therefore, also using positivity of the second term in \eqref{eq:thm_cat_accept_eq22}, we obtain
\begin{equation*}
    \rho_k^\vartheta = \frac{f(x_k)-f(x_k+s_k)}{p_k(s_k) + \frac{\vartheta}{2}\psi_k \|s_k\|}
    \overset{\eqref{eq:thm_cat_accept_eq2},\eqref{eq:thm_cat_accept_eq3}}{\geq}
    \frac{
        p_k(s_k)-C_f\Delta^{2+\nu}
    }{
        \left(1+\frac{\vartheta}{2}\right)p_k(s_k)
        +
        \frac{\vartheta C_g}{2}\Delta^{2+\nu}
    }.
\end{equation*}
Dividing the numerator and denominator in the preceding lower bound by $p_k(s_k)>0$ gives
\begin{equation*}
\rho_k^\vartheta \geq \frac{
        1-C_f\frac{\Delta^{2+\nu}}{p_k(s_k)}
    }{
        1+\frac{\vartheta}{2}
        +
        \frac{\vartheta C_g}{2}
        \frac{\Delta^{2+\nu}}{p_k(s_k)} 
    } 
    \overset{\eqref{eq:thm_cat_accept_eq21}}{\geq}
     \frac{
        1-C_f R_\vartheta
    }{
        1+\frac{\vartheta}{2}
        +
        \frac{\vartheta C_g}{2}
        R_\vartheta
    } \geq \eta_2.  
\end{equation*}
Here we used the definition of $R_\vartheta$ in the last inequality. This proves the claim.
\end{proof}
As a simple corollary we now derive the global complexity bound for \Crefalgvariant{var:variant2}{} of \Cref{alg:abstract_tr_algorithm} in the convex regime.
\begin{theorem}[Universal complexity of \Crefalgvariant{var:variant2}{} under convexity]\label{thm:cat_convex_complexity} Let \Cref{ass:nonconvex_assumptions} and \Cref{ass:convex_assumptions} hold. Take $\varepsilon>0$ and let $(x_k)_k$ denote the iterates produced by  \Crefalgvariant{var:variant2}{} of \Cref{alg:abstract_tr_algorithm}.  Also fix   $\vartheta > 0$ and assume
\[
    0<  \eta_2<\frac{1}{1+\vartheta/2}.
\] Then we have 
\begin{equation*}
    k_{\mathrm{CAT},\mathrm{cvx}}(\varepsilon) := \min\{k \in \mathbb{N}: f(x_{k}) - f_\star \leq \varepsilon\} = \mathcal{O}(\varepsilon^{-\frac{1}{1+\nu}})\label{eq:def_k_eps_2},
\end{equation*}
as $\varepsilon \downarrow 0$.
\end{theorem}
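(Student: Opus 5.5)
The plan is to obtain the bound as a direct corollary of \Cref{thm:main_abstract_convex_result}, mirroring the proof of \Cref{thm:tr_convex_complexity}, so the task reduces to verifying its hypotheses (i) and (ii) for \Crefalgvariant{var:variant2}{}.

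Hypothesis (i) holds trivially with $\kappa_{\mathrm{cvx}}=1$. Indeed, \Crefalgvariant{var:variant2}{} solves the subproblem exactly, so $s_k=s_k^\star$ and \eqref{eq:cvx_abstract_thm_fractional_model_descent} holds with equality. For hypothesis (ii) we take $\beta_{\mathrm{cvx}}:=\beta_{\mathrm{CAT}}$ from \eqref{eq:def_beta_cat}, which satisfies $\beta_{\mathrm{CAT}}\le\beta$ by construction. Applying \Cref{lem:cat_small_radii_accepted} with $\Delta=\Delta_k$ shows that $\Delta_k\le\beta_{\mathrm{CAT}}F_k^\alpha$ implies $\rho_k^\vartheta\ge\eta_2$. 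This uses the assumption $0<\eta_2<1/(1+\vartheta/2)$, which is exactly the hypothesis of that lemma. Hence \eqref{eq:cvx_abstract_thm_lower_bound_radii} holds for all $k$.

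A small subtlety is the standing convention that $\|\nabla f(x_k)\|>0$ and $F_k>0$ for all $k$, together with $|\mathcal S|=\infty$. This is justified by \Cref{thm:infinite_successful_set}(ii), since $\eta_1\le\eta_2<1/(1+\vartheta/2)$. Alternatively, the remark after the proof of \Cref{thm:main_abstract_convex_result} shows that condition (ii) alone forces $|\mathcal S|=\infty$ when the method does not terminate. If the method does terminate finitely at a stationary point, convexity makes that point a global minimizer, so $k_{\mathrm{CAT},\mathrm{cvx}}(\varepsilon)$ is bounded trivially.

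\Cref{thm:main_abstract_convex_result} then yields the explicit bound with $\mu_{\mathrm{cvx}}=\min\{\gamma_1\beta_{\mathrm{CAT}},\Delta_0/F_0^\alpha\}$ and $C_{\mathrm{cvx}}=\eta_1(1+\eta_2)\mu_{\mathrm{cvx}}/(2D)$. This gives $k_{\mathrm{CAT},\mathrm{cvx}}(\varepsilon)=\mathcal O(\varepsilon^{-1/(1+\nu)})$, and no algorithmic parameter depends on $\nu$ or $L_\nu$. I expect no real obstacle: the only nontrivial ingredient is the acceptance lemma, which is already established. The one point to check is the successful-step decrease inside the abstract proof, namely $F_n-F_{n+1}\ge\eta_1 p_n(s_n)$. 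It follows from $\rho_n^\vartheta\ge\eta_1$ because the extra term $\frac{\vartheta}{2}\psi_n\|s_n\|$ in the denominator is nonnegative. The abstract argument therefore goes through unchanged for $\vartheta>0$.
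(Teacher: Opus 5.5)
Your proposal is correct and follows the paper's own proof: apply the abstract convex complexity theorem with $\kappa_{\mathrm{cvx}}=1$ (exact solves) and $\beta_{\mathrm{cvx}}=\beta_{\mathrm{CAT}}$, with hypothesis (ii) supplied by the small-radii acceptance lemma under $0<\eta_2<1/(1+\vartheta/2)$. The two points the paper leaves implicit are also correct as you state them: the standing convention follows from the infinite-successful-set lemma since $\eta_1\le\eta_2$, and the decrease $F_n-F_{n+1}\ge\eta_1 p_n(s_n)$ holds because the $\vartheta$-term is nonnegative.
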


\begin{proof}
For the proof it suffices to apply \Cref{thm:main_abstract_convex_result} with 
\[
\kappa_{\mathrm{cvx}} = 1, \quad
\beta_{\mathrm{cvx}} = \beta_{\mathrm{CAT}},
 \]
where $ \beta_{\mathrm{CAT}} \leq \beta$ is defined in \eqref{eq:def_beta_cat}. Then taking into account \Cref{lem:cat_small_radii_accepted}, the statement follows immediately.
\end{proof}

\subsection{Fast local convergence with universal rates}

As for \Crefalgvariant{var:variant1}{}, we finally study the local behavior of \Crefalgvariant{var:variant2}{} near a nondegenerate local minimizer. The proof follows the same strategy as in
\Cref{thm:tr_local_fast_convergence}.
\begin{theorem}[Local fast convergence, \Crefalgvariant{var:variant2}{}]\label{thm:tr_local_fast_convergence_cat}
Let \Cref{ass:nonconvex_assumptions} hold. Let $x_\star$ be a local minimizer of $f$, and suppose that $\nabla^2 f(x_\star)\succ0$ and $x_k\to x_\star$.  Moreover assume that
\[
\eta_2 <\frac{1}{1+\vartheta/2}.
\]
Then there exist an index $K_0 \in \mathbb{N}$ and a lower bound $\underline\Delta>0$ such that 
\begin{itemize}
    \item[(i)] for all $k\geq K_0$ the step is very successful, i.e. $\rho_k^\vartheta \geq \eta_2$.
    \item[(ii)] for all $k \geq K_0$, we have $\Delta_k\geq \underline\Delta$.
    \item[(iii)] for all $k \geq K_0$ the trust-region constraint is inactive in the sense that $\|s_k \|  < \Delta_k$ and we have 
    \begin{align*}
        s_k=-\nabla^2 f(x_k)^{-1}\nabla f(x_k), 
    \end{align*}
    which is the classical Newton step.
     \item[(iv)] If $\nu =0$,  \Crefalgvariant{var:variant2}{}  eventually converges $Q$-superlinearly, i.e.
\begin{equation*}
    \|x_{k+1}-x_\star\|=o(\|x_k-x_\star\|).
\end{equation*}
\item[(v)]  If \(\nu\in(0,1]\), then there exist a constant $C>0$ such that, for all sufficiently large $k$
\begin{equation*}
    \|x_{k+1}-x_\star\|
    \leq
    C\|x_k-x_\star\|^{1+\nu}.
\end{equation*}
\end{itemize}
\end{theorem}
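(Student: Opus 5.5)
We follow the strategy of \Cref{thm:tr_local_fast_convergence}, simplified by exact subproblem solves ($s_k=s_k^\star$). First, by continuity of $\nabla^2 f$ and $x_k\to x_\star$, fix $r>0$, constants $M\geq\mu>0$ and $K_0$ such that \eqref{eq:thm_local_eq_TR0} holds for $k\geq K_0$. The KKT system of \Cref{thm:global_solution_subproblem} then gives the bound $\|s_k\|\leq \|g_k\|/\mu$ exactly as in \eqref{eq:thm_local_eqTR2}. Hence $\|s_k\|\to0$, since $g_k\to\nabla f(x_\star)=0$. After enlarging $K_0$, we may assume $x_k+ts_k\in\overline{B_r(x_\star)}$ for all $t\in[0,1]$.

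\emph{Lower bound on the predicted decrease.} Taking the inner product of the first KKT equation with $s_k$ and using $H_k\succeq\mu I$ gives
\[
p_k(s_k)=\tfrac12\langle H_ks_k,s_k\rangle+\lambda_k\|s_k\|^2\geq \tfrac{\mu}{2}\|s_k\|^2 .
\]
This avoids the Cauchy-point detour used in the inexact case.

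\emph{Eventual very successful steps.} This is the step that differs from \Cref{thm:tr_local_fast_convergence}, and I expect it to be the main obstacle, because the extra term $\frac{\vartheta}{2}\psi_k\|s_k\|$ in the denominator of \eqref{eq:rho_cat_def} must be controlled. By the uniform continuity argument already used in \Cref{thm:tr_local_fast_convergence}, we have $|f(x_k+s_k)-m_k(s_k)|=o(\|s_k\|^2)$. Analogously, the integral form of the gradient remainder gives $\|\nabla f(x_{k,+})-g_k-H_ks_k\|=o(\|s_k\|)$. Since $g_k+H_ks_k=-\lambda_ks_k$, we get
\[
\psi_k\|s_k\|\leq\|\nabla f(x_{k,+})\|\|s_k\|\leq \lambda_k\|s_k\|^2+o(\|s_k\|^2).
\]
Moreover, $\lambda_k\|s_k\|^2\leq p_k(s_k)-\tfrac{\mu}{2}\|s_k\|^2\leq p_k(s_k)$. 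Therefore
\[
\rho_k^\vartheta\geq\frac{p_k(s_k)-o(\|s_k\|^2)}{(1+\vartheta/2)\,p_k(s_k)+o(\|s_k\|^2)} .
\]
Because $p_k(s_k)\geq\frac{\mu}{2}\|s_k\|^2$, the right-hand side satisfies $\liminf_k\rho_k^\vartheta\geq 1/(1+\vartheta/2)>\eta_2$. The $o(\cdot)$ terms are used with respect to $\|s_k\|\to0$, which is legitimate since $s_k\neq0$ whenever $g_k\neq0$. This proves (i) after enlarging $K_0$.

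\emph{Radius bound, Newton step, and rates.} From (i) and the update rule, $\Delta_k\geq\underline\Delta:=\Delta_{K_0}$ for $k\geq K_0$, which is (ii). Since $\|s_k\|\leq\|g_k\|/\mu\to0$, after enlarging $K_0$ we get $\|s_k\|<\underline\Delta\leq\Delta_k$. Complementarity then forces $\lambda_k=0$, so $s_k=-H_k^{-1}g_k$, which is (iii). By (i), every such step is accepted, so $x_{k+1}=x_k+s_k$ is exactly the Newton iterate. The classical Newton analysis cited in \Cref{thm:tr_local_fast_convergence} (\cite[Exercise~5.7.1]{kelley1995iterative}) then yields Q-superlinear convergence in the $C^2$ case, i.e.\ (iv). For (v), use \eqref{eq:eq_model_approx_grad} together with $\|H_k^{-1}\|\leq1/\mu$:
\[
\|x_{k+1}-x_\star\|=\bigl\|H_k^{-1}\bigl(\nabla f(x_\star)-g_k-H_k(x_\star-x_k)\bigr)\bigr\|\leq \frac{C_g}{\mu}\|x_k-x_\star\|^{1+\nu}.
\]
Unlike \Cref{thm:tr_local_fast_convergence}, no inexactness term appears here.
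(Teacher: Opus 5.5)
Your proposal is correct and follows essentially the same route as the paper. The shared steps are: the KKT bounds $p_k(s_k)\geq \frac{\mu}{2}\|s_k\|^2+\lambda_k\|s_k\|^2$ and $\|s_k\|\leq\|g_k\|/\mu$; controlling $\psi_k\|s_k\|$ through $g_k+H_ks_k=-\lambda_k s_k$ and the $o(\|s_k\|)$ gradient remainder to get $\liminf_k\rho_k^\vartheta\geq 1/(1+\vartheta/2)$; and then the radius bound, the inactive constraint and the Newton rates. The only differences are minor. You make (v) explicit with the constant $C_g/\mu$ via \eqref{eq:eq_model_approx_grad} rather than deferring to the standard Newton analysis. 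You also leave implicit, where the paper states it, that $K_0$ is enlarged so the numerator lower bound $p_k(s_k)-o(\|s_k\|^2)$ is positive before you pass to the ratio estimate.
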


\begin{proof}
   Since $\nabla^2 f(x_\star)\succ0$ and $x_k\to x_\star$, there exist by continuity of $\nabla^2f$ a $\mu>0$ and $K_0 \geq 0$ such that
\begin{equation}
    \nabla^2 f(x_k)\succeq \mu I
    \qquad \forall k\ge K_0 . \label{eq:thm_local_eq0}
\end{equation}
Recall that for \Crefalgvariant{var:variant2}{},  $s_k$ is the exact solution of the trust-region subproblem, which is unique because of \eqref{eq:thm_local_eq0}. The optimality conditions for the trust-region subproblem in \Cref{thm:global_solution_subproblem} imply that there exists a Lagrange multiplier $\lambda_k\geq 0 $
such that
\[
    (H_k+\lambda_k I)s_k=-g_k,
    \qquad
    H_k+\lambda_k I\succeq0,
    \qquad
    \lambda_k(\Delta_k-\|s_k\|)=0.
\]
In particular, $g_k+H_ks_k=-\lambda_k s_k$, which yields for $k\geq K_0$ the following estimate for the predicted reduction 
\begin{align}
    p_k(s_k)
    =
    -\langle g_k,s_k\rangle
    -\frac12\langle H_ks_k,s_k\rangle             
    =
    \frac12\langle H_ks_k,s_k\rangle
    +
    \lambda_k\|s_k\|^2                              
    \geq
    \frac{\mu}{2}\|s_k\|^2+\lambda_k \|s_k\|^2 . \label{eq:cat_local_1}
\end{align}
where we set again $p_k(s_k) := m_k(0)-m_k(s_k)$ for the predicted reduction. 
Also,
\begin{equation}
    \|s_k\|
    \le
    \|(H_k+\lambda_k I)^{-1}\|\,\|g_k\|
    \le
    \frac{1}{\mu}\|g_k\| \to 0, \qquad (k \to \infty )  \label{eq:cat_local_11}
\end{equation}
since \(x_k\to x_\star\) and  \(g_k\to0\). Using \(g_k+H_ks_k=-\lambda_k s_k\), we get as in \eqref{eq:thm_cat_accept_eq1}
\begin{equation*}
     \psi_k \|s_k\|
    \leq
    p_k(s_k) + \| \nabla f(x_{k,+}) -\nabla f(x_{k})- H_ks_k\|\|s_k\|.
\end{equation*}
Furthermore as $f \in C^2$ and $p_k(s_k) \geq \frac{\mu}{2}\|s_k\|^2$ we may enlarge $K_0$ such that additionally 
\begin{equation*}
    p_k(s_k)+ (m_k(s_k) - f(x_k + s_k)) >0 \qquad \text{for $k\geq K_0$}.
\end{equation*}
Consequently we obtain for those $k$
\begin{equation*}
    \rho_k^\vartheta = \frac{f(x_k)-f(x_k+s_k)}{p_k(s_k) + \frac{\vartheta}{2}\psi_k \|s_k\|}
   {\geq}
    \frac{
        p_k(s_k)+ (m_k(s_k) - f(x_k + s_k))
    }{
        \left(1+\frac{\vartheta}{2}\right)p_k(s_k)
        +
        \frac{\vartheta }{2}  \| \nabla f(x_{k,+}) -\nabla f(x_{k})- H_ks_k\|\|s_k\|
    }.
\end{equation*}
If now $p_k(s_k)=0$ then due to \eqref{eq:cat_local_1} also $s_k=0$ and consequently $\|s_k\| <\Delta_k$ and by the complementarity condition, $\lambda_k=0$. This in turn yields $0= H_ks_k =-g_k$, so $x_k$ stationary and this case was excluded by the standing convention in  \Cref{par:standing_convention}. Hence we assume $p_k(s_k) > 0$ and divide in the above inequality numerator and denominator by $p_k(s_k)$ and obtain
\begin{equation}
    \rho_k^\vartheta \geq \frac{
        1+\frac{(m_k(s_k) - f(x_k + s_k))}{p_k(s_k)}
    }{
        \left(1+\frac{\vartheta}{2}\right)
        +
        \frac{\vartheta }{2}  \frac{\| \nabla f(x_{k,+}) -\nabla f(x_{k})- H_ks_k\|}{p_k(s_k)}\|s_k\| \label{eq:cat_local_2}
    }
\end{equation}
As in the proof of \Cref{thm:tr_local_fast_convergence},  since $x_k\to x_\star$ and $\|s_k\|\to 0$, after possibly
enlarging $K_0$, we may fix an $r>0$ and assume that 
\[
    x_k+t s_k\in\overline{B_r(x_\star)}
    \qquad
    \text{for all }t\in[0,1]\text{ and }k\geq K_0.
\]
As $H$ is finite-dimensional and
$\nabla^2f$ is continuous, it is uniformly continuous on
$\overline {B_r(x_\star)}$. Consequently, standard estimates using the integral Taylor formulas yield
\begin{equation*}
m_k(s_k)-f(x_k+s_k)=o(\|s_k\|^2),
\qquad
\|\nabla f(x_{k,+})-\nabla f(x_k)-H_ks_k\|
=o(\|s_k\|).
\end{equation*}
as $\|s_k\| \to 0$. Using now \eqref{eq:cat_local_1}, \eqref{eq:cat_local_11}, it is easy to deduce from \eqref{eq:cat_local_2} and the assumptions of the theorem that
\[
    \liminf_{k\to\infty}\rho_k^\vartheta
    \geq
    \frac{1}{1+\vartheta/2}.
\]
Because $\eta_2<\frac{1}{1+\vartheta/2}$, after possibly enlarging $K_0$ we may assume that
\[
    \rho_k^\vartheta\geq \eta_2
    \qquad
    \forall k\ge K_0 .
\]
Consequently by the radius update rule $\Delta_{k} \geq \underline{\Delta} := \Delta_{K_0}$ for all $k \geq K_0$. By \eqref{eq:cat_local_11} we have $\|s_k\| \to 0$, hence, by possibly increasing $K_0$ again, we may also assume that $\|s_k\| < \Delta_k$ for all $k \geq K_0$, which implies using the complementarity condition, that $\lambda_k=0$.  Hence, for those $k \geq K_0$  we have 
\begin{equation*}
    s_k =  - H_k^{-1}g_k,
\end{equation*}
which is the standard Newton step. Here the inverse exists by assumption. The rest of the proof of $(iv)$--$(v)$ follows standard arguments for Newton methods and can be deduced almost identically as in the proof of \Cref{thm:tr_local_fast_convergence} for $\bar \tau = 0$. We do not repeat the argument here.
\end{proof}
\begin{remark}
As in the previous section, the statement in $(iv)$ remains valid for merely $C^2$--functions $f$ and does not rely on the Hölder condition with exponent $\nu=0$. The proof remains the same.
\end{remark}

\section{Conclusion and outlook}

In this paper, we have shown that a substantial form of universality is already inherent in simple trust-region mechanisms. A variant of the basic inexact trust-region method, equipped with the classical acceptance ratio and radius-update rule, attains the universal convex complexity bound under H\"older continuity of the Hessian. A simple modification of the acceptance ratio, in the spirit of Hamad and Hinder~\cite{hamad2024simple}, further yields the optimal H\"older-adaptive first-order complexity in the nonconvex regime, while preserving both the universal convex complexity and the local fast convergence of Newton's method. Thus, the same quadratic trust-region framework adapts to the nonconvex, convex, and locally strongly convex regimes without requiring knowledge of the H\"older exponent or the corresponding regularity constant. A central ingredient of the convex analysis is the function-gap model-decrease estimate developed in this paper, which appears to have remained largely hidden in the classical trust-region analysis.

Several directions for future research arise from the present results. First, it would be natural to combine the H\"older-continuous analysis developed here with the inexactness framework of~\cite{hamad2024simple}. This would remove the exact-subproblem assumption from the analysis of the UniCAT variant and substantially improve its practical relevance. Similarly, the basic inexact trust-region method relies on an abstract relative model-decrease condition. Although this condition can be satisfied by nearly exact subproblem solvers, such as Mor\'e--Sorensen-type methods, these methods are often not well suited to large-scale or high-dimensional problems. It would therefore be valuable to determine whether the present universal convex analysis can be extended to more scalable subproblem procedures, including Krylov-subspace and truncated conjugate-gradient methods under weaker and more readily verifiable decrease conditions.

A further direction is to investigate whether the same universal mechanism persists in broader trust-region frameworks. Natural candidates include stochastic trust-region methods~\cite{blanchet2019convergence,jin2024high}, proximal trust-region methods~\cite{baraldi2023proximal}, and multilevel variants~\cite{gratton2008recursive,baraldi2025multilevel}. Such extensions would be particularly relevant for large-scale, high-dimensional, and nonsmooth optimization problems. 

Finally, the present work is theoretical. A computational study of the proposed variants, particularly in combination with scalable inexact subproblem solvers, would help assess whether the universal behavior identified by the analysis is also reflected in practice. Relevant benchmark classes include problems arising in machine learning, inverse problems, and engineering design.

\section*{Acknowledgements}

The author is grateful to Amal Alphonse for proofreading substantial portions of the manuscript, and to Robert Baraldi and Coralia Cartis for their valuable literature suggestions.
\printbibliography

\end{document}